
\documentclass[11pt,oneside,reqno]{amsart}  

\usepackage[utf8]{inputenc}            
\usepackage[T1]{fontenc}
\usepackage{microtype}

\usepackage[english]{babel}

\usepackage{url}
\usepackage{xcolor}                   
\usepackage{graphics,graphicx,xypic}
\usepackage{dsfont,txfonts}
\usepackage{mathrsfs}
\usepackage{textcomp}
\usepackage{enumerate} 
\usepackage{hyperref}
\usepackage[toc,page]{appendix}         


\usepackage[a4paper,scale={0.72,0.74},marginratio={1:1},footskip=7mm,headsep=10mm]{geometry}

\numberwithin{equation}{section}    

\frenchspacing



\theoremstyle{plain}
\newtheorem{theorem}{Theorem}[section]

\theoremstyle{plain}
\newtheorem{assumption}[theorem]{Assumption}

\theoremstyle{plain}
\newtheorem{proposition}[theorem]{Proposition}
\newtheorem{corollary}[theorem]{Corollary}
\newtheorem{lemma}[theorem]{Lemma}

\theoremstyle{definition}
\newtheorem{definition}[theorem]{Definition}
\newtheorem{remark}[theorem]{Remark}

\newcommand{\X}{\mathrm{X}}
\newcommand{\Xden}{\mathrm{X}^{(\mathtt{fin})}}
\newcommand{\Xdi}{\mathrm{X}^{(N)}}
\DeclareMathOperator*{\argmax}{arg\,max}

\newcommand{\tolaw}{\overset{(\mathrm{d})}{\to}}

\newcommand{\Rnk}{\rho_{N}^{(k)}}
\newcommand{\hb}{\hat{\beta}}

\newcommand{\sD}{\hat{\sigma}_{N}}
\newcommand{\sC}{\hat{\sigma}_{\infty}}
\newcommand{\sDtr}{\hat{\sigma}_{ N}^{(k)}}
\newcommand{\sCtr}{\hat{\sigma}_{\infty}^{(k)}}

\renewcommand{\Uc}[1]{U_{{#1}, \infty}}
\newcommand{\Udtr}[1]{U_{{#1}, N}^{(k)}}
\newcommand{\Uctr}[1]{U_{{#1},\infty}^{(k)}}

\newcommand{\Id}[1]{\hat{I}_{{#1}, N}}
\newcommand{\Ic}[1]{\hat{I}_{{#1}, \infty}}
\newcommand{\Idtr}[1]{\hat{I}_{{#1}, N}^{(k)}}
\newcommand{\Ictr}[1]{\hat{I}_{{#1},\infty}^{(k)}}

\newcommand{\Idp}[1]{{I}_{{#1}, N}}

\newcommand{\hbc}{\hat{\beta}_{\texttt{c}}}

\newcommand{\ud}[1]{\hat{u}_{{#1}, N}}
\newcommand{\uc}[1]{\hat{u}_{{#1}, \infty}}
\newcommand{\udtr}[1]{\hat{u}_{{#1}, N}^{(k)}}
\newcommand{\uctr}[1]{\hat{u}_{{#1},\infty}^{(k)}}

\newcommand{\Gib}{{\hat{\mathrm{P}}}_{{\hb_N},N}}

\newcommand{\ind}{\mathds{1}}

\newcommand\blfootnote[1]{%
\begin{minipage}[b]{1 \textwidth}
{\small {#1}}
\end{minipage}
}

\title{Pinning Model with Heavy-Tailed disorder}

\author{Niccolò Torri}

\address{Universit\'e de Lyon, Institut Camille Jordan,
  Universit\'e Claude Bernard - Lyon 1, 43 bd du 11 novembre 1918, 69622 Villeurbanne cedex, France
 }

\email{torri@math.univ-lyon1.fr}


\keywords{
 Pinning Model; Directed Polymers; Heavy Tails; Localization.}

\subjclass[2010]{60G57; 60G55; 82B44; 82D60} 

\thanks{
This work was supported by the \emph{Programme Avenir Lyon Saint-Etienne de l'Université de Lyon} (ANR-11-IDEX-0007), within the program \emph{Investissements d'Avenir} operated by the French National Research Agency (ANR)\\
\vskip1pt
\blfootnote{
This is an electronic reprint of the original article published by 
\emph{Stochastic Processes and their Applications} (2015), \url{http://dx.doi.org/10.1016/j.spa.2015.09.010}
. 
}}

\begin{document}

\begin{abstract}
We study the pinning model, which describes the behavior of a Markov chain interacting with a distinguished state. The interaction depends on an external source of randomness, called disorder. Inspired by \cite{AL11} and \cite{HM07}, we consider the case when the disorder is heavy-tailed, while the return times of the Markov chain are stretched-exponential. We prove that the set of times at which the Markov chain visits the distinguished state, suitably rescaled, has a limit in distribution. Moreover there exists a random threshold below which this limit is trivial. Finally we complete a result of \cite{AL11} on the directed polymer in a random environment.
\end{abstract}

\maketitle

\section{Set-up and Results}
The pinning model can be defined as a random perturbation of a random walk or, more generally, of a Markov chain called $S$. In this model we modify the law of the Markov chain by weighing randomly the probability of a given trajectory up to time $N$. 
Each time $S$ touches a distinguished state, called $0$, before $N$, say at time $n$, we give a reward or a penalty to this contact by assigning an exponential weight $\exp(\beta\omega_n-h)$, where $\beta\in\mathbb{R}_+:=(0,\infty)$, $h\in\mathbb{R}$ and $(\omega=(\omega_n)_{n\in\mathbb{N}},\mathbb{P})$ is an independent random sequence called disorder.
 The precise definition of the model is given below.

\smallskip

In this model we perturb $S$ only when it takes value $0$, therefore it is convenient to work 
 with its zero level set. For this purpose we consider a renewal process $(\tau=(\tau_n)_{n\in\mathbb{N}},\mathrm{P})$, that is an  $\mathbb{N}_0$-valued random process such that $\tau_0=0$ and  $(\tau_j-\tau_{j-1})_{j\in\mathbb{N}}$ is an  i.i.d. sequence. This type of random process can be thought of as a random subset of $\mathbb{N}_0$, in particular if $S_0=0$, then by setting $\tau_0=0$ and $\tau_j=\inf\{k>\tau_{j-1} : S_k=0\}$, for $j>0$, we recover the zero level set of the Markov chain $S$. From this point of view the notation $\{n\in\tau\}$ means that there exists $j\in\mathbb{N}$ such that $\tau_j=n$. We refer to \cite{A03,GB07} for more details about the theory of the renewal processes.
 
 \smallskip

In the literature, e.g. \cite{FdH07,GB10,GB07}, typically the law of $\tau_1$, the inter-arrival law of the renewal process, has a polynomial tail and the disorder has finite exponential moments. In our paper we study the case in which the disorder has polynomial tails, in analogy with the articles 
\cite{AL11} and 
\cite{HM07}. To get interesting results we work with a renewal process where the law of $\tau_1$ is stretched-exponential (cf. Assumptions \ref{ASSrenP}). Possible generalizations will be discussed in Section \ref{SecPersp}.

\subsection{The Pinning Model}
In this paper we want to understand the behavior of $\tau/N\cap[0,1]=\{\tau_j/N : \tau_j \leq N\}$, the rescaled renewal process up to time $N$,when $N$ gets large.

\medskip

  We denote by $\mathrm{P}_N$ the law of $\tau/N\cap[0,1]$, which turns out to be a probability measure on the space of all subsets of $\{0,1/N,\cdots,1\}$.
On this space, for $\beta,h\in\mathbb{R}$ we define the \emph{pinning model} $\mathrm{P}_{\beta,h,N}^{{\omega}}$ as a probability measure defined by the following Radon-Nikodym derivative
\begin{equation}\label{eqIntr1}
\frac{\textrm{d}\mathrm{P}_{\beta,h,N}^{{\omega}}}{\textrm{d}\mathrm{P}_N}(I)=
\frac{1}{\mathrm{Z}_{\beta,h,N}^{\omega}}\exp\left(\displaystyle\sum_{n=1}^{N-1} (\beta\omega_n-h)\ind(n/N\in I)\right)\ind(1\in I),
\end{equation}
where $\mathrm{Z}_{\beta,h,N}^{\omega}$ is a normalization constant, called partition function, that makes $\mathrm{P}_{\beta,h,N}^{\omega}$ a probability. 
Let us stress that a realization of $\tau/N\cap [0,1]$ has non-zero probability only if its last point is equal to $1$. This is due to the presence of the term $\ind(1\in I)$ in \eqref{eqIntr1}.
In such a way the pinning model is a \emph{random} probability measure on the space $\X$  of all closed subsets of $[0,1]$ which contain both $0$ and $1$ 
\begin{equation}
\X=\{I\subset [0,1]: I \textrm{ is closed and }0,1\in I\}
\end{equation}
with support given by $\Xdi$, the set of all subsets of $\{0,1/N,\cdots,1\}$ which contains both $0$ and $1$. 

\medskip

The pinning model $\mathrm{P}_{\beta,h,N}^{{\omega}}$ is a \emph{random} probability measure, in the sense that it depends on a parameter $\omega$, called disorder, which is a quenched realization of a random sequence. Therefore in the pinning model we have two (independent) sources of randomness: the renewal process $(\tau,\mathrm{P})$ and the disorder $(\omega,\mathbb{P})$. To complete the definition we thus need to specify our assumptions about the disorder and the renewal process.

\begin{assumption}\label{ASSdis} 
We assume that the disorder $\omega$ is an i.i.d. sequence of random variables whose tail is regularly varying with index $\alpha\in (0,1)$, namely
\begin{equation}\label{eqIntr2}
\mathbb{P}(\omega_1>t)\sim L_0(t)t^{-\alpha}, \quad t\to \infty,
\end{equation}
where $\alpha\in (0,1)$ and $L_0(\cdot)$ is a slowly varying function, cf. \cite{BGT89}. 
Moreover we assume that the law of $\omega_1$ has no atom and it is supported in $(0,\infty)$, i.e. $\omega_1$ is a positive random variable. The reference example to consider is given by the Pareto Distribution.
\end{assumption}

\begin{assumption}\label{ASSrenP} 
Given a renewal process, we denote the law of its first point $\tau_1$ by $K(n):=\mathrm{P}(\tau_1=n)$, which characterizes completely the process. 
Throughout the paper we consider a non-terminating renewal process $\tau$, i.e., $\sum_{n\in\mathbb{N}}K(n)=1$, which satisfies the two following assumptions
\begin{enumerate}
 \item \label{A1} Subexponential, cf. 
 \ref{Aa1}: 
 \begin{align}\nonumber
 \forall\, k>0,\, \lim_{n\to\infty}K(n+k)/K(n)=1 \quad \text{and}\quad \lim_{n\to\infty}K^{*(2)}(n)/K(n)=2,
 \end{align}
 
 \item \label{A2} Stretched-exponential
 \begin{align}\nonumber
\exists\, \gamma\in (0,1), \, \mathtt{c}>0\,:\quad
\lim_{n\to\infty} \log K(n)/ n^{\gamma} = -\mathtt{c} 
\end{align}
\end{enumerate}
\end{assumption}

\begin{remark}
Roughly speaking, up to local regularity assumptions (subexponentiality), we take $K(n)\cong e^{-\mathtt{c}n^{\gamma}}$. More precisely these conditions are satisfied if
\begin{equation}\label{eqIntr6}
 K(n)\sim \frac{L(n)}{n^{\rho}}e^{-\mathtt{c} n^{\gamma}}, \quad n\to\infty,
\end{equation}
with $\rho\in \mathbb{R}$ and $L(\cdot)$ a slowly varying function, cf. Section \ref{Aa1}. 
\end{remark}

\subsection{Main Results}
The aim of this paper is to study the behavior of $\tau/N\cap [0,1]$ under the probability $\mathrm{P}_{\beta,h,N}^{{\omega}}$, when $N$ gets large. 
To have a non trivial behavior we need to fix $h>0$ (which is actually equivalent to set $h=0$ in (\ref{eqIntr1}) and consider a terminating renewal process, cf. Section \ref{Sech3}) and send $\beta$ to $0$ as $N\to\infty$. If $\beta$ goes to $0$ too slowly (or if it does not go to $0$ at all), then $\tau/N\cap [0,1]$ will always converge to the whole $[0,1]$, if it goes too fast, it will converge to $\{0,1\}$. The interesting regime is the following:
\begin{equation}\label{eqIntr7bis}
\beta_N\sim\hb N^{\gamma-\frac{1}{\alpha}}\ell(N), \quad N\to\infty,
\end{equation} 
with $\ell$ a particular slowly varying function defined by $L_0$ in (\ref{eqIntr2}).
Under such rescaling of $\beta$ and such choice of $h>0$ we prove the existence of a random threshold $\hbc$: if $\hb<\hbc$ then $\tau/N\cap [0,1]$ converges to $\{0,1\}$, while if $\hb>\hbc$ then its limit has at least one point in $(0,1)$. 

\medskip

To prove these facts we proceed by steps. In the first one we show that there exists a random set around which $\tau/N\cap [0,1]$ is concentrated with respect to the Hausdorff distance: given two non-empty sets $A,B\subset [0,1]$ 
\begin{equation}\label{eqHaus11}
 d_H(A,B)=\max\left\{\sup_{a\in A}d(a,B),\sup_{b\in B}d(b,A)\right\},
\end{equation}
where $d(z,C)=\inf_{c\in C} |z-c|$ is the usual distance between a point and a set.

\begin{theorem}\label{TheoremMain2} Let $(\beta_N)_N$ be as in \eqref{eqIntr7bis}. Then for any  $N\in \mathbb{N},\, \hb>0$ there exists a random set $ \Idp{\beta_N}$  (i.e. an $\X$-valued random variable) such that for any $\delta,h>0$ one has that $\mathrm{P}_{{\beta}_N,h,N}^{{\omega}}\left(d_H(I,\Idp{\beta_N})>\delta\right)$ converges to $0$ as $N\to\infty$ in probability (with respect to the disorder $\omega$). More precisely for any $\varepsilon>0$ there exist $\nu=\nu(\varepsilon,\delta)$ and $\hat{N}$ such that for all $N>\hat{N}$
 \begin{equation}
\mathbb{P}\left(\mathrm{P}_{{\beta}_N,h,N}^{{\omega}}\left(d_H(I,\Idp{\beta_N})>\delta\right)<e^{-\nu N^{\gamma}}\right)>1-\varepsilon.
 \end{equation}
\end{theorem}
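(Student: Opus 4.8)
\textbf{Proof strategy for Theorem \ref{TheoremMain2}.}

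The plan is to identify the random set $\Idp{\beta_N}$ explicitly as the (rescaled) renewal trajectory that maximizes the Hamiltonian-type functional, and then show that under the Gibbs measure $\mathrm{P}_{\beta_N,h,N}^\omega$ the renewal configuration is, with overwhelming probability, Hausdorff-close to this maximizer. Because the disorder is heavy-tailed with index $\alpha\in(0,1)$, the energy $\sum_{n=1}^{N-1}\beta_N\omega_n\ind(n/N\in I)$ is dominated by a few exceptionally large values of $\omega$. The right object is therefore a functional that, given the largest disorder values (located at positions $\approx t_1,\dots,t_k$ in $[0,1]$ after rescaling), balances the gain from collecting those rewards against the entropic cost $\mathtt{c}N^\gamma (t_{i+1}-t_i)^\gamma$ of forcing the renewal to jump between consecutive selected sites (this is where Assumption \ref{ASSrenP}\eqref{A2} enters, via $\log K(n)\sim -\mathtt{c}n^\gamma$). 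Define $\Idp{\beta_N}$ to be $\{0,1\}$ together with the optimal finite subset of selected big-disorder sites; the normalization \eqref{eqIntr7bis} is precisely the one making the energy term $\beta_N\omega_n$ of the same order $N^\gamma$ as the entropy, so the variational problem is non-degenerate.

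First I would make a coarse-graining reduction: fix a large integer $k$ and show that only the top-$k$ disorder values inside $[0,N]$ can matter, the contribution of all smaller $\omega_n$'s being negligible at exponential scale $e^{-\nu N^\gamma}$ — this uses regular variation of the disorder tail (Assumption \ref{ASSdis}) together with standard order-statistics asymptotics, and it is uniform in $k$ once $k$ is large. Second, for the retained sites I would derive sharp upper and lower bounds on the partition function $\mathrm{Z}_{\beta_N,h,N}^\omega$ by restricting the renewal to pass through prescribed subsets of these sites and using the subexponential local estimates \eqref{eqIntr6} to evaluate $\mathrm{P}(\tau \ni \lfloor Nt_1\rfloor, \dots)$ up to $e^{o(N^\gamma)}$ corrections; the ratio of the "good" partition function (renewal close to the maximizer) to the full one is then controlled by the gap in the variational functional between the optimizer and all competing configurations. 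Third, I would convert this into the Hausdorff-distance statement: if $d_H(I,\Idp{\beta_N})>\delta$, then either $I$ misses one of the optimal sites by more than $\delta N$ (entropic penalty of order $N^\gamma$ by strict concavity of $t\mapsto t^\gamma$) or $I$ contains an extra point far from the maximizer at distance $>\delta N$ from all selected sites (which costs an extra stretched-exponential factor with no compensating reward). Summing over the (polynomially many) ways this can happen gives the bound $e^{-\nu N^\gamma}$ with $\nu=\nu(\varepsilon,\delta)$, after intersecting with a high-probability event on $\omega$ (chosen so that the top disorder values are well-separated and of the expected order, which holds with probability $>1-\varepsilon$).

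The main obstacle I expect is the uniformity in $k$ in the first (truncation) step together with the stability of the variational problem: one must show that the finite-$k$ maximizer converges, as $k\to\infty$, to a limiting random set and that the error made by ignoring disorder values beyond the top $k$ is genuinely $o(N^\gamma)$ uniformly, not merely $o(N)$. This requires quantitative control on the tail of the Poisson–Dirichlet-type limit of the rescaled order statistics and a careful interchange of the limits $N\to\infty$ and $k\to\infty$. A secondary technical point is handling ties or near-ties in the variational functional (the optimizer need not be unique), but since the disorder law has no atom (Assumption \ref{ASSdis}) the limiting problem has an a.s. unique maximizer, and for finite $N$ one absorbs near-optimal competitors into the $\delta$-neighborhood, so this does not affect the statement. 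The entropy/energy balance estimates themselves, once the renewal local limit theorem consequences of \eqref{eqIntr6} are in hand, are routine but lengthy and I would relegate them to lemmas.
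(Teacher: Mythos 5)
Your proposal follows essentially the same route as the paper's own proof: define the maximizer $\Idp{\beta_N}$ of the energy–entropy variational functional, pass to a $k$-truncated Gibbs measure (paper's Definition \ref{DefTruGibb}) that retains only the top-$k$ disorder values, prove concentration for the truncated measure via the two-case decomposition (intersection with top sites far from the maximizer vs.\ an extra stray point, using the strict concavity $a^\gamma+b^\gamma-(a+b)^\gamma>0$ and the renewal asymptotics \eqref{eqRF1}), and then control the truncation error uniformly in $N$ via $\Rnk=\sum_{i>k}M_i^{(N)}$. One small imprecision in your framing: the contribution of the discarded smaller $\omega_n$'s is not itself ``negligible at exponential scale $e^{-\nu N^\gamma}$'' — the paper instead bounds the Radon–Nikodym derivative between the full and truncated measures by $e^{\hat\beta_N N^\gamma \Rnk}$ and shows $\Rnk$ is small with high probability uniformly in $N$, so that this factor merely eats a fixed fraction of the exponential gain $e^{-\nu N^\gamma}$ obtained in Lemma \ref{lemmaConc} rather than being exponentially small itself. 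With that caveat, your decomposition, the role of the concavity of $t\mapsto t^\gamma$, the well-separation event on the top disorder positions, and the $k\to\infty$/$N\to\infty$ interchange via stability of the truncated maximizers all match the paper's argument.
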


The second step regards the convergence in law of $\Idp{\beta_N}$.

\begin{theorem}\label{TheoremMain1}  Let $(\beta_N)_N$ be  as in \eqref{eqIntr7bis}.
Then for any $\hat{\beta}>0$ there exists a random closed subset $\Ic{\hb}\in \X$ (i.e. an $\X$-valued random variable), which depends on a suitable continuum disorder (defined in section \ref{secDisorder}), such that
\begin{equation}
\Idp{\beta_N}\tolaw\Ic{\hb}, \quad N\to\infty
\end{equation}
on $(\X,d_H)$.
\end{theorem}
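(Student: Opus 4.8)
The plan is to realize $\Idp{\beta_N}$ explicitly as a functional of a finite collection of point-process data extracted from the disorder, and then prove convergence by showing that (i) these data converge jointly in law to the corresponding continuum objects, and (ii) the functional that reconstructs $\Idp{\beta_N}$ from them is continuous. First I would recall from the construction behind Theorem~\ref{TheoremMain2} that $\Idp{\beta_N}$ is built by an energy--entropy competition: each candidate configuration $I=\{0=t_0<t_1<\dots<t_\ell=1\}$ has a weight comparing a reward term, coming from the large values of the disorder $\omega_n$ for $n/N\in I$, against an entropic cost of order $N^\gamma$ coming from the stretched-exponential return law $K$. Because $\beta_N\sim\hb N^{\gamma-1/\alpha}\ell(N)$, only the $\omega_n$ of order $N^{1/\alpha}/\ell(N)$ contribute, so the relevant disorder is encoded by the rescaled point process $\sum_n \delta_{(n/N,\,\beta_N\omega_n)}$, which by classical heavy-tailed theory (Assumption~\ref{ASSdis}, $\alpha\in(0,1)$) converges to a Poisson point process on $[0,1]\times(0,\infty)$ with intensity $\mathrm{d}x\otimes \alpha y^{-\alpha-1}\mathrm{d}y$; this limiting process is the ``continuum disorder'' of Section~\ref{secDisorder}.

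The main work is to pass from this weak convergence of disorder to weak convergence of the \emph{argmax} set $\Idp{\beta_N}$. My approach would be a truncation argument: fix a level $k$ and keep only the (finitely many, asymptotically) points of the rescaled disorder with second coordinate exceeding $1/k$, obtaining a truncated maximizer $\Idtr{\beta_N}$ on the discrete side and $\Ictr{\hb}$ on the continuum side (the superscript $(k)$ notation in the preamble suggests the authors organize the proof exactly this way). For fixed $k$, the truncated problem involves only finitely many points whose positions and heights converge jointly in law, together with a finite number of entropy terms $\log K$; using Assumption~\ref{ASSrenP} one shows $N^{-\gamma}\log K(\lfloor Nx\rfloor)\to -\mathtt{c}x^\gamma$ uniformly on compacts, so the rescaled objective function converges (in law, jointly with the disorder) to an explicit continuous functional on tuples. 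An argmax/continuous-mapping argument — using that the limiting objective has a.s.\ a unique maximizer, which follows because the Poisson heights have no ties and the law of $\omega_1$ has no atom — then gives $\Idtr{\beta_N}\tolaw \Ictr{\hb}$ on $(\X,d_H)$ for each fixed $k$.

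Finally I would remove the truncation by a standard two-parameter limit exchange: show that $\Ictr{\hb}\to\Ic{\hb}$ in $(\X,d_H)$ almost surely (or in probability) as $k\to\infty$ — this is where one checks that discarding small disorder values does not move the maximizer, because the entropic cost $\mathtt{c}N^\gamma$-scale forces any competitive configuration to collect a reward of matching order, which only large $\omega_n$ can supply — and, symmetrically, that $\Idtr{\beta_N}$ is uniformly close to $\Idp{\beta_N}$: namely $\lim_{k\to\infty}\limsup_{N\to\infty}\mathbb{P}\big(d_H(\Idtr{\beta_N},\Idp{\beta_N})>\delta\big)=0$. Combining these three facts via a standard $3\varepsilon$/Billingsley-type interchange-of-limits lemma yields $\Idp{\beta_N}\tolaw \Ic{\hb}$.

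The hard part will be the uniform-in-$N$ truncation control, i.e.\ showing that the contribution of the ``bulk'' disorder (values of $\beta_N\omega_n$ below $1/k$) to the maximization is negligible uniformly in $N$, so that the $k\to\infty$ and $N\to\infty$ limits may be exchanged. This requires quantitative tail estimates: one must bound, uniformly in $N$, the probability that a near-optimal configuration in the full problem differs in Hausdorff distance from the truncated optimizer, which amounts to controlling sums of many moderate heavy-tailed variables against the $N^\gamma$ entropy scale — delicate precisely because $\alpha<1$ means these truncated sums are themselves heavy-tailed and not concentrated. The uniqueness of the continuum maximizer, and measurability/regularity of the map $\omega\mapsto\Ic{\hb}$, are comparatively routine but must be stated carefully to license the argmax continuous-mapping step.
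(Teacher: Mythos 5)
Your proposal follows the paper's route precisely: both reorganize the disorder into ordered statistics and positions (equivalently, a point process), truncate to the top $k$ levels, prove $\Idtr{\hb_N}\tolaw\Ictr{\hb}$ for each fixed $k$ (the paper uses a Skorokhod coupling, Lemma~\ref{RSko}, and convergence of the optimal index sets, Proposition~\ref{PropConA}, rather than an explicit continuous-mapping argument, but the two are interchangeable here since the truncated problem is a finite maximization with an a.s.\ unique maximizer), control the tail $\Rnk=\sum_{i>k}M_i^{(N)}$ uniformly in $N$ (Proposition~\ref{PropConI}, Part~\eqref{Tcnlemma3}, adapting \cite{HM07}), and close via Billingsley's interchange-of-limits diagram. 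Your closing diagnosis that the uniform truncation step is delicate because the truncated sums are ``heavy-tailed and not concentrated'' is slightly off — $\alpha<1$ is exactly what makes $\sum_i M_i^{(\infty)}$ summable (the ordered statistics decay like $i^{-1/\alpha}$), and the paper's bound on $\mathbb{E}[M_r^{(N)};\mathcal B_k^{(N)}]$ succeeds precisely because of this — but the step you flag as the technical crux is indeed where the imported estimates from \cite{HM07} do the work.
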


As a consequence of these Theorems, if we look at $ \mathrm{P}_{{\beta}_N,h,N}^{{\omega}}$ as a random probability on $\X$, i.e. as a random variable in $\mathcal{M}_1(\X,d_H)$, the space of the probability measures on $\X$, then Theorems \ref{TheoremMain2} and \ref{TheoremMain1} imply that it converges in law to the $\delta$-measure concentrated on the limit set $\Ic{\hb}$. 

\begin{theorem}\label{TheoremMain3}  Let $(\beta_N)_N$ be as in \eqref{eqIntr7bis}. Then for any $h,\hat{\beta}\in (0,\infty)$,
 \begin{equation}
   \mathrm{P}_{{\beta}_N,h,N}^{{\omega}}\tolaw\delta_{\Ic{\hb}}, \quad N\to\infty
 \end{equation}
  on $\mathcal{M}_1(\X,d_H)$ equipped with the weak topology.
\end{theorem}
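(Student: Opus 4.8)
The plan is to deduce Theorem \ref{TheoremMain3} from the two previous theorems by a soft argument about convergence in the space $\mathcal{M}_1(\X,d_H)$. Recall that $\X$, equipped with the Hausdorff distance $d_H$, is a compact metric space (it is the space of nonempty closed subsets of $[0,1]$ containing $0$ and $1$; compactness follows from the standard fact that the hyperspace of closed subsets of a compact metric space is compact). Consequently $\mathcal{M}_1(\X,d_H)$ with the weak topology is itself a compact metrizable space, metrized e.g. by the L\'evy--Prokhorov or the bounded-Lipschitz distance. So it suffices to identify the limit of the random variables $\mathrm{P}^{\omega}_{\beta_N,h,N}$, viewed as $\mathcal{M}_1(\X,d_H)$-valued random variables, and the natural candidate is $\delta_{\Ic{\hb}}$, which is itself a random element of $\mathcal{M}_1(\X,d_H)$ since $\Ic{\hb}$ is an $\X$-valued random variable.

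First I would fix a bounded Lipschitz test function $F\colon \X\to\R$, say with $\|F\|_\infty\le 1$ and Lipschitz constant $1$ with respect to $d_H$, and consider the real random variable $\langle \mathrm{P}^{\omega}_{\beta_N,h,N}, F\rangle = \int_{\X} F(I)\,\mathrm{P}^{\omega}_{\beta_N,h,N}(\mathrm{d}I)$. The triangle inequality gives
\begin{equation}\nonumber
\left| \langle \mathrm{P}^{\omega}_{\beta_N,h,N}, F\rangle - F(\Idp{\beta_N}) \right|
\le \int_{\X} \left| F(I) - F(\Idp{\beta_N}) \right| \mathrm{P}^{\omega}_{\beta_N,h,N}(\mathrm{d}I)
\le \delta + 2\,\mathrm{P}^{\omega}_{\beta_N,h,N}\!\left( d_H(I,\Idp{\beta_N}) > \delta \right),
\end{equation}
for every $\delta>0$, using $\|F\|_\infty\le 1$ on the event $\{d_H>\delta\}$ and the Lipschitz bound on its complement. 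By Theorem \ref{TheoremMain2} the second term on the right tends to $0$ in $\bbP$-probability as $N\to\infty$, and $\delta$ is arbitrary; hence $\langle \mathrm{P}^{\omega}_{\beta_N,h,N}, F\rangle - F(\Idp{\beta_N}) \to 0$ in probability. On the other hand, by Theorem \ref{TheoremMain1} we have $\Idp{\beta_N} \tolaw \Ic{\hb}$ on $(\X,d_H)$, and since $F$ is continuous the continuous mapping theorem yields $F(\Idp{\beta_N}) \tolaw F(\Ic{\hb})$. Combining the two (a convergence in probability to $0$ added to a convergence in law), we get $\langle \mathrm{P}^{\omega}_{\beta_N,h,N}, F\rangle \tolaw F(\Ic{\hb}) = \langle \delta_{\Ic{\hb}}, F\rangle$ for each fixed bounded Lipschitz $F$.

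To promote this one-test-function statement to convergence in law in $\mathcal{M}_1(\X,d_H)$, I would invoke tightness, which here is automatic: since $\X$ is compact, $\mathcal{M}_1(\X,d_H)$ is compact, so the sequence of laws of $\mathrm{P}^{\omega}_{\beta_N,h,N}$ on $\mathcal{M}_1(\X,d_H)$ is tight and every subsequence has a further subsequence converging in law to some random probability measure $Q$. For any fixed bounded Lipschitz $F$, the limit $Q$ must satisfy $\langle Q,F\rangle \overset{(\mathrm{d})}{=} \langle \delta_{\Ic{\hb}}, F\rangle$ by the previous paragraph; more carefully, I would run the same argument jointly for a fixed finite collection $F_1,\dots,F_m$ (the bound above is linear in $F$, so nothing changes) to get convergence of the finite-dimensional distributions $(\langle \mathrm{P}^{\omega}_{\beta_N,h,N}, F_i\rangle)_{i\le m} \tolaw (\langle \delta_{\Ic{\hb}}, F_i\rangle)_{i\le m}$. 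Because the weak topology on $\mathcal{M}_1(\X,d_H)$ is generated by a countable family of such bounded Lipschitz functionals (separability of $C(\X)$, $\X$ being compact metric), convergence of all such finite-dimensional distributions characterizes the law on $\mathcal{M}_1(\X,d_H)$, so $Q\overset{(\mathrm{d})}{=}\delta_{\Ic{\hb}}$; as the subsequential limit is always the same, the full sequence converges, which is the claim.

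The argument is essentially soft; the only genuine point requiring care — and the step I expect to be the main (minor) obstacle — is the passage from "convergence in law of each coordinate functional" to "convergence in law as $\mathcal{M}_1(\X,d_H)$-valued random variables," i.e. checking that the weak topology on $\mathcal{M}_1(\X,d_H)$ is indeed induced by countably many bounded Lipschitz functionals and that these separate points of $\mathcal{M}_1(\X,d_H)$, so that finite-dimensional distributions determine the law. This is standard for a compact (or Polish) base space but should be stated explicitly; everything else is the triangle inequality, the continuous mapping theorem, and Slutsky-type combination of convergence in probability with convergence in law.
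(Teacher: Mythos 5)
Your proof is correct, and it follows a genuinely different (though neighbouring) route from the paper's. The paper's argument leans on the Skorokhod coupling set up in Lemma \ref{RSko}: via Remark \ref{RemOnTh1} it upgrades Theorem \ref{TheoremMain1} to convergence \emph{in probability} $\Id{\hb_N}\to\Ic{\hb}$ under the coupling, then uses compactness of $\mathcal{M}_1(\X)$ to extract a subsequence along which both this and the conclusion of Theorem \ref{TheoremMain2} hold \emph{almost surely}, and finally applies the deterministic Portmanteau-type Lemma \ref{lemmagibbsdet} ($x_N\to\bar x$ and $\mu_N(d(\cdot,x_N)>\varepsilon)\to 0$ imply $\mu_N\rightharpoonup\delta_{\bar x}$) $\omega$-by-$\omega$ on a full-measure event. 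You avoid the coupling altogether: you test against bounded Lipschitz $F$, split via the triangle inequality into a small-distance and a large-$\mathrm{P}^{\omega}_{\beta_N,h,N}$-probability contribution, kill the latter with Theorem \ref{TheoremMain2}, identify $F(\Idp{\beta_N})\tolaw F(\Ic{\hb})$ from Theorem \ref{TheoremMain1} and continuous mapping, combine by Slutsky, and then promote to $\mathcal{M}_1(\X)$-valued convergence by tightness (automatic, since $\mathcal{M}_1(\X)$ is compact) plus the standard fact that a countable family of bounded Lipschitz functionals embeds $\mathcal{M}_1(\X)$ homeomorphically into $\R^{\N}$, so finite-dimensional distributions determine the law. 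The trade-off: your version is self-contained at the level of distributional convergence and does not require having realized the discrete and continuum disorders on a common probability space, at the cost of a slightly longer final bookkeeping step about separating functionals; the paper's version is shorter \emph{given} that the coupling was already needed for the earlier results, and reduces the statement to a clean deterministic lemma. Both are complete proofs.
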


This concludes our results about the convergence of the random set $\tau/N\cap [0,1]$, now we want to discuss the structure of its limit. We prove that there exists a critical point $\hbc$ such that, if $\beta<\hbc$, then $\tau/N\cap [0,1]$ has a trivial limit, given by $\{0,1\}$. Otherwise, if $\beta>\hbc$, then the limit sets has points in $(0,1)$.

\smallskip

We define the random threshold $\hbc$ as

\begin{equation}\label{EqRPT}
\hbc=\inf\{\hb : \Ic{\hb}\not\equiv \{0,1\}\}.
\end{equation}

Denoting by $\mathbb{P}$ the law of the continuum disorder, by a monotonicity argument (cf. Section \ref{StrI}) we have that

\begin{enumerate}
\item \label{a} If $\hb<\hbc$, then $\Ic{\hb}\equiv\{0,1\}$, $\mathbb{P}$-a.s.
\item \label{b} If $\hb>\hbc$, then $\Ic{\hb}\not\equiv\{0,1\}$, $\mathbb{P}$-a.s.
\end{enumerate} 
Moreover the structure of $\hbc$ is described by the following result
\begin{theorem}\label{TheoremMain4}  For any choice of $\alpha,\gamma\in (0,1)$ we have that
$\hbc>0$ $\mathbb{P}$-a.s., where $\alpha$ is the disorder exponent in  Assumption \ref{ASSdis}, while $\gamma$ is the renewal exponent of Assumption \ref{ASSrenP}.
\end{theorem}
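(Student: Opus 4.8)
\textbf{Proof plan for Theorem \ref{TheoremMain4}.}

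The plan is to show that for small enough $\hb$ the continuum limit set $\Ic{\hb}$ is forced to be trivial, which by the monotonicity in item \eqref{a} above immediately yields $\hbc>0$. The natural route is a variational/energy-entropy comparison carried out directly at the level of the continuum model. Recall that $\Ic{\hb}$ is built from a continuum disorder, which in the heavy-tailed regime should be encoded by a Poisson point process on $[0,1]\times(0,\infty)$ whose atoms $(t_i,w_i)$ have intensity proportional to $\mathrm dt\,\alpha w^{-\alpha-1}\mathrm dw$ (the scaling limit of the rescaled $\beta_N\omega_n$ under \eqref{eqIntr2} and \eqref{eqIntr7bis}). The continuum energy of a configuration $I\in\X$ that uses a finite set of atoms located at times $t_{i_1}<\dots<t_{i_m}$ is, heuristically, $\hb\sum_{j} w_{i_j}$ minus a "continuum entropy" cost coming from the stretched-exponential return law, which by Assumption \ref{ASSrenP}\eqref{A2} contributes a term of order $\mathtt c\sum_j (t_{i_{j+1}}-t_{i_j})^{\gamma}\cdot(\text{appropriate power of }N\text{ absorbed in the scaling})$; after the correct rescaling this becomes a finite functional on $\X$. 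I would first make this description precise by quoting the construction in Section \ref{secDisorder}, writing $\Ic{\hb}=\argmax_{I}\{\hb\,\sigma(I)-\text{(entropy term)}\}$ for the relevant energy functional $\sigma$.

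Next I would estimate the two competing terms. The key point is that making $\Ic{\hb}$ nontrivial requires selecting at least one atom $(t_i,w_i)$ with $t_i\in(0,1)$, and the entropy cost of visiting such an interior point is bounded below by a strictly positive constant $c_{\gamma}>0$ depending only on $\gamma$ and $\mathtt c$ (roughly $\mathtt c\big(t_i^{\gamma}+(1-t_i)^{\gamma}-1\big)$, whose infimum over $t_i\in(0,1)$ is a fixed positive number because $x\mapsto x^{\gamma}$ is strictly concave for $\gamma\in(0,1)$). On the gain side, the largest available atom weight is $W^*:=\sup_i w_i$, which is an almost surely finite random variable with a known (Fréchet-type) distribution. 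Therefore, on the event $\{\hb W^*< c_{\gamma}\}$ the trivial configuration $\{0,1\}$ strictly beats every configuration with an interior point, so $\Ic{\hb}\equiv\{0,1\}$. Since $W^*$ is a.s.\ finite, for every fixed realization of the continuum disorder there is a (random) $\hb_0>0$ with $\hb W^*<c_{\gamma}$ for all $\hb<\hb_0$; hence $\hbc\ge \hb_0>0$ $\mathbb P$-a.s.

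The main obstacle I anticipate is twofold. First, one must control the full supremum over $I\in\X$, not just over configurations using a single atom: a configuration could try to use many small atoms, and one has to check that the aggregate gain $\hb\sum_j w_{i_j}$ from atoms with small weights cannot overcome the aggregate entropy cost. This is handled by noting that each used atom beyond the first also pays a positive entropy increment (again by strict concavity of $x^{\gamma}$, the cost of $m$ interior points grows at least linearly in $m$ up to the partition of $[0,1]$), while the Poisson intensity $\alpha w^{-\alpha-1}$ with $\alpha<1$ makes $\sum_i w_i\,\ind(w_i\le\epsilon)$ finite and small with $\epsilon$; combining these gives a uniform bound. Second, one must make sure the continuum functional is genuinely the $\Gamma$-limit of the discrete ones so that "the argmax is $\{0,1\}$" is not vacuous — but this is exactly the content already packaged in Theorems \ref{TheoremMain2} and \ref{TheoremMain1}, so I would invoke those rather than reprove convergence. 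The remaining work is then the elementary but slightly delicate real-analysis estimate that $\inf_{t\in(0,1)}\big(t^{\gamma}+(1-t)^{\gamma}\big)>1$ and its $m$-point generalization, together with the standard fact that a Fréchet random variable is a.s.\ finite.
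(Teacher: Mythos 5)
Your high-level strategy (an energy--entropy comparison showing that for small $\hb$ the trivial set $\{0,1\}$ beats every competitor) is the right one, and your description of the continuum disorder as a Poisson point process with intensity $\mathrm dt\,\alpha w^{-\alpha-1}\mathrm dw$ is correct. However, the proposal breaks at the very step you flag as ``elementary but slightly delicate'': the claim that
\begin{equation*}
\inf_{t\in(0,1)}\bigl(t^{\gamma}+(1-t)^{\gamma}\bigr)>1
\end{equation*}
is \emph{false}. Strict concavity of $x\mapsto x^{\gamma}$ gives $t^{\gamma}+(1-t)^{\gamma}>1$ pointwise for every $t\in(0,1)$, but as $t\to 0^{+}$ (or $t\to 1^{-}$) the left-hand side tends to $1$, so the infimum over the open interval is exactly $1$, not a positive constant $c_{\gamma}$. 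Consequently there is no uniform entropy threshold for visiting an interior point: atoms located very close to $0$ or $1$ can be picked up at vanishing entropy cost, and comparing $\hb W^{*}$ to a nonexistent constant $c_{\gamma}$ does not close the argument. This is precisely the obstacle that makes the theorem nontrivial and that forces the interplay between the two exponents $\alpha$ and $\gamma$.

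The paper resolves this with a two-step argument. Step one is close in spirit to your $W^{*}$ idea: using that $S=\sum_i M_i^{(\infty)}<\infty$ a.s. (this is where $\alpha<1$ enters), one shows that for $\hb$ small enough $\Ic{\hb}\subset[0,\varepsilon]\cup[1-\varepsilon,1]$, since any interior point in $(\varepsilon,1-\varepsilon)$ would cost entropy at least $\varepsilon^{\gamma}+(1-\varepsilon)^{\gamma}-1>0$ while the total available energy is at most $\hb S$. But this only localizes the set near the endpoints; it does not show it equals $\{0,1\}$. Step two is the genuinely new ingredient you are missing: to rule out small excursions into $(0,\hat\varepsilon]\cup[1-\hat\varepsilon,1)$ one needs a quantitative bound on the Poisson mass near the boundary. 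The paper writes the cumulative energy in $[0,t]\cup[1-t,1]$ as an $\alpha$-stable subordinator $X_t$ and invokes the law of the iterated logarithm at $0$ (Bertoin) to get $X_{\hat\varepsilon}\le C\,\hat\varepsilon^{1/\alpha}\log^{q/\alpha}(1/\hat\varepsilon)$ a.s.\ for small $\hat\varepsilon$; this is compared with the entropy lower bound $\gtrsim\hat\varepsilon^{\gamma}$, and since $1/\alpha>1>\gamma$ the entropy wins, forcing $\hat\varepsilon=0$. Your ``aggregate gain from small atoms'' concern is handled by $S<\infty$, but the real work is the small-$t$ asymptotics of the subordinator, which your proposal does not touch. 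Without that second step, your argument only proves the (much weaker) statement that $\Ic{\hb}$ is eventually contained in arbitrarily small neighborhoods of $\{0,1\}$, not that it equals $\{0,1\}$.
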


By using the same technique we complete the result \cite[Prop. 2.5]{AL11} about the structure of $\beta_c$, the random threshold defined for the directed polymer model in a random environment with heavy tails (we recall its definition in Section \ref{SecGamma}). Precisely
\begin{theorem}\label{eq32al}
Let $\beta_c$ as in \eqref{ubetacAA}, then, if $\mathbb{P}_{\infty}$ denotes the law of the continuum environment,
 \begin{enumerate}
  \item For any $\alpha\in (0,\frac{1}{2})$, $\beta_c>0$, $\mathbb{P}_{\infty}$-a.s.
  \item For any $\alpha\in [\frac{1}{2},2)$, $\beta_c=0$, $\mathbb{P}_{\infty}$-a.s.
 \end{enumerate}
\end{theorem}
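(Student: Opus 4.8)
\textbf{Proof proposal for Theorem \ref{eq32al}.}

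The plan is to reduce the problem to a statement about the continuum polymer energy functional used in \cite{AL11}, mirroring the argument that yields Theorem \ref{TheoremMain4}. Recall that in the heavy-tailed directed polymer picture the continuum environment is a Poisson point process on $[0,1]\times\R$ with intensity determined by $\alpha$, and $\beta_c$ is the infimum of those $\hb$ for which the energy-maximizing path is non-trivial (the precise functional is the one recalled via \eqref{ubetacAA} in Section \ref{SecGamma}). Thus $\{\beta_c>0\}$ is equivalent to saying that for small enough coupling the optimal path collects no atoms of the Poisson process in the interior, while $\{\beta_c=0\}$ says the optimum is always non-trivial however small the coupling. Both are $0$--$1$ type events by the same monotonicity/ergodicity argument already invoked for properties \eqref{a}--\eqref{b} above, so it suffices to estimate one probability.

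For part (1), $\alpha\in(0,\tfrac12)$: I would show that with positive (hence full) probability the energetic gain of any admissible interior path configuration is outweighed by its entropic cost when $\hb$ is small. Concretely, a path that visits $m$ atoms of the Poisson process located at heights $w_1,\dots,w_m$ pays an entropy cost of order $\sum \Delta_i^{\gamma}$ for the gaps $\Delta_i$ between consecutive contact times (this is exactly where Assumption \ref{ASSrenP}\eqref{A2}, i.e. the stretched-exponential renewal, enters, just as in the proof of Theorem \ref{TheoremMain2}), while the gain is $\hb \sum w_i$. The key is a union bound over the number $m$ of visited atoms together with the tail estimate for the largest Poisson atoms: because $\alpha<\tfrac12$, the series controlling the expected number of "profitable" configurations converges, so for $\hb$ below a deterministic threshold the probability that any non-trivial path beats the trivial one is strictly less than $1$; then a Borel--Cantelli / monotonicity argument upgrades this to $\beta_c>0$ a.s.

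For part (2), $\alpha\in[\tfrac12,2)$: here one exhibits, for every $\hb>0$, a non-trivial path that strictly improves on the trivial one, almost surely. The idea is to use a single very large atom: since $\alpha<2$, the Poisson process has atoms of arbitrarily large height arbitrarily close to any fixed interior time with probability one, and the entropic cost of a short detour to collect one such atom is bounded, so for any fixed $\hb>0$ a sufficiently large atom makes the detour favorable. This forces $\beta_c=0$ a.s. The main obstacle I expect is making the first part quantitative: one must control the full optimization over path geometries — the number of interior contacts, the locations of the gaps, and which atoms are collected — uniformly, and show the resulting expected number of profitable configurations is summable precisely when $\alpha<\tfrac12$. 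This is the analogue of the delicate estimate behind Theorem \ref{TheoremMain4}, and the dichotomy at $\alpha=\tfrac12$ should emerge from balancing the Poisson tail exponent against the (square-root-type) fluctuation scaling of sums of heavy-tailed atoms along a path, exactly the same mechanism that produced the exponent $\gamma-\tfrac1\alpha$ in \eqref{eqIntr7bis}.
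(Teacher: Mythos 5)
There is a genuine gap, and it begins with a misidentification of the entropy functional. For the directed polymer of Section~\ref{SecGamma} the entropy is \emph{not} the stretched-exponential pinning entropy $\sum_i \Delta_i^{\gamma}$ from Assumption~\ref{ASSrenP}; it is the large-deviation rate function of the simple random walk, $E(\gamma)=\int_0^1 e(\gamma'(x))\,dx$ with $e(x)=\tfrac12((1+x)\log(1+x)+(1-x)\log(1-x))$ (Definition~\ref{defEntrA}), whose crucial feature is the \emph{quadratic} lower bound $E(\gamma_z)\geq C_0 y^2$ for the tent path through $z=(x,y)$, cf.\ \eqref{eqSquare}. Your proposed cost $\sum\Delta_i^{\gamma}$ belongs to a different model entirely, so the balance you set up cannot produce the threshold $\alpha=\tfrac12$. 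Also, the claim that $\{\beta_c>0\}$ is a $0$--$1$ event "by the same monotonicity argument as (a)--(b)" is unjustified: properties \eqref{a}--\eqref{b} are mere consequences of monotonicity in $\hb$ and the definition of $\hbc$ as an infimum, not a $0$--$1$ law, and the paper never establishes such a law — it proves $\beta_c>0$ a.s.\ directly.

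The paper's actual route is quite different from your first-moment / union-bound plan, and the difference matters: a counting argument over profitable configurations is essentially what \cite{AL11} used, and it is precisely what left the range $\alpha\in(\tfrac13,\tfrac12)$ open (Remark~\ref{rem:AL11bcora}). The paper instead (i) localizes the optimal path to a thin strip around $0$ for small $\beta$ (Proposition~\ref{prop:bc1osti}, via the a.s.\ finite dominating sum $S$), (ii) bounds the total charge in the strip $A_{\hat\varepsilon}$ by an $\alpha$-stable subordinator evaluated near time $0$, which by Proposition~\ref{propLPP} (a Bertoin/Khintchine-type small-time estimate) gives $\pi_\infty(\hat\gamma_\beta)\leq C\,\hat\varepsilon^{1/\alpha}\log^{q/\alpha}(1/\hat\varepsilon)$, and (iii) compares with the quadratic entropy bound $E(\hat\gamma_\beta)\geq C_0\hat\varepsilon^2$. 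Since $\alpha<\tfrac12$ means $1/\alpha>2$, the energy term is strictly dominated for $\hat\varepsilon$ small, forcing $\hat\varepsilon=0$ and hence $\hat\gamma_\beta\equiv 0$ for all $\beta$ below a random $\beta_0>0$. Your argument does not identify this mechanism (quadratic entropy versus $1/\alpha$-subordinator scaling), and without it the union bound has no reason to close at $\alpha=\tfrac12$. Finally, for part (2) your single-large-atom idea is reasonable in spirit, but the paper simply cites \cite{AL11} for that part; it proves nothing new there.
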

\begin{remark}\label{rem:AL11bcora}
In \cite{AL11} the value of $\beta_c$ was unknown for $\alpha\in (1/3, 1/2)$.
\end{remark}

\subsection{Organization of the Paper}
In rest of the paper  we prove the results of this section. Section \ref{SecEEVE} contains some preliminary definitions and tools that we use for our proofs. Sections \ref{SecConver} contains the proof of Theorem \ref{TheoremMain1} and Section \ref{SecConcen} the proof of Theorems \ref{TheoremMain2} and \ref{TheoremMain3}. In Section \ref{StrI} we prove Theorem \ref{TheoremMain4} and then in Section \ref{SecGamma} we recall the definition of the Directed Polymer Model, proving Theorem \ref{eq32al}. 
Finally in Section \ref{SecPersp} we discuss the choice of the parameters $\alpha,\gamma$ and the future perspectives.

\section{Energy \& entropy}\label{SecEEVE}

In this section we define the random sets $\Idp{\beta}$, $\Ic{\hb}$ and we motivate the choice of $\beta_N$ in (\ref{eqIntr7bis}).

\medskip

To define the random set $\Idp{\beta}$ we compare the \emph{Energy} and the \emph{entropy} of a given configuration: for a finite set $I=\{x_0=0<x_1<\cdots<x_{\ell}=1\}$ we define its \emph{Energy} as
\begin{equation}\label{eqIntr3}
 \sigma_{N}(I) = \sum_{n=1}^{N-1} \omega_n\mathds{1}(n/N\in I)
\end{equation}
and its \emph{entropy} as 
\begin{equation}
 E(I)=\sum_{k=1}^{\ell}(x_i-x_{i-1})^{\gamma}.
\end{equation}

By using these two ingredients we define
\begin{equation}\label{eqIntr5}
 \Idp{\beta}=\displaystyle\argmax_{I\in\mathrm{X}^{(N)}}\left( \beta\sigma_N(I)-\mathtt{c}N^{\gamma}E(I)\right),
\end{equation}
where $\gamma $ and $\mathtt{c}$ are defined in \eqref{A2} of Assumption \ref{ASSrenP} and $\Xdi$ is the space of all possible subsets of $\{0,1/N,\cdots,1\}$ containing $0$ and $1$. 

\smallskip

By using (\ref{eqIntr5}) we can find the right rescaling for $\beta$: indeed it  has to be chosen in such a way to make the Energy and the entropy comparable. For this purpose it is convenient to work with a rescaled version of the disorder. We consider $(\tilde{M}_i^{(N)})_{i=1}^{N-1}$ the ordered statistics of $(\omega_i)_{i=1}^{N-1}$ --- which means that $\tilde{M}_1^{(N)}$ is the biggest value among $\omega_1,\cdots, \omega_{N-1}$, $\tilde{M}_2^{(N)}$ is the second biggest one and so on --- and $(Y_i^{(N)})_{i=1}^{N-1}$ a random permutation of $\{\frac{1}{N},\cdots 1-\frac{1}{N}\}$, independent of the ordered statistics. The sequence $((\tilde{M}_i^{(N)}, Y_i^{(N)})_{i=1}^{N-1}$ recovers the disorder $(\omega_i)_{i=1}^{N-1}$. 
The asymptotic behavior of such sequence is known and it allows us to get the right rescaling of $\beta$. Let us recall the main result that we need.

\subsection{The Disorder}\label{secDisorder}
Let us start to note that for any fixed $k$ as $N\to \infty$
\begin{equation}\label{EqConvR1.1}
 (Y_i^{(N)})_{i=1,\cdots,k}\tolaw (Y_i^{(\infty)})_{n=1,\cdots,k},
\end{equation}
where $(Y_i^{(\infty)})_{i\in\mathbb{N}}$ is an i.i.d. sequence of $\textrm{Uniform}([0,1])$. 

\medskip

For the ordered statistics, from classical extreme value theory, see e.g. \cite[Section 1.1]{R87}, we have that there exists a sequence $(b_N)_N$ such that for any fixed $k>0$, as $N\to \infty$
\begin{equation}\label{EqConvR1}
 (M_i^{(N)}:=b_N^{-1}\tilde{M}_i^{(N)})_{i=1,\cdots,k}\tolaw (M_i^{(\infty)})_{n=1,\cdots,k},
\end{equation}
where $M_i^{(\infty)}=T_i^{-{1}/{\alpha}}$, with $T_i$ a sum of $i$ independent exponentials of mean $1$ and $\alpha$ is the exponent of the disorder introduced in (\ref{eqIntr2}). 
The sequence $b_N$ is characterized by the following relation
\begin{equation}\label{eq:b_Nbev}
\mathbb{P}\left(\omega_1>b_N\right)\sim\frac{1}{N}, \quad N\to\infty
.
\end{equation}
This implies that $b_N\sim N^{\frac{1}{\alpha}}\ell_0(N)$, where $\ell_0(\cdot)$ is a suitable slowly varying function uniquely defined by $L_0(\cdot)$, cf. (\ref{eqIntr2}). 

\smallskip

We can get a stronger result without a big effort, which will be very useful in the sequel. Let us consider the (independent) sequences $({M}_i^{(N)})_{i=1}^{N-1}$ and $(Y_i^{(N)})_{i=1}^{N-1}$ and 
\begin{align}
  &w_i^{(N)} := \begin{cases}
  ({M}_i^{(N)},Y_i^{(N)})_{i=1}^{N-1}, & i<N, \\ 
  0, & {i\geq N},\end{cases} \\
  & w_i^{(\infty)} :=  ({M}_i^{(\infty)},Y_i^{(\infty)})_{i\in\mathbb{N}},
 \end{align}
We can look at $w^{(N)}=(w_i^{(N)})_{i\in\mathbb{N}}$ and $w^{(\infty)}=(w_i^{(\infty)})_{i\in\mathbb{N}}$ as random variables taking values in $\mathcal{S} := (\mathbb{R}^2)^{\mathbb{N}}$.
Let us equip  $\mathcal{S}$ with the product topology: a sequence $x^{(N)}$ converges to $x^{(\infty)}$ if and only if for any fixed $i\in\mathbb{N}$ one has $\lim_{N\to\infty}x_i^{(N)}=x_i^{(\infty)}$. In such a way 
$\mathcal{S}$ is a completely metrizable space and a $\mathcal{S}$-valued random sequence $(w^{(N)})_N$ converges in law to $w^{(\infty)}$ if and only if for any fixed $k$, the truncated sequence $(w_1^{(N)},\cdots,w_k^{(N)},0,\cdots)$ converges in law to $(w_1^{(\infty)},\cdots,w_k^{(\infty)},0,\cdots)$. Therefore  (\ref{EqConvR1.1}) and (\ref{EqConvR1}) imply that 
\begin{equation}\label{EqConvR1.2}
w^{(N)} \tolaw w^{(\infty)}, \quad N\to\infty
\end{equation}
in $\mathcal{S}$. Henceforth we refer to $w^{(N)}$ as the Discrete Disorder of size $N$, and to $w^{(\infty)}$ as the Continuum Disorder.

\subsection{The Energy}
Recalling (\ref{eqIntr3}) we define the rescaled discrete Energy function $\sD:\X\to \mathbb{R}_+$ as
\begin{align}\label{eqResc}
 \sD(\cdot)=\frac{\sigma_{N}(\cdot)}{b_N}=\displaystyle\sum_{i=1}^{N-1} M_i^{(N)}\mathds{1}(Y_i^{(N)}\in \cdot),
 \end{align}
 and (\ref{eqIntr5}) becomes
\begin{equation}\label{eqIntr55bix1}
\Idp{\frac{N^{\gamma}}{b_N}\beta}=\displaystyle\argmax_{I\in\Xdi}\left( \beta \sD(\cdot)-\mathtt{c}E(I)\right),
\end{equation} 
Therefore we choose $\beta_N$ such that
\begin{equation}\label{eqhbxyw}
 \hb_N:=\frac{b_N}{N^{\gamma}}\beta_N
\end{equation}
converges to $\hb\in (0,\infty)$. This is equivalent to relation (\ref{eqIntr7bis}). Since in the sequel we will study the set $\Idp{\frac{N^{\gamma}}{b_N}\beta}$, it is convenient to introduce the notation
\begin{equation}\label{eqIdhat}
\Id{\beta}=\Idp{\frac{N^{\gamma}}{b_N}\beta}.
\end{equation}
In particular $\Id{\hb_N}=\Idp{\beta_N}$.

\begin{remark}
Let us stress that the value of $\mathtt{c}$ is inessential and it can be included in the parameter $\hb$ by a simple rescaling. Therefore from now on we assume $\mathtt{c}=1$.
\end{remark}

It is essential for the sequel to extend the definition of $\Id{\beta}$ to the whole space $\X$ equipped with the Hausdorff metric. This generalization leads us to define the same kind of random set introduced in (\ref{eqIntr55bix1}) in which we use suitable continuum Energy and entropy.

We define the continuum Energy Function $\sC:\X\to \mathbb{R}_+$ as
 \begin{align}\label{EqEnCont}
 \sC(\cdot)=\displaystyle\sum_{i=1}^{\infty} M_i^{(\infty)} \mathds{1}(Y_i^{(\infty)}\in \cdot),
\end{align}
where $(M_i^{(\infty})_{i\in\mathbb{N}}$ and $(Y_i^{(\infty})_{i\in\mathbb{N}}$ are the two independent random sequences introduced in (\ref{EqConvR1.1}) and (\ref{EqConvR1}). 

\begin{remark}\label{rem:aboutMi}
Let us observe that $\sC(I)<\infty$ for all $I\in\X$, because the series $\sum_{i=1}^\infty M_i^{(\infty)}$ converges a.s. Indeed, the law of large numbers ensures that a.s.  $M_i^{(\infty)}\sim i^{-\frac{1}{\alpha}}$ as $i\to\infty$, cf. its definition below \eqref{EqConvR1}, and  $\alpha\in (0,1)$.
\end{remark}

\smallskip

We conclude this section by proving that $\sD$, with $N\in \mathbb{N}\cup\{\infty\}$, is an upper semi-continuous function. 
For this purpose, for $k,N\in \mathbb{N}\cup\{\infty\}$ we define the $k$-truncated Energy function as
\begin{align}\label{eqEntrxyz}
 &\sDtr(\cdot)=\displaystyle\sum_{i=1}^{(N-1)\wedge k} M_i^{(N)} \mathds{1}(Y_i^{(N)}\in \cdot).
\end{align}
Let us stress that the support of $\sDtr$ is given by the space of all possible subsets of $Y^{(N,k)}$, the set of the first $k$-maxima positions
\begin{equation}\label{RemStru11}
Y^{(N,k)}=\{Y_i^{(N)},i=1,2,3,\cdots,(N-1)\wedge k \}\cup \{0,1\}.
\end{equation}
Whenever $k\geq N$ we write simply $Y^{(N)}$. 

\begin{theorem}\label{thmEL}
For any fixed $k, N\in \mathbb{N}\cup\{\infty\}$ and for a.e. realization of the disorder $w^{(N)}$, the function $\sDtr:\X\to \mathbb{R}_+$ is upper semi-continuous (u.s.c.).
\end{theorem}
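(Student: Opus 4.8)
The plan is to reduce the statement to a single elementary fact about the Hausdorff topology: if $I_m \to I$ in $(\X, d_H)$ and $y \in I$, then there is a sequence $y_m \in I_m$ with $y_m \to y$. Fix a realization of the disorder $w^{(N)}$ (for $N$ finite this is automatic; for $N = \infty$ we work on the almost-sure event of Remark \ref{rem:aboutMi} where $\sum_i M_i^{(\infty)} < \infty$, and we may also assume the positions $(Y_i^{(\infty)})_i$ are pairwise distinct and distinct from $0,1$, which also holds a.s.\ since the $Y_i^{(\infty)}$ are i.i.d.\ uniform). We must show that for every convergent sequence $I_m \to I$ in $\X$ one has $\limsup_{m} \sDtr(I_m) \le \sDtr(I)$.

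First I would treat the genuinely finite case, namely when the number of summands $(N-1)\wedge k$ is finite. Then $\sDtr(J) = \sum_{i=1}^{(N-1)\wedge k} M_i^{(N)} \ind(Y_i^{(N)} \in J)$ is a finite sum of indicators, so it suffices to show each map $J \mapsto \ind(y \in J)$ from $(\X, d_H)$ to $\R$ is u.s.c., i.e.\ that $\{J \in \X : y \notin J\}$ is open, equivalently that $\{J : y \in J\}$ is closed. This is immediate from the elementary fact above: if $I_m \to I$ and $y \in I_m$ for all $m$ (so the indicator equals $1$), then taking $y_m = y$ shows $d(y, I_m) = 0$, hence $d(y, I) = |d(y,I_m) - d(y,I)| \le d_H(I_m, I) \to 0$, and since $I$ is closed, $y \in I$; thus the indicator of the limit is also $1$. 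A finite sum of u.s.c.\ functions (with nonnegative, indeed any, coefficients — here $M_i^{(N)} > 0$) is u.s.c., which handles $k$ finite, and also $N$ finite with any $k$.

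The remaining case is $N = \infty$ and $k = \infty$, where $\sDtr = \sC = \sum_{i=1}^\infty M_i^{(\infty)} \ind(Y_i^{(\infty)} \in \cdot)$ is an infinite sum. Here u.s.c.\ is \emph{not} a general consequence of termwise u.s.c., so this is the main obstacle. I would handle it by a uniform-tail truncation: given $\varepsilon > 0$, pick $k$ so large that $\sum_{i > k} M_i^{(\infty)} < \varepsilon$ (possible by Remark \ref{rem:aboutMi}). Then for every $J \in \X$,
\begin{equation}\label{eq:trunc-bound}
\sCtr[(k)](J) \le \sC(J) \le \sCtr[(k)](J) + \varepsilon,
\end{equation}
where $\sCtr[(k)]$ is the $k$-truncated function, already known to be u.s.c.\ by the finite case. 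Wait — the macro is $\sCtr$, so I write it as such; let me restate: with $\sigma := \sC$ and $\sigma^{(k)} := \hat\sigma_\infty^{(k)}$ one has $\sigma^{(k)} \le \sigma \le \sigma^{(k)} + \varepsilon$ pointwise. Hence for $I_m \to I$,
\begin{equation}
\limsup_m \sC(I_m) \le \limsup_m \big(\sigma^{(k)}(I_m) + \varepsilon\big) = \limsup_m \sigma^{(k)}(I_m) + \varepsilon \le \sigma^{(k)}(I) + \varepsilon \le \sC(I) + \varepsilon,
\end{equation}
using u.s.c.\ of $\sigma^{(k)}$ in the middle inequality and \eqref{eq:trunc-bound} at the ends. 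Letting $\varepsilon \downarrow 0$ gives $\limsup_m \sC(I_m) \le \sC(I)$, i.e.\ $\sC$ is u.s.c.

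I expect the only subtlety to be the bookkeeping in the definition of $\sCtr$ versus $\sDtr$ with $N = \infty$ (they coincide), and making sure the truncation index $k$ is chosen independently of the point $J$ — which it is, since the tail bound $\sum_{i>k} M_i^{(\infty)} < \varepsilon$ dominates $\sum_{i>k} M_i^{(\infty)} \ind(Y_i^{(\infty)} \in J)$ uniformly in $J$. The finite-case lemma (closedness of $\{J : y \in J\}$ in $d_H$) is the real engine and is short; everything else is the standard "uniform limit of u.s.c.\ is u.s.c." argument adapted to a one-sided ($\limsup$) setting.
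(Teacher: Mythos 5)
Your proof is correct and follows essentially the same two-step strategy as the paper: handle the finite case by observing that membership of a fixed point is a closed condition in the Hausdorff metric, and handle the $N=k=\infty$ case by uniform approximation with the truncated energies (using $\sum_i M_i^{(\infty)}<\infty$ a.s.). The only cosmetic difference is that you decompose $\sDtr$ into the individual indicators $\ind(Y_i^{(N)}\in\cdot)$ and invoke closure under finite positive combinations, whereas the paper treats all $(N-1)\wedge k$ maxima positions simultaneously by separating $I_0$ from those it misses by a uniform $\eta>0$; the underlying fact is identical. (One small quibble: your parenthetical ``indeed any'' coefficients is not right --- upper semi-continuity is destroyed by negative scaling --- but since $M_i^{(N)}>0$ this does not affect the argument.)
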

\begin{remark}\label{remHM1}
 For sake of clarity let us underline that in the Hausdorff metric, cf. \eqref{eqHaus11}, $d_H(A,B)<\varepsilon$ if and only if for any $x_1\in A$ there exists $x_2\in B$ such that $|x_1-x_2|<\varepsilon$ and vice-versa switching the role of $A$ and $B$.
 \end{remark}
 
\begin{proof}
Let us start to consider the case $N\wedge k<\infty$.
For a given $I_0\in \X$, let $\iota$ be the set of all points of $Y^{(N,k)}$ which are not in $I_0$. Since $Y^{(N,k)}$ has a finite number of points there exists $\eta>0$ such that $d(z,I_0)>\eta$ for any $z\in \iota$. Then if $I\in \X$ is sufficiently close to $I_0$, namely $d_H(I,I_0)\leq \eta/2$, then $d(z, I)>\eta/2>0$ for any $z\in\iota$.
Therefore, among the first $k$-maxima, $I$ can at most hit only the points hit by $I_0$, namely $\sDtr(I)\leq \sDtr(I_0)$ and this concludes the proof of this first part.

For the case $N\wedge k=\infty$ it is enough to observe that the difference between the truncated Energy and the original one
 \begin{equation}\label{eql11}
   \displaystyle\sup_{I\in \X}\left|\sC(I)-\sCtr(I)\right|=
   \displaystyle\sup_{I\in \X} \left|\displaystyle\sum_{i=1}^{\infty} M_i^{(\infty)} \mathds{1}(Y_i^{(\infty)}\in I)-\displaystyle\sum_{i=1}^{k} M_i^{(\infty)} \mathds{1}(Y_i^{(\infty)}\in I)\right|\leq \sum_{i>k} M_i^{(\infty)},
  \end{equation}
converges to $0$ as $k\to\infty$ because $\sum_{i} M_i^{(\infty)}$ is a.s. finite, cf. Remark \ref{rem:aboutMi}. Therefore the sequence of u.s.c. functions $\sCtr$ converges uniformly to $\sC$ and this implies the u.s.c. of the limit.
\end{proof}

\subsection{The entropy}\label{THeentropySubsection}
Let us define
\begin{equation}\label{eqDensity1}
\Xden=\{I\in\mathrm{X} : |I|<\infty\}
\end{equation}
and remark that it is a countable dense subset of $\X$ with respect to the Hausdorff Metric.

\medskip

 For a given set $I=\{x_0<x_1<\cdots<x_{\ell}\}\in \Xden$ we define the \emph{entropy} as 
 \begin{equation}\label{defentropyfiniteset}
  E(I)=\sum_{k=1}^{\ell}(x_i-x_{i-1})^{\gamma}.
 \end{equation}
 \begin{theorem}\label{lemmaEntr1}The following holds
 \begin{enumerate}
  \item \label{z1}  The entropy $E(\cdot)$ is strictly increasing with respect to the inclusion of finite sets, namely if $I_1,I_2\in \Xden$ and $I_1\subsetneq I_2$, then $E(I_2)>E(I_1)$,
  \item  \label{z2} The function $E:\Xden \to \mathbb{R}_+$ is lower semi continuous (l.s.c.).
 \end{enumerate}
 \end{theorem}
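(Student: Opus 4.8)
The plan is to prove the two statements by direct elementary arguments about the function $E$ on finite subsets of $[0,1]$.

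For part \eqref{z1}, it suffices to treat the case where $I_2$ is obtained from $I_1$ by adding a single point, since any strict inclusion $I_1\subsetneq I_2$ can be realised by a finite chain of such one-point additions and the claim will follow by transitivity. So write $I_1=\{x_0<\cdots<x_\ell\}$ and suppose $I_2=I_1\cup\{y\}$ with $x_{j-1}<y<x_j$ for some $j$. Then $E(I_2)-E(I_1)=(y-x_{j-1})^\gamma+(x_j-y)^\gamma-(x_j-x_{j-1})^\gamma$, and I would show this is strictly positive. Setting $a=y-x_{j-1}>0$ and $b=x_j-y>0$, this amounts to the strict subadditivity inequality $a^\gamma+b^\gamma>(a+b)^\gamma$ for $a,b>0$ and $\gamma\in(0,1)$; this is standard (for instance, $t\mapsto t^\gamma$ is strictly concave with value $0$ at the origin, so $a^\gamma=(\tfrac{a}{a+b}(a+b))^\gamma>\tfrac{a}{a+b}(a+b)^\gamma$ and similarly for $b$, and summing gives the claim). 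This proves \eqref{z1}.

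For part \eqref{z2}, I would check lower semicontinuity directly at an arbitrary $I_0=\{x_0<x_1<\cdots<x_\ell\}\in\Xden$: given a sequence $I_n\in\Xden$ with $d_H(I_n,I_0)\to 0$, I must show $\liminf_n E(I_n)\ge E(I_0)$. The key observation is that if $d_H(I_n,I_0)<\varepsilon$ with $\varepsilon$ smaller than half the minimal gap of $I_0$, then by Remark \ref{remHM1} each $x_i\in I_0$ has some point $x_i^{(n)}\in I_n$ with $|x_i^{(n)}-x_i|<\varepsilon$, and these $\ell+1$ points are distinct and ordered in the same way as the $x_i$. Discarding all other points of $I_n$ and using part \eqref{z1} (monotonicity under inclusion, applied to the subset $\{x_0^{(n)},\dots,x_\ell^{(n)}\}\subseteq I_n$), we get $E(I_n)\ge \sum_{i=1}^\ell (x_i^{(n)}-x_{i-1}^{(n)})^\gamma$. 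Since $x_i^{(n)}\to x_i$ for each $i$, the right-hand side converges to $\sum_{i=1}^\ell(x_i-x_{i-1})^\gamma=E(I_0)$ by continuity of $t\mapsto t^\gamma$, giving $\liminf_n E(I_n)\ge E(I_0)$.

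I expect the only mild subtlety — and hence the step to state carefully — to be the bookkeeping in \eqref{z2}: one must be sure that the matched points $x_i^{(n)}$ are genuinely distinct and in the correct order (so that the terms $(x_i^{(n)}-x_{i-1}^{(n)})^\gamma$ make sense and monotonicity applies), which is exactly where the smallness of $\varepsilon$ relative to the minimal inter-point gap of $I_0$ is used; the endpoints $x_0=0$ and $x_\ell=1$ require no special treatment since $I_n\in\X$ already contains $0$ and $1$. Everything else is routine concavity of $t\mapsto t^\gamma$.
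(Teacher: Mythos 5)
Your proof is correct and takes essentially the same route as the paper's: part (1) by reduction to a one-point addition combined with the strict subadditivity $a^\gamma+b^\gamma>(a+b)^\gamma$, and part (2) by extracting from $I_n$ a subset $\{x_0^{(n)},\dots,x_\ell^{(n)}\}$ that converges pointwise to $I_0$ and invoking the monotonicity from part (1). Your justification of subadditivity via strict concavity of $t\mapsto t^\gamma$ at the origin is a minor expository variant of the same fact.
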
 
 
 \begin{proof}
To prove (\ref{z1}) let us note that if $I_2=\{0, a_1,x,a_2, 1\}$ and $I_2=\{0, a_1,a_2, 1\}$,  with $0\leq a_1<x<a_2\leq 1$ then $E(I_2)-E(I_1)= (x-a_1)^{\gamma}+(a_2-x)^{\gamma}-(a_2-a_1)^{\gamma}>0$ because $\gamma<1$, thus $a^{\gamma}+b^{\gamma}>(a+b)^{\gamma}$ for any $a,b>0$. The claim for the general case follows by a simple induction argument.

To prove (\ref{z2}) we fix $I_0\in \Xden$ and we show that if $(I_n)_n$ is a sequence of finite set converging (in the Hausdorff metric) to $I_0$, then it must be $\liminf_{n\to\infty} E(I_n)\geq E(I_0)$ and by the arbitrariness of the sequence the proof will follow. 

Let $I_0\in\Xden$ be fixed and let us observe that if we fix $\varepsilon>0$ small (precisely smaller than the half of the minimum of the distance between the points of $I_0$), then by Remark \ref{remHM1} any set $I$ for which $d_H(I,I_0)<\varepsilon$ must have at least the same number of points of $I_0$, i.e. $|I|\geq |I_0|$. In such a way if $(I_n)$ is a sequence of finite sets converging to $I_0$, then for any $n$ large enough we can pick out a subset $I_n'$ of $I_n$ with the same number of points of $I_0$ such that $(I_n')_n$ converges to $I_0$. Necessary the points of $I_n'$ converge to the ones of $I_0$, so that $\lim_{n\to\infty}E(I_n')=E(I_0)$.
By using Part (\ref{z1}) we have that for any $n$, $E(I_n)\geq E(I_n')$, so that $\liminf_{n\to\infty}E(I_n)\geq E(I_0)$ and the proof follows.
\end{proof}

We are now ready to define the entropy of a generic set $I\in \X$. The goal is to obtain an extension which conserves the properties of the entropy $E$ on $\Xden$, cf. Theorem \ref{lemmaEntr1}. 
This extension is not trivial because $E$ is strictly l.s.c., namely given $I\in \Xden$ it is always possible to find two sequences $(I_N^{(1)})_N, (I_N^{(2)})_N\in \Xden$ converging to $I$ such that $\lim_{N\to\infty}E(I_N^{(1)})=E(I)$ and $\lim_{N\to\infty}E(I_N^{(2)})=\infty$. 
For instance let us consider the simplest case, when $I=\{0,1\}$. 
Then we may consider $I_N^{(1)}\equiv I$ for any $N$, so that $E(I_N^{(1)})\equiv E(\{0,1\})$, and $I_N^{(2)}$ the set made by $2N$ points such that the first $N$ are equispaced in a neighborhood of $0$ of radius $N^{-\varepsilon}$ and the others $N$ in a neighborhood of $1$ always of radius $N^{-\varepsilon}$, with $\varepsilon=\varepsilon(\gamma)$ small.
 Then $I_N^{(2)}\to I$ as $N\to \infty$ and $E(I_N^{(2)})=2N \cdot 1/N^{\gamma(1+\varepsilon)}+(1-2/N^{\varepsilon})^{\gamma}=O(N^{1-\gamma(1+\varepsilon)}) \to\infty$ as $N\to \infty$ if $\varepsilon < (1-\gamma)/\gamma$.

\smallskip

In order to avoid this problem for $I\in \X$ we define
\begin{equation}\label{eqent2}
 \bar{E}(I)=\liminf_{J\to I, J\in \Xden} E(J).
\end{equation}
Let us stress that $\bar{E}$ is nothing but the smallest l.s.c. extension of $E$ to the whole space $\X$, see e.g. \cite[Prop. 5 TG IV.31]{B71}.
\begin{theorem}\label{ThmentropyCon1}The following hold:
 \begin{enumerate}
  \item \label{zz1} The function $\bar{E}(\cdot)$ is increasing with respect to the inclusion of sets, namely if $I_1,I_2\in \X$ with $I_1\subset I_2$ then $\bar{E}(I_2)\geq \bar{E}(I_1)$. 
  \item \label{zz2} The function $\bar{E}:\X\to \mathbb{R}_+$ is l.s.c. and $\bar{E}\mid_{\Xden}\equiv E$.  
 \end{enumerate}
\end{theorem}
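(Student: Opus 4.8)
The plan is to prove the two properties of $\bar E$ directly from its definition \eqref{eqent2} as a $\liminf$ over finite approximating sets, leaning on Theorem \ref{lemmaEntr1} for the corresponding properties of $E$ on $\Xden$, together with the abstract fact that $\bar E$ is the largest l.s.c.\ minorant of (the l.s.c.\ function) $E$, i.e.\ its l.s.c.\ regularization. First I would dispose of the easy bookkeeping: since $E$ is itself l.s.c.\ on $\Xden$ by Theorem \ref{lemmaEntr1}\eqref{z2}, for $I\in\Xden$ the constant sequence $J\equiv I$ together with any other sequence $J\to I$ shows $\liminf_{J\to I}E(J)=E(I)$, so $\bar E\mid_{\Xden}\equiv E$. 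The l.s.c.\ of $\bar E$ on all of $\X$ is the standard statement that a $\liminf$-regularization is l.s.c.; I would either cite \cite[Prop.\ 5, TG IV.31]{B71} as already flagged below \eqref{eqent2}, or give the two-line diagonal argument: if $I_n\to I$ and $J_k^{(n)}\to I_n$ is a sequence in $\Xden$ realizing $\bar E(I_n)$ up to $1/n$, then a suitable diagonal subsequence $J_{k(n)}^{(n)}\to I$ in $(\X,d_H)$ witnesses $\bar E(I)\le \liminf_n \bar E(I_n)$.

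The substantive point is monotonicity \eqref{zz1}: if $I_1\subset I_2$ in $\X$, then $\bar E(I_1)\le \bar E(I_2)$. The natural strategy is to take a sequence $J_2^{(n)}\in\Xden$ with $J_2^{(n)}\to I_2$ and $E(J_2^{(n)})\to \bar E(I_2)$, and to manufacture from it a sequence $J_1^{(n)}\in\Xden$ with $J_1^{(n)}\to I_1$ and $E(J_1^{(n)})\le E(J_2^{(n)})$ for all $n$; then $\bar E(I_1)\le \liminf_n E(J_1^{(n)})\le \lim_n E(J_2^{(n)})=\bar E(I_2)$. To build $J_1^{(n)}$ I would start from $J_2^{(n)}$ and \emph{delete} points: concretely, discard from $J_2^{(n)}$ every point whose distance to $I_1$ exceeds $\varepsilon_n$, where $\varepsilon_n\downarrow 0$ is chosen slowly enough (say $\varepsilon_n=d_H(J_2^{(n)},I_2)^{1/2}+1/n$) that what remains still Hausdorff-converges to $I_1$ — this uses that $I_1\subset I_2$, so points of $J_2^{(n)}$ near $I_1$ are plentiful enough to approximate $I_1$, while $\{0,1\}\subset I_1$ are always retained. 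Deleting points only decreases $E$ by the strict monotonicity Theorem \ref{lemmaEntr1}\eqref{z1} (equivalently superadditivity $a^\gamma+b^\gamma>(a+b)^\gamma$), so $E(J_1^{(n)})\le E(J_2^{(n)})$, as desired. One must check $J_1^{(n)}$ is nonempty and contains $0,1$ so that it lies in $\Xden\subset\X$; this is automatic from retaining the endpoints.

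The main obstacle is the verification that the pruned sets $J_1^{(n)}$ genuinely converge to $I_1$ in $d_H$: one direction ($\sup_{x\in J_1^{(n)}} d(x,I_1)\le \varepsilon_n\to 0$) is immediate by construction, but the other direction — that every point of $I_1$ is within $o(1)$ of some retained point of $J_2^{(n)}$ — requires an argument. Here I would use $I_1\subset I_2$ and $d_H(J_2^{(n)},I_2)\to 0$: given $x\in I_1\subset I_2$ there is $y_n\in J_2^{(n)}$ with $|x-y_n|\le d_H(J_2^{(n)},I_2)\le \varepsilon_n^2$, and since $x\in I_1$ forces $d(y_n,I_1)\le |y_n-x|\le \varepsilon_n^2<\varepsilon_n$, the point $y_n$ survives the pruning; hence $d(x,J_1^{(n)})\le\varepsilon_n^2\to 0$, uniformly in $x\in I_1$. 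This closes the argument. A cosmetic subtlety worth a remark is that $\bar E$ may now take the value $+\infty$ (as the $(I_N^{(2)})$ example preceding the theorem shows), so "$\bar E:\X\to\mathbb R_+$" should be read with $\mathbb R_+\cup\{\infty\}$ or $[0,\infty]$ in mind; the monotonicity and l.s.c.\ statements are unaffected by this.
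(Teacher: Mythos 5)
Your proof is correct and uses essentially the same idea as the paper's: prune a finite set approximating the larger $I_2$ down to one approximating the smaller $I_1$ (the paper realizes this via a ``minimal sub-cover'' $J'\subset I'$ of $J$ by $\delta$-balls) and then invoke the strict monotonicity of $E$ on $\Xden$ from Theorem \ref{lemmaEntr1}\eqref{z1}; the $\varepsilon$--$\delta$ versus sequential bookkeeping is cosmetic. One small imprecision worth noting: the paper's definition \eqref{eqent2bis.1} explicitly excludes $J=I$ from the $\liminf$, so the constant sequence does not directly yield $\bar E\mid_{\Xden}\le E$; instead, approximate $I\in\Xden$ by slightly perturbed copies $J_\delta\neq I$ and use continuity of $\{x_0<\cdots<x_\ell\}\mapsto\sum_i(x_i-x_{i-1})^\gamma$ in the coordinates.
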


\begin{remark}
 To be more clear
 we recall that 
 \begin{equation}\label{eqent2bis.1}
  \bar{E}(I)= \liminf_{J\to I, J\in \Xden} E(J):=\sup\limits_{\delta> 0}\left[\inf\left\{ E(J) : J\in B_H(I,\delta)\cap \Xden\backslash \{I\}\right\}\right],
 \end{equation}
where $B_H(I,\delta)$ denotes the disc of radius $\delta$ centered on $I$ in the Hausdorff Metric. 

\smallskip

If $ \bar{E}(I)\in \mathbb{R}$ such definition is equivalent to say 
\begin{enumerate}
\item[(a)] \label{H2} For any $\varepsilon>0$ and for any $\delta>0$ there exists $J\in  B_H(\delta,I)\cap \Xden\backslash \{I\}$ such that $\bar{E}(I)+\varepsilon>E(J)$.
 \item [(b)] \label{H1} For any $\varepsilon>0$ there exists $\delta_0>0$ such that for any $J\in  B_H(\delta_0,I)\cap \Xden\backslash \{I\}$, $E(J)>\bar{E}(I)-\varepsilon$.
\end{enumerate}
Note that ($\textrm{a}$) expresses the property to be an  infimum, while ($\textrm{b}$) corresponds to be a  supremum.
\end{remark}
\begin{proof}[Proof of Theorem \ref{ThmentropyCon1}]
We have only to prove (\ref{zz1}). 
Let $I,J\in \X$ such that $J\subset I$. If $\bar{E}(I)=\infty$ there is nothing to prove, therefore we can assume that $\bar{E}(I)\in\mathbb{R}$. 

\smallskip

Let us fix $\varepsilon>0$ and $\delta>0$ (which will be chosen in the sequel). By ($\textrm{a}$) there exists $I'\in \Xden$ such that $\bar{E}(I)+\varepsilon\geq E(I')$ and $d_H(I,I')<\delta$. By the definition of the Hausdorff metric, the family of discs of radius $\delta$ indexed by $I'$ --- $(B(x,\delta))_{x\in I'}$ --- covers $I$ and thus also $J$. Therefore if $J'\subset I'$ is the minimal cover of $J$ obtained from $I'$, i.e. $J':=\min \{L\subset I': J\subset \cup_{x\in L}B(x,\delta)\}$, then it must hold that $d_H(J,J')<\delta$. By Theorem \ref{lemmaEntr1} it follows that $E(I')\geq  E(J')$ and thus $\bar{E}(I)+\varepsilon\geq E(J')$. Let us consider $\bar{E}(J)$ and take $\delta_0>0$ as prescript in ($\textrm{b}$), then as soon as $\delta<\delta_0$, it must hold that $E(J')\geq \bar{E}(J)-\varepsilon$ and this concludes the proof.
\end{proof}

From now on in order to simplify the notation we use $E$ instead of $\bar{E}$ to indicate the function $E$ defined on all $\X$.

 \begin{corollary}\label{CorLSC1}
Let $I\in \X$ such that $E(I)<\infty$. Let $x\notin I$, then $E(I\cup \{x\})>E(I)$. It follows that the function $E $ is strictly increasing whenever it is finite: if $I \subsetneq J$ and $E(I) < \infty$, then $E(I) < E(J)$.
\end{corollary}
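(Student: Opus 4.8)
The plan is to reduce the first assertion to a statement about approximating sequences in $\Xden$, exploiting the characterization \eqref{eqent2bis.1} of $E=\bar E$ together with the strict monotonicity of the entropy on finite sets (Theorem \ref{lemmaEntr1}(\ref{z1})). Fix $I\in\X$ with $E(I)<\infty$ and a point $x\notin I$. Since $I$ is closed, there is $r>0$ with $B(x,2r)\cap I=\varnothing$. First I would handle the two easy structural cases: if $x$ is isolated in $I\cup\{x\}$, then any $J\in\Xden$ close to $I\cup\{x\}$ must contain a point in $B(x,r)$, and removing from $J$ all points lying in $B(x,r)$ produces a set $J'\subset J$ with $J'\in\Xden$, $d_H(J',I)$ small, and (by Theorem \ref{lemmaEntr1}(\ref{z1})) $E(J')< E(J)$; moreover $E(J)-E(J')$ is bounded below by a fixed positive gap, because the two arcs of $J$ adjacent to the removed cluster get merged into one arc of length at least $2r$, and $a^\gamma+b^\gamma-(a+b)^\gamma$ is bounded below on the region $a,b\le 1$, $a+b\ge 2r$. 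Taking $\liminf$ over $J\to I\cup\{x\}$ on the left and recognizing that the resulting $J'$ range over an approximating family of $I$ on the right yields $E(I\cup\{x\})\ge E(I)+\kappa(r)$ for some $\kappa(r)>0$, hence strict inequality.

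The remaining case is when $x$ is an accumulation point of $I$, i.e. $x\in\overline{I}\setminus I$ is impossible since $I$ is closed — so in fact $x\notin I$ with $I$ closed already forces $x$ to have a neighborhood disjoint from $I$, and the single case above suffices. (Concretely: $d(x,I)>0$ because $I$ is compact, so the argument of the previous paragraph applies verbatim with $2r:=d(x,I)$.) Thus the first assertion of the corollary is established with a quantitative gap depending only on $d(x,I)$ and $\gamma$.

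For the second assertion, suppose $I\subsetneq J$ with $E(I)<\infty$. Pick any $x\in J\setminus I$; then by Theorem \ref{ThmentropyCon1}(\ref{zz1}) (monotonicity of $E$ under inclusion) we have $E(J)\ge E(I\cup\{x\})$, and by the first part $E(I\cup\{x\})>E(I)$, so $E(J)>E(I)$. I would also note $E(I\cup\{x\})<\infty$ is guaranteed by the finite quantitative bound above, so the chain of inequalities makes sense even when $E(J)=\infty$.

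The main obstacle I anticipate is the bookkeeping in the first paragraph: one must verify carefully that the "pruned" sets $J'$ obtained by deleting the cluster of $J$ near $x$ genuinely form a family that approximates $I$ in the Hausdorff metric (so that their $\liminf$ of entropies is $\ge E(I)$ by \eqref{eqent2bis.1}(b)), and simultaneously that the entropy drop $E(J)-E(J')$ does not shrink to $0$ along the sequence. The uniform lower bound on $a^\gamma+b^\gamma-(a+b)^\gamma$ away from $a\wedge b=0$ is exactly what rescues this, but its interaction with the approximation — in particular ensuring the merged arc of $J'$ has length bounded away from $0$ uniformly — is the delicate point that needs the separation $d(x,I)>0$ in an essential way.
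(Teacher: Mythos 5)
Your decomposition of finite $J$ near $I\cup\{x\}$ into $J=D\cup C$ (with $D$ close to $I$ and $C$ the cluster near $x$), pruning $C$ to get $J'=D$, invoking Theorem~\ref{lemmaEntr1}~(\ref{z1}) for the strict drop, and passing to the $\liminf$ via \eqref{eqent2bis.1}, is the same structure the paper uses: there one writes $A=D\cup C$ and compares $E(A)$ with $E(D)$. Your handling of the second assertion via Theorem~\ref{ThmentropyCon1}~(\ref{zz1}) is also correct.

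There is, however, a genuine gap in the step you yourself flag as the crux. The map $(s,t)\mapsto s^\gamma+t^\gamma-(s+t)^\gamma$ is \emph{not} bounded below by a positive constant on $\{s,t\leq 1,\ s+t\geq 2r\}$: letting $s\to 0^{+}$ with $t=2r$ fixed drives it to $0$. Later you write ``away from $s\wedge t=0$'', which is the correct hypothesis, but it contradicts the region stated earlier and is never verified. The verification is both available and necessary: since $B(x,2r)\cap I=\emptyset$ and $d_H(J,I\cup\{x\})<\varepsilon<r/2$, the rightmost point $r_{D'}$ of $D$ below $x$ satisfies $r_{D'}\leq x-2r+\varepsilon$, while the leftmost point $l_C$ of $C$ satisfies $l_C\geq x-r$, so the left adjacent arc has length $\geq r-\varepsilon>r/2$, and symmetrically on the right. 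On the compact set $\{s,t\in[r/2,1],\ s+t\leq 1\}$ the continuous, strictly positive function $s^\gamma+t^\gamma-(s+t)^\gamma$ does attain a positive minimum, giving your $\kappa(r)$. Two further points to record explicitly: the merged arc in $J'$ has length $s+t+\mathrm{diam}(C)$, not $s+t$, but this is harmless because $c\mapsto s^\gamma+t^\gamma+c^\gamma-(s+t+c)^\gamma$ is increasing in $c\geq 0$ (so $c=0$ is the worst case, and the internal arcs of $C$ contribute at least $\mathrm{diam}(C)^\gamma$ by Theorem~\ref{lemmaEntr1}~(\ref{z1})); and one must confirm that the pruned sets $J'$ converge to $I$ in $d_H$ so that $\liminf E(J')\geq E(I)$ follows from \eqref{eqent2bis.1}. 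With these repairs your proof coincides in substance with the paper's, which only sharpens the estimate by letting $\varepsilon\to 0$ to obtain the exact constant $(x-a)^\gamma+(b-x)^\gamma-(b-a)^\gamma$, with $a,b\in I$ the nearest neighbours of $x$.
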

\begin{proof} Let $I\in \X$ and let us assume that $E(I)<\infty$. Note that $x\notin I$ means that there exists $\delta>0$ such that $I\cap (x-\delta,x+\delta)=\emptyset$ because $I$ is closed. 
We consider $a,b$ the left and right closest points to $x$ in $I$.
Then the proof will follow by proving that
\begin{equation}\label{eq:corHTEntro1.1}
E(I\cup\{x\})-E(I)\geq (x-a)^{\gamma}+(b-x)^{\gamma}-(b-a)^{\gamma},
\end{equation}
because the r.h.s. is a quantity strictly bigger than $0$, since $\gamma<1$.

\smallskip

To prove \eqref{eq:corHTEntro1.1}, 
we show that the result is true for any finite set in an $\varepsilon$-neighborhood (in the Hausdorff metric) of $I\cup\{x\}$ and then we deduce the result for $E(I)$, by using its definition \eqref{eqent2}. Let us start to observe that for any $\varepsilon$ small enough, if $A$ is a set in an $\varepsilon$-neighborhood of $I\cup\{x\}$, then it can be written as union of two disjoint sets $D,C$ where $D$ is in a $\varepsilon$-neighborhood of $I$ and $C$ in a $\varepsilon$-neighborhood of $\{x\}$. In particular this holds when $A$ is a finite set, and thus 
$$
B_H(I\cup\{x\},\varepsilon)\cap \Xden =\{A\in \Xden\, :\, A=D\cup C,\, D\in B_H(I,\varepsilon) \textrm{ and } C\in B_H(\{x\},\varepsilon)\}.
$$
Furthermore, we can partition any such $D$ in two disjoint sets 
$D'=D\cap [0,x)$ and $D''=(x,1]$.

\smallskip

 For a fixed set $S\in\X$ 
 , let $l_S$ be its smallest point bigger than $0$ and $r_S$ its biggest point smaller than $1$. By using this notation it follows from the definition of the entropy of a finite set (\ref{defentropyfiniteset}) that for any such $A\in B_H(I\cup\{x\},\varepsilon)\cap \Xden$ we have
 
\begin{equation}\label{eq:corHTEntro1.2}
 E(A)=E(D\cup C) =E(D)-(l_{D''}-r_{D'})^{\gamma}+ E(C\cup\{0,1\})-l_C^{\gamma}-(1-r_C)^{\gamma}+(l_C-r_{D'})^{\gamma} +(l_{D''}-r_C)^{\gamma}.
\end{equation}

By Theorem \ref{lemmaEntr1} we can bound $E(C\cup\{0,1\})\geq l_C^{\gamma}+(1-r_C)^{\gamma}+(r_C-l_C)^{\gamma}$. Putting such expression in \eqref{eq:corHTEntro1.1} we obtain

\begin{equation}\label{ze3}
\begin{split}
&E(A)=E(D\cup C)\geq \\
&\qquad E(D)-(l_{D''}-r_{D'})^{\gamma} +(l_C-r_{D'})^{\gamma} +(l_{D''}-r_C)^{\gamma}+ (r_C-l_C)^{\gamma}\geq E(D)+e(\varepsilon),
\end{split}
\end{equation}
where 
$$
e(\varepsilon)=\inf \{(l_C-r_{D'})^{\gamma} +(l_{D''}-r_C)^{\gamma}+ (r_C-l_C)^{\gamma}-(l_{D''}-r_{D'})^{\gamma}\}.
$$ 
Such $\inf$ is taken among all possible $D=D'\cup D'' \in B_H(I,\varepsilon)\cap \Xden$ and $C\in B_H(\{x\},\varepsilon)\})\cap \Xden$. 

Finally (\ref{ze3}) implies that  $\inf E(A)\geq \inf E(D) +e(\varepsilon)$, where the $\inf$ is taken among all possible $A=D\cup C\in B_H(I\cup\{x\},\varepsilon)\cap \Xden\backslash \{I\cup\{x\}\}$. By taking the limit for $\varepsilon\to 0$ we have $e(\varepsilon)\to (x-a)^{\gamma}+(b-x)^{\gamma}-(b-a)^{\gamma}$ and the result follows by \eqref{eqent2bis.1},  since the r.h.s. of (\ref{ze3}) is independent of $C$.
\end{proof}
\begin{proposition}
 For any $0\leq a < b\leq 1$ we have that $E([a,b])=\infty$.
\end{proposition}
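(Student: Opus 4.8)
The plan is to exhibit, for each $n$, a finite set $J_n \in \Xden$ with $d_H(J_n,[a,b]) \to 0$ and $E(J_n) \to \infty$, and then to observe that this forces $E([a,b])=\infty$ via the very definition \eqref{eqent2}. Here one must be a little careful: $\bar E$ is a \emph{liminf} over \emph{all} nearby finite sets, so producing one sequence with diverging entropy is not enough — as the discussion after \eqref{eqent2} makes explicit, one can always approximate a set by finite sets with small entropy. The correct statement to prove is therefore that \emph{every} sequence $J_n \to [a,b]$ in $\Xden$ has $E(J_n) \to \infty$; this is what makes the liminf infinite.

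First I would fix the geometric input. If $d_H(J,[a,b]) < \varepsilon$, then in particular every point of $[a,b]$ lies within $\varepsilon$ of some point of $J$, so the points of $J$ lying in $[a-\varepsilon,b+\varepsilon]$ form an $\varepsilon$-net of $[a,b]$; consequently $J$ must contain at least $m \geq (b-a)/(2\varepsilon)$ points inside $[a-\varepsilon, b+\varepsilon]$, and moreover one can enumerate a subcollection $a - \varepsilon \le y_0 < y_1 < \cdots < y_m \le b+\varepsilon$ of points of $J$ with consecutive gaps $y_i - y_{i-1} \le 2\varepsilon$ and total span $y_m - y_0 \ge b - a$. Now comes the elementary inequality that does the work: by concavity of $t \mapsto t^\gamma$ (equivalently superadditivity, already used repeatedly in the paper), for gaps $g_1,\dots,g_m$ with $g_i \le 2\varepsilon$ and $\sum g_i \ge b-a$ one has
\begin{equation}\label{eq:propEab}
\sum_{i=1}^m g_i^\gamma \;\ge\; \sum_{i=1}^m g_i \cdot (2\varepsilon)^{\gamma-1} \;=\; (2\varepsilon)^{\gamma-1}\sum_{i=1}^m g_i \;\ge\; (b-a)\,(2\varepsilon)^{\gamma-1},
\end{equation}
using $g_i^\gamma = g_i \cdot g_i^{\gamma-1} \ge g_i \cdot (2\varepsilon)^{\gamma-1}$ since $\gamma - 1 < 0$ and $g_i \le 2\varepsilon$.

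Next I would assemble these pieces. Since $E(J)$ is a sum of $\gamma$-th powers of \emph{all} consecutive gaps of $J$, and adding extra points only increases $E$ by Theorem \ref{lemmaEntr1}(\ref{z1}), we get $E(J) \ge \sum_{i=1}^m g_i^\gamma \ge (b-a)(2\varepsilon)^{\gamma-1}$ for every $J \in \Xden$ with $d_H(J,[a,b]) < \varepsilon$. Because $\gamma < 1$, the right-hand side tends to $+\infty$ as $\varepsilon \to 0$. Plugging this into the characterization \eqref{eqent2bis.1}, namely $\bar E([a,b]) = \sup_{\delta>0} \inf\{E(J): J \in B_H([a,b],\delta) \cap \Xden \setminus \{[a,b]\}\}$, the inner infimum is already $\ge (b-a)(2\delta)^{\gamma-1}$, hence the supremum over $\delta$ is $+\infty$. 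This gives $E([a,b]) = \infty$.

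The only genuinely delicate point is the bookkeeping in the geometric step: one needs that a set $J$ which is Hausdorff-close to $[a,b]$ genuinely has many points \emph{with controlled gaps} spanning essentially all of $[a,b]$, rather than, say, a few points near the endpoints and a single long jump across the middle — the latter is ruled out precisely because every interior point of $[a,b]$ must have a point of $J$ within $\varepsilon$. Making that "$\varepsilon$-net forces gaps $\le 2\varepsilon$" argument clean (and handling the endpoints $a,b$ versus $0,1$) is the main obstacle, but it is routine; everything else is the superadditivity of $t^\gamma$ that the paper has already been exploiting.
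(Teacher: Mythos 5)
Your proof is correct (modulo the routine $O(\varepsilon)$ bookkeeping at the endpoints that you rightly flag), but it is a genuinely different argument from the paper's. The paper's proof is a one-liner: since $\{0,1/N,\dots,1\}\subset[0,1]$, the monotonicity of $\bar E$ under inclusion (Theorem~\ref{ThmentropyCon1}\,(\ref{zz1}), together with $\bar E|_{\Xden}=E$; the paper's citation of Theorem~\ref{lemmaEntr1} there is a small slip, as that result concerns only finite sets) gives $E([0,1])\geq E(\{0,1/N,\dots,1\})=N^{1-\gamma}\to\infty$, with general $[a,b]$ handled via equispaced points inside $[a,b]$. You instead bypass Theorem~\ref{ThmentropyCon1} and argue directly from the $\liminf$ characterization \eqref{eqent2bis.1}: any finite $J$ with $d_H(J,[a,b])<\varepsilon$ contains a chain of points with consecutive gaps $\leq 2\varepsilon$ spanning essentially all of $[a,b]$, so $E(J)\geq (b-a-O(\varepsilon))(2\varepsilon)^{\gamma-1}$, which diverges as $\varepsilon\to 0$ since $\gamma<1$; in effect you re-derive the one instance of monotonicity of $\bar E$ that is needed. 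The trade-off is clear: the paper's proof is shorter given its prior infrastructure, while yours is self-contained and yields the strictly stronger quantitative statement that $\inf\{E(J):J\in B_H([a,b],\delta)\cap\Xden\}$ grows like $\delta^{\gamma-1}$. Two touch-ups to your sketch: the span is only $y_m-y_0\geq b-a-O(\varepsilon)$, not $\geq b-a$ exactly (the points of $J$ need not hit $a$ or $b$), and the ``$2\varepsilon$-gap'' bound is cleanest if you take consecutive points of $J$ both lying in $[a,b]$, whose gap-midpoint is then also in $[a,b]$ so the $\varepsilon$-net condition applies to it directly; neither changes the conclusion.
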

\begin{proof}Let us consider the case in which $a=0,b=1$, the other cases follow in a similar way.
 By Theorem \ref{lemmaEntr1} we have that $E([0,1])\geq E(\{0,1/N,\cdots,1\})=N^{1-\gamma}\uparrow\infty$ as $N\uparrow\infty$ because $\gamma<1$.
\end{proof}

\subsection{The Energy-entropy}\label{sec:EEdef1}
\begin{definition}\label{EEdef1}
For any $N,k\in\mathbb{N}\cup\{\infty\}$ and $\beta\in (0,\infty)$ we define, cf. \eqref{eqEntrxyz} and \eqref{eqent2},
\begin{equation}\label{eqVarEExyzbix}
 \Udtr{\beta}(I)=\beta \sDtr(I)-E(I).
\end{equation}
Note that $\Udtr{\beta}$ is upper semi-continuous on $(\X,d_H)$, a compact metric space, therefore its maximizer
\begin{equation}\label{eqVarEExyz}
 \udtr{\beta}=\max_{I\in\X} \Udtr{\beta}(I).
\end{equation}
is well defined.
\end{definition}
Whenever $k\geq N$ we will omit the  superscript $(k)$ from the notation.

\begin{theorem}\label{ThmExistence1}
For any $N,k\in\mathbb{N}\cup\{\infty\}$, $\beta>0$ and for a.e. realization of the disorder $w^{(N)}$, the maximum $\udtr{\beta}$ is achieved in only one set, i.e. the solution at
   \begin{equation}\label{eqVarEE}
 \Idtr{\beta}=\argmax_{I\in\X} \Udtr{\beta}(I)
\end{equation}
is unique.
Moreover for any $N\in\mathbb{N}$ we have that $\Idtr{\beta}\in \Xdi$.
\end{theorem}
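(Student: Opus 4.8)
The plan is to treat uniqueness and the finiteness statement $\Idtr{\beta}\in\Xdi$ separately, since they rely on different features of the disorder. Existence of a maximizer is already guaranteed by Definition \ref{EEdef1} (upper semi-continuity of $\Udtr{\beta}$ on the compact space $(\X,d_H)$, via Theorem \ref{thmEL} and Theorem \ref{ThmentropyCon1}), so only the two assertions remain.

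For uniqueness, I would argue by contradiction: suppose $I_1\neq I_2$ both maximize $\Udtr{\beta}$. The first observation is that any maximizer must be a finite set, hence lies in $\Xden$. Indeed, if $E(I)=\infty$ then $\Udtr{\beta}(I)=-\infty$ (the Energy $\sDtr$ is bounded above by $\sum_i M_i^{(N)}<\infty$, cf. Remark \ref{rem:aboutMi}), which is strictly worse than $\Udtr{\beta}(\{0,1\})$; and by the Proposition above, any $I$ containing an interval has $E(I)=\infty$, while any infinite set accumulates somewhere and a variant of that Proposition (or a direct estimate using $a^\gamma+b^\gamma>(a+b)^\gamma$) forces $E(I)=\infty$ as well. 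So both $I_1,I_2\in\Xden$. Now consider $I_\cup=I_1\cup I_2$ and $I_\cap=I_1\cap I_2$, both finite. A short combinatorial computation with the definition \eqref{defentropyfiniteset} gives the supermodularity inequality $E(I_\cup)+E(I_\cap)\leq E(I_1)+E(I_2)$, and in fact the inequality is strict as soon as $I_1\not\subset I_2$ and $I_2\not\subset I_1$, because then some interval of $I_\cap$ gets genuinely subdivided in both $I_1$ and $I_2$ by points on "opposite sides", and strict subadditivity of $t\mapsto t^\gamma$ kicks in; alternatively one reduces to Corollary \ref{CorLSC1} applied to the extra points. On the Energy side, since $\sDtr(I)=\sum_i M_i^{(N)}\ind(Y_i^{(N)}\in I)$ is additive over disjoint position sets, one has the identity $\sDtr(I_\cup)+\sDtr(I_\cap)=\sDtr(I_1)+\sDtr(I_2)$. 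Combining, $\Udtr{\beta}(I_\cup)+\Udtr{\beta}(I_\cap)\geq\Udtr{\beta}(I_1)+\Udtr{\beta}(I_2)$, and the inequality is strict unless $I_1\subset I_2$ or $I_2\subset I_1$. Strictness contradicts maximality of both, so we are reduced to the nested case $I_1\subsetneq I_2$. But then $I_2$ contains a point $x\notin I_1$, and repeatedly applying Corollary \ref{CorLSC1} gives $E(I_2)\geq E(I_1)+\kappa$ for an explicit $\kappa>0$ depending only on the gap of $I_1$ around $x$, whereas $\sDtr(I_2)-\sDtr(I_1)=\sum_{i:\,Y_i\in I_2\setminus I_1}M_i^{(N)}$. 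For $I_1$ and $I_2$ to both be maximizers we would need $\beta(\sDtr(I_2)-\sDtr(I_1))=E(I_2)-E(I_1)$ exactly, i.e. a specific linear combination of finitely many of the values $M_i^{(N)}$ equals a specific value determined by finitely many of the positions $Y_i^{(N)}$; by Assumption \ref{ASSdis} the law of $\omega_1$, hence of $b_N^{-1}\tilde M_i^{(N)}$, is atomless, so conditionally on the positions this event has probability zero, and a union bound over the (finitely many, for $N<\infty$; countably many in general) candidate pairs of subsets shows the bad set has measure zero. This is the step I expect to be the main obstacle: one has to set up the "no exact tie" argument uniformly enough — for finite $N$ it is a finite union bound and completely routine, but for $N=\infty$ one must be slightly careful that the relevant comparisons still involve only countably many configurations and that atomlessness of the $M_i^{(\infty)}=T_i^{-1/\alpha}$ (inherited from the exponential variables $T_i$) suffices.

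For the claim $\Idtr{\beta}\in\Xdi$ when $N\in\mathbb{N}$, note first that $\sDtr$ is supported on subsets of the finite set $Y^{(N,k)}$ (cf. \eqref{RemStru11}): adding to a set $I$ any point not in $Y^{(N,k)}$ leaves $\sDtr$ unchanged but strictly increases $E$ by Corollary \ref{CorLSC1}, hence strictly decreases $\Udtr{\beta}$. Therefore the maximizer is a subset of $Y^{(N,k)}\cup\{0,1\}$; since $N<\infty$, all positions $Y_i^{(N)}$ lie in $\{1/N,\dots,1-1/N\}$, so $\Idtr{\beta}\subset\{0,1/N,\dots,1\}$, i.e. $\Idtr{\beta}\in\Xdi$, which is exactly the assertion. (In particular this recovers the earlier identification $\Id{\hb_N}=\Idp{\beta_N}$ of \eqref{eqIdhat}.) The only thing to double-check here is that the $Y_i^{(N)}$ are a.s. distinct and distinct from $0,1$, which holds because $\omega$ is i.i.d. with atomless law so ties among the ordered statistics have probability zero; this is the same null set already discarded above.
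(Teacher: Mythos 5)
Your proof of the second assertion ($\Idtr{\beta}\in\Xdi$ when $N\in\mathbb{N}$) is correct and matches the paper's: any point of a candidate maximizer outside $Y^{(N,k)}$ contributes nothing to $\sDtr$ but strictly increases the entropy by Corollary~\ref{CorLSC1}, so the maximizer is a subset of $Y^{(N,k)}\subset\{0,1/N,\dots,1\}$. For uniqueness, however, your route is genuinely different from the paper's and it has a gap that is fatal when $N=k=\infty$. The opening claim that ``any maximizer must be a finite set'' because ``any infinite set accumulates somewhere'' and this ``forces $E(I)=\infty$'' is false: take $I=\{0\}\cup\{1-2^{-n}:n\geq 1\}\cup\{1\}$. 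This set is infinite and accumulates at $1$, yet its gaps have lengths $2^{-n}$, so $\bar E(I)=\sum_{n\geq 1}2^{-n\gamma}=2^{-\gamma}/(1-2^{-\gamma})<\infty$ by monotonicity and restriction of $\bar E$ to $\Xden$. The paper never claims that the continuum maximizer is finite --- it is explicitly left as a conjecture in Section~\ref{SecPersp}; the proof there only shows $\Ictr{\beta}=\overline{\Ictr{\beta}\cap Y^{(\infty)}}$, which permits accumulation points. A second, related, problem is your closing union bound over ``candidate pairs of subsets'': when $N=k=\infty$ the set $Y^{(\infty)}$ is countably infinite, so there are uncountably many subsets and the union bound as stated does not close the argument (you flag this yourself as a point needing care, but the fix is not ``countably many pairs'').

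The paper's argument avoids both problems at once and needs neither finiteness of the maximizer nor the submodularity inequality $E(I_1\cup I_2)+E(I_1\cap I_2)\leq E(I_1)+E(I_2)$ that you assert without proof. From $I=\overline{I\cap Y^{(N,k)}}$ (resp.\ $I\subset Y^{(N,k)}$), two distinct maximizers $I^1\neq I^2$ must differ at some \emph{single} position $Y_j^{(N)}$. Equating $\max_{I\ni Y_j^{(N)}}\Udtr{\beta}(I)=\max_{I\not\ni Y_j^{(N)}}\Udtr{\beta}(I)$ and peeling off the $j$-th charge gives $\beta M_j^{(N)}=$ a quantity depending only on $(M_i^{(N)})_{i\neq j}$ and $(Y_i^{(N)})_i$; conditionally on those, $M_j^{(N)}$ is atomless, so this tie has probability zero, and the union is taken over the countably many indices $j$, not over pairs of sets. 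Your submodularity reduction to the nested case is an interesting idea and could be made to work for $N\wedge k<\infty$ (where maximizers are indeed finite and the candidate sets are finitely many), but to cover the continuum case you would need to drop the finiteness claim, prove submodularity for $\bar E$ on all of $\X$, and replace the union over subsets by a union over single indices --- at which point you would essentially have rederived the paper's proof.
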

\begin{proof}
We claim that if $I$ is a solution of (\ref{eqVarEE}), then by using Corollary \ref{CorLSC1} 
\begin{align}
& I\subset  Y^{(N,k)}, \quad \textrm{if } N\wedge k <\infty,\label{eqUni1}\\
& I = \overline{I\cap Y^{(\infty)}}\quad \textrm{if } N\wedge k =\infty.\label{eqUni1infty}
\end{align}
 Indeed if $N\wedge k<\infty$ and (\ref{eqUni1}) fails, then there exists $x\in I$ such that $x\notin Y^{(N,k)}$ and this implies $\sDtr(I)=\sDtr(I-\{x\})$, but $E(I-\{x\})<E(I)$ by Corollary \ref{CorLSC1}. Therefore $\Udtr{\beta}(I-\{x\})>\Udtr{\beta}(I)=\udtr{\beta}$, a contradiction.
 The case $N\wedge k=\infty$ follows in an analogous way always by using Corollary \ref{CorLSC1}, because the set in the r.h.s. of (\ref{eqUni1infty}), which is a subset of $I$, has the same Energy as $I$ but smaller entropy.
 Now we are able to conclude the uniqueness, by following the same ideas used in \cite[Proposition 4.1]{HM07} or \cite[Lemma 4.1]{AL11}:
let $I^1$, $I^2$ be two subsets achieving the maximum. By using (\ref{eqUni1}) and (\ref{eqUni1infty}) if $I^1\neq I^2$, then there would exist $Y_j^{(N)}$ such that $Y_j^{(N)}\in {I_1}$ and $Y_j^{(N)}\notin {I_2}$. Note that if $N\wedge k =\infty$, by \eqref{eqUni1infty} we can assume $Y_j^{(N)}\in Y^{(\infty)}$, so that
 \begin{equation}\label{eqUniqueness}
  \displaystyle\max_{I:Y_j^{(N)}\in I} \Udtr{\beta}(I)=\displaystyle\max_{I:Y_j^{(N)}\notin I} \Udtr{\beta}(I)
 \end{equation}
 and this leads to
 \begin{align}\label{eqUniqueness2}
  \beta M_j^{(N)}=\beta \sDtr(Y_j^{(N)})=\displaystyle\max_{I:Y_j^{(N)}\notin I} \Udtr{\beta}(I)-
  \displaystyle\max_{I:Y_j^{(N)}\in I} \left\{\beta \displaystyle\sum_{k\neq j:Y_k^{(N)}\in I} M_k^{(N)}-E(I)\right\}.
 \end{align}
 Let us stress that the r.h.s. is independent of $M_j^{(N)}$, which is on the l.h.s. Then, by conditioning on the values of $(M_i^{(N)})_{i\in\mathbb{N},i\neq j}$ and $(Y_i^{(N)})_{i\in\mathbb{N}}$ we have that the l.h.s. has a continuous distribution, while the r.h.s. is a constant, so that the event in which the r.h.s. is equal to the l.h.s. has zero probability.
 By countable sub-additivity of the probability we have that a.s. ${I_1} = {I_2}$.
\end{proof}

\section{Convergence}\label{SecConver}
The aim of this section is to discuss the convergence of $\Id{\hb_N}$, \eqref{eqVarEE}, and $\ud{\hb_N}$, \eqref{eqVarEExyz}, when $\lim_{N\to\infty}\hb_N=\hb\in (0,\infty)$, cf. \eqref{eqhbxyw}.

\smallskip

For technical convenience we build a coupling between the discrete disorder and the continuum one. We recall that by (\ref{EqConvR1.2}) $w^{(N)}$ converges in distribution to $w^{(\infty)}$ on $\mathcal{S}$, a completely metrizable space. Therefore by using Skorokhod's representation Theorem (see \cite[theorem 6.7]{B99}) we can define $w^{(N)}$ and $w^{(\infty)}$ on a common probability space in order to assume that their convergence holds almost surely.
\begin{lemma}\label{RSko}
There is a coupling (that, with a slight abuse of notation, we still call $\mathbb{P}$) of the continuum model and the discrete one, under which 
\begin{equation}\label{eqCoupling}
 w^{(N)}=(M_i^{(N)}, Y_i^{(N)})_{i\in \mathbb{N}}\xrightarrow[\mathbb{P}-\textrm{a.s.}]{\mathcal{S}} w^{(\infty)}=(M_i^{(\infty)}, Y_i^{(\infty)})_{i\in\mathbb{N}}, \textrm{ as } N\to \infty.
\end{equation}
In particular for any fixed $\varepsilon, \delta>0$ and $k\in\mathbb{N}$ there exists $\hat{N}<\infty$ such that for all $N>\hat{N}$
\begin{align}\label{eqRSko}
 &\mathbb{P}\left(\displaystyle\sum_{j=1}^{(N-1)\wedge k}\mid M_j^{(N)}-M_j^{(\infty)}\mid <\varepsilon\right)>1-\delta, \\
  &\mathbb{P}\left(\displaystyle\sum_{j=1}^{(N-1)\wedge k}\mid Y_j^{(N)}-Y_j^{(\infty)}\mid <\varepsilon\right)>1-\delta, 
\end{align}
\end{lemma}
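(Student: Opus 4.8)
The statement to prove is Lemma \ref{RSko}, which has two parts: first, the existence of a coupling under which the almost sure convergence \eqref{eqCoupling} holds; second, the deduction of the two probability estimates \eqref{eqRSko} from this almost sure convergence. The plan is to obtain the coupling directly from Skorokhod's representation theorem, and then to read off the finite-sum estimates from the definition of the product topology on $\mathcal{S}$.

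First I would invoke \eqref{EqConvR1.2}, which already asserts that $w^{(N)} \tolaw w^{(\infty)}$ on $\mathcal{S} = (\R^2)^{\mathbb{N}}$. The key point to check is that $\mathcal{S}$, equipped with the product topology, is a separable and completely metrizable (i.e. Polish) space: this is standard, since a countable product of copies of $\R^2$ is Polish, with a compatible complete metric given for instance by $d(x,y) = \sum_{i\geq 1} 2^{-i} \bigl(|x_i - y_i| \wedge 1\bigr)$. Since $\mathcal{S}$ is Polish, Skorokhod's representation theorem (\cite[Theorem 6.7]{B99}) applies: there exists a probability space carrying random variables, which by abuse of notation we again denote $w^{(N)}$ and $w^{(\infty)}$, with the correct marginal laws, such that $w^{(N)} \to w^{(\infty)}$ almost surely in $\mathcal{S}$. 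We call the law of this coupling $\mathbb{P}$. This gives \eqref{eqCoupling}.

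Next I would extract \eqref{eqRSko}. By the definition of the product topology recalled in the excerpt, $w^{(N)} \to w^{(\infty)}$ in $\mathcal{S}$ means precisely that $M_j^{(N)} \to M_j^{(\infty)}$ and $Y_j^{(N)} \to Y_j^{(\infty)}$ for every fixed $j$. For a fixed $k$, the finite sum $\sum_{j=1}^{(N-1)\wedge k} |M_j^{(N)} - M_j^{(\infty)}|$ is, for $N > k+1$, equal to $\sum_{j=1}^{k} |M_j^{(N)} - M_j^{(\infty)}|$, a finite sum of terms each converging to $0$ almost surely; hence the sum itself converges to $0$ $\mathbb{P}$-a.s., and likewise for the $Y$-sum. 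Almost sure convergence to $0$ implies convergence to $0$ in probability, so for any $\varepsilon, \delta > 0$ there is $\hat N$ such that $\mathbb{P}\bigl(\sum_{j=1}^{(N-1)\wedge k} |M_j^{(N)} - M_j^{(\infty)}| \geq \varepsilon\bigr) < \delta$ for all $N > \hat N$, and similarly for $Y$; taking the larger of the two thresholds gives a single $\hat N$ that works for both estimates.

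The main obstacle, such as it is, is essentially bookkeeping rather than a genuine difficulty: one must be careful that Skorokhod's theorem requires the target space to be separable (Polish suffices), so the one substantive point is verifying that $\mathcal{S}$ with the product topology is Polish — which the excerpt has in fact already noted ("$\mathcal{S}$ is a completely metrizable space"). A second minor point is the truncation $(N-1)\wedge k$: one should note that for all $N$ large enough (namely $N \geq k+1$) this equals $k$, so that the sums are genuinely finite sums of a fixed number of a.s.-convergent terms, and the $\min$ causes no trouble. No estimate here is quantitative in $N$ — the lemma only claims existence of some $\hat N$ — so no rate is needed.
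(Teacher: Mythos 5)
Your proposal is correct and follows essentially the same route as the paper: invoke \eqref{EqConvR1.2} for convergence in law on the Polish space $\mathcal{S}$, apply Skorokhod's representation theorem to upgrade to almost sure convergence, and then read off \eqref{eqRSko} from the fact that a.s. convergence of each coordinate implies a.s. (hence in-probability) convergence of the finite truncated sums. The paper's own proof is just as terse, so your extra bookkeeping about the product metric and the $(N-1)\wedge k$ truncation is a welcome, but not substantively different, elaboration.
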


\subsection{Convergence Results}
Let us rewrite an equivalent, but more handy definition of $\Idtr{\beta}$ and $\udtr{\beta}$:
for a given $k\in \mathbb{N}$ let
\begin{align}
\mathcal{C}_{k}=\{A : A\subset\{1,\cdots, k\} \}
\end{align}
and for any $k\in \mathbb{N}$, $N\in\mathbb{N}\cup\{\infty\}$ and $A\subset\{1,\cdots,k\}$ let $Y_A^{(N)}={\{Y_i^{(N)}\}_{i\in A}\cup\{0,1\}}$, which is well defined also for $A=\emptyset$. Therefore by Theorem \ref{ThmExistence1} we can write
\begin{equation}\label{eqHT:ANK}
	\begin{split}
 &\udtr{\beta}=\max_{A\in \mathcal{C}_{k}}\left[\beta \sum_{i\in A}M_i^{(N)}-E(Y_A)\right],\\
 &\Idtr{\beta}=Y_{A_{\beta,N}^{(k)}}^{(N)},
\end{split}
\end{equation}
for a suitable random set of indexes $A_{\beta,N}^{(k)}$ (which can be empty or not). We have our first convergence result.
\begin{proposition}\label{PropConA} 
Assume that $\hb_N\to \hb$ as $N\to\infty$. Then for any fixed $\delta>0$ and $k\in \mathbb{N}$ there exists $N_k$ such that for any $N>N_k$
\begin{equation}
 \mathbb{P}\left(A_{\hb_N,N}^{(k)}= A_{\hb,\infty}^{(k)}\right)>1-\delta.
\end{equation}
\end{proposition}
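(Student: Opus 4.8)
The plan is to deduce the convergence of the optimal index sets from the almost-sure convergence of the disorder provided by the coupling in Lemma~\ref{RSko}, together with the uniqueness of the maximizer from Theorem~\ref{ThmExistence1}. Fix $k\in\mathbb{N}$. Under the coupling $\mathbb{P}$ we have $M_i^{(N)}\to M_i^{(\infty)}$ and $Y_i^{(N)}\to Y_i^{(\infty)}$ a.s.\ for each $i\le k$; combined with $\hb_N\to\hb$, this forces, for each fixed $A\in\mathcal{C}_k$, the convergence
\begin{equation*}
\hb_N\sum_{i\in A}M_i^{(N)}-E\big(Y_A^{(N)}\big)\;\longrightarrow\;\hb\sum_{i\in A}M_i^{(\infty)}-E\big(Y_A^{(\infty)}\big)\qquad\mathbb{P}\text{-a.s.},
\end{equation*}
because $Y_A^{(N)}\to Y_A^{(\infty)}$ in the Hausdorff metric and $E$ restricted to finite sets of bounded cardinality is continuous along such sequences (the points simply converge; cf.\ the argument in the proof of Theorem~\ref{lemmaEntr1}\eqref{z2}). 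Since $\mathcal{C}_k$ is a \emph{finite} set, all these convergences hold simultaneously a.s.

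Next I would invoke uniqueness. By Theorem~\ref{ThmExistence1}, for the continuum disorder $w^{(\infty)}$ the maximizer of $A\mapsto \hb\sum_{i\in A}M_i^{(\infty)}-E(Y_A^{(\infty)})$ over $\mathcal{C}_k$ is a.s.\ unique; call it $A_{\hb,\infty}^{(k)}$. Uniqueness means there is a strictly positive (random) gap
\begin{equation*}
G:=\Big(\hb\sum_{i\in A_{\hb,\infty}^{(k)}}M_i^{(\infty)}-E(Y_{A_{\hb,\infty}^{(k)}}^{(\infty)})\Big)-\max_{A\neq A_{\hb,\infty}^{(k)}}\Big(\hb\sum_{i\in A}M_i^{(\infty)}-E(Y_A^{(\infty)})\Big)>0\quad\mathbb{P}\text{-a.s.}
\end{equation*}
On the a.s.\ event where the finitely many discrete functionals are within $G/3$ of their continuum counterparts — which holds for all $N$ large enough, $N>N_k(\omega)$ — the discrete maximizer over $\mathcal{C}_k$ must coincide with $A_{\hb,\infty}^{(k)}$, i.e.\ $A_{\hb_N,N}^{(k)}=A_{\hb,\infty}^{(k)}$. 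Thus $\mathbb{P}$-a.s.\ $A_{\hb_N,N}^{(k)}=A_{\hb,\infty}^{(k)}$ for all $N$ large, and in particular $\mathbb{P}(A_{\hb_N,N}^{(k)}=A_{\hb,\infty}^{(k)})\to1$; choosing $N_k$ so that this probability exceeds $1-\delta$ gives the claim. (Alternatively one can avoid the random gap and argue directly: if $A_{\hb_N,N}^{(k)}\neq A_{\hb,\infty}^{(k)}$ along a subsequence with positive probability, pass to a further a.s.-convergent subsequence and contradict uniqueness of the continuum maximizer; the quantitative statement with $N_k$ then follows since the relevant probabilities are monotone-ish and one can use the explicit bounds \eqref{eqRSko}.)

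The main obstacle is making precise that the maps $A\mapsto E(Y_A^{(N)})$ converge to $A\mapsto E(Y_A^{(\infty)})$: although $E$ is only lower semicontinuous in general, here we are evaluating it on finite sets with a \emph{fixed, bounded} number of points whose locations converge, so $E$ is in fact continuous along these particular sequences — one just needs to note that distinct coordinates $Y_i^{(\infty)}$, $i\le k$, are a.s.\ distinct (and distinct from $0,1$), so no two points collapse in the limit and $E(Y_A^{(N)})\to E(Y_A^{(\infty)})$ by continuity of $x\mapsto x^\gamma$. A secondary point requiring a line of care is that uniqueness of the continuum maximizer (Theorem~\ref{ThmExistence1} with $N=k=\infty$, restricted to $\mathcal{C}_k$) indeed yields a genuine strictly positive gap among the finitely many values, which is immediate once uniqueness is granted.
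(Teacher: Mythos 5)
Your proof is correct and takes essentially the same route as the paper's: couple the disorders via Lemma~\ref{RSko}, invoke uniqueness of the continuum maximizer from Theorem~\ref{ThmExistence1}, extract a strictly positive random gap, and conclude that the discrete maximizer must eventually coincide with the continuum one once the finitely many functionals on $\mathcal{C}_k$ are close enough. The paper organizes the argument index-by-index (bounding, for each $r\le k$, the probability of $\{r\notin A_{\hb_N,N}^{(k)},\ r\in A_{\hb,\infty}^{(k)}\}$ and its symmetric counterpart, then summing over $r$), whereas you work directly at the level of whole index sets with a single gap $G$; this is a minor reorganization of the same idea and is, if anything, a bit more streamlined. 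One thing you do that the paper's write-up glosses over: in the paper's displays \eqref{spe1}, \eqref{spe2} the entropy appears on both sides simply as $E(Y_A)$, with no account taken of the difference $|E(Y_A^{(N)})-E(Y_A^{(\infty)})|$, even though the discrete functional uses $Y_A^{(N)}$ and the continuum one uses $Y_A^{(\infty)}$. You correctly observe that this difference also tends to zero under the coupling — because the $Y_i^{(N)}$ converge to a.s.\ distinct points $Y_i^{(\infty)}\in(0,1)$ and $x\mapsto x^\gamma$ is continuous — so the argument closes; this is a genuine small gap in the paper's proof that your version fills in.
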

\begin{proof}
To prove the claim by using the sub-additivity of the probability, it is enough to prove that for any $r\in\{1,\cdots,k\}$ 
\begin{align}
 &\mathbb{P}\left(r\not\in A_{\hb_N,N}^{(k)}, r\in A_{\hb,\infty}^{(k)}\right)\to 0,\quad N\to\infty,\\
 &\mathbb{P}\left(r\in A_{\hb_N,N}^{(k)}, r\not\in A_{\hb,\infty}^{(k)}\right)\to 0,\quad N\to\infty.
\end{align}

We detail the first one, the second one follows in an analogous way. On the event $\{r\not\in A_{\hb_N,N}^{(k)}, r\in A_{\hb,\infty}^{(k)}\}$ we consider 
\begin{equation}\label{spe1.1}
 \hat{u}_{(r)}:=\max_{A\in\mathcal{C}_k, r\not\in A}\left[\hb\sum_{i\in A} M_i^{(\infty)}-E(Y_A)\right]<\uctr{\hb}                                                                                 
\end{equation}
because $r\in A_{\hb,\infty}^{(k)}$ and  the set that achieves the maximum is unique.
Then
\begin{multline}\label{spe1}
 \max_{A\in\mathcal{C}_k, r\not\in A}\left[\hb_N\sum_{i\in A} M_i^{(N)}-E(Y_A)\right]\\
  \leq
 \max_{A\in\mathcal{C}_k, r\not\in A}\left[\hb\sum_{i\in A} M_i^{(\infty)}-E(Y_A)\right]+
 |\hb_N-\hb| \sum_{i=1}^k M_i^{(N)}+\hb \sum_{i=1}^k |M_i^{(N)}-M_i^{(\infty)}|\\
 =\hat{u}_{(r)}+|\hb_N-\hb| \sum_{i=1}^k M_i^{(N)}+\hb \sum_{i=1}^k |M_i^{(N)}-M_i^{(\infty)}|
\end{multline}
and in the same way, always on the event $\{r\not\in A_{\hb_N,N}^{(k)}, r\in A_{\hb,\infty}^{(k)}\}$,
\begin{equation}\label{spe2}
\max_{A\in\mathcal{C}_k, r\in A}\left[\hb_N\sum_{i\in A} M_i^{(N)}-E(Y_A)\right]\geq \uctr{\hb}-|\hb_N-\hb| \sum_{i=1}^k M_i^{(N)}-\hb \sum_{i=1}^k |M_i^{(N)}-M_i^{(\infty)}|.
\end{equation}
Therefore by using the assumption that $r\not\in A_{\hb_N,N}^{(k)}$ we have that the l.h.s. of (\ref{spe1}) is larger than the l.h.s. of (\ref{spe2}). Together with \eqref{spe1.1} we obtain $0<\uctr{\hb}-\hat{u}_{(r)}\leq 2\hb \sum_{i=1}^k |M_i^{(N)}-M_i^{(\infty)}|+2|\hb_N-\hb| \sum_{i=1}^k M_i^{(N)}$, and a simple inclusion of events gives
\begin{equation}
 \mathbb{P}\left(r\not\in A_{\hb_N,N}^{(k)}, r\in A_{\hb,\infty}^{(k)}\right)\leq \mathbb{P}\left(0<\uctr{\hb}-\hat{u}_{(r)} \leq 2\hb \sum_{i=1}^k |M_i^{(N)}-M_i^{(\infty)}|+2|\hb_N-\hb| \sum_{i=1}^k M_i^{(N)}\right).
\end{equation}
The proof follows by observing that the r.h.s. converges to $0$ as $N\to\infty$ by Lemma \ref{RSko}.
\end{proof}

The following proposition contains the convergence results for the truncated quantities $\Idtr{\hb_N}$ and $\udtr{\hb_N}$, cf. \eqref{eqVarEE} and \eqref{eqVarEExyz} respectively.

\smallskip

We introduce the maximum of $\Udtr{\beta}$, cf. \eqref{eqVarEExyzbix}, outside a neighborhood of radius $\delta$ of $\Idtr{\beta}$
\begin{definition}\label{defudtrdek}
 For any $\delta>0$, ${\beta}\in (0,\infty)$ we define
  \begin{equation}
   \udtr{\beta}(\delta)=\max_{I\in \X: d_H(I,\Idtr{\beta})\geq \delta} \Udtr{\beta}(I),
  \end{equation}
  where $\Udtr{\beta}$ is defined in \eqref{eqVarEExyzbix}.
\end{definition}
\begin{proposition}\label{PropConU}Assume that $\hb_N\to \hb$ as $N\to\infty$. The following hold
 \begin{enumerate} 
   \item \label{con5} For every fixed $\delta>0$, ${\beta}\in (0,\infty)$ 
$\mathbb{P}\left(\liminf\limits_{k\to\infty} (\uctr{\beta}-\uctr{\beta}(\delta))>0\right)=1$.
  \item \label{con4} For any $\varepsilon, \delta>0$ and for any fixed $k$ there exists $N_k$ such that 
   $\mathbb{P}\left(|\udtr{\hb_N}-\uctr{\hb}|<\varepsilon\right)>1-\delta,$
  for any $N>N_k$.
  \item \label{con1} For any $\varepsilon, \delta>0$ and for any fixed $k$ there exists $N_k$ such that 
   $\mathbb{P}\left(d_H(\Idtr{\hb_N},\Ictr{\hb})<\varepsilon\right)>1-\delta,$
  for any $N>N_k$.
    \item \label{con6} For any $\varepsilon, \delta>0$, there exist $\eta, K>0$ and $(N_k)_{k>K}$, such that 
$\mathbb{P}\left(\udtr{\hb_N}(\varepsilon)<\udtr{\hb_N}-\eta \right)>1-\delta,$
  for any $k>K$, and $N>N_k$.
 \end{enumerate}
\end{proposition}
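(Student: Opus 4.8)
The plan is to prove the four statements essentially in the order listed, since each later one leans on the earlier ones together with the convergence package from Lemma \ref{RSko} and Proposition \ref{PropConA}. For \eqref{con5}, the key point is that $\Ictr{\beta}$ is the \emph{unique} maximizer of $\Uctr{\beta}$ (Theorem \ref{ThmExistence1}) and, for each fixed $k$, by \eqref{eqUni1infty}–\eqref{eqHT:ANK} the competitors are parametrized by the finite set $\mathcal{C}_k$, so $\Uctr{\beta}(\cdot)$ restricted to sets of the form $Y_A^{(\infty)}$ takes only finitely many values; hence the gap $\Uctr{\beta}-\uctr{\beta}(\delta)$ between the best and second-best configuration is a.s. strictly positive for every $k$. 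Monotonicity in $k$ of $\uctr{\beta}$ (more truncation levels can only increase the max) plus the uniform convergence $\sCtr\to\sC$ from \eqref{eql11} shows the $\liminf_{k\to\infty}$ is in fact the limit and is strictly positive a.s. — I would spell out that if the limiting gap were $0$ there would be two distinct maximizers of $\Uc{\beta}$, contradicting uniqueness.

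For \eqref{con4}, I would write $\udtr{\hb_N}-\uctr{\hb}$ as a difference of two maxima over the same finite index family $\mathcal{C}_k$ and bound it, exactly as in the proof of Proposition \ref{PropConA}, by
$|\hb_N-\hb|\sum_{i=1}^k M_i^{(N)}+\hb\sum_{i=1}^k|M_i^{(N)}-M_i^{(\infty)}|+(\text{a term controlling } E(Y_A^{(N)}) \text{ vs } E(Y_A^{(\infty)}))$;
the entropy term is handled because $A$ ranges over a fixed finite family and the points $Y_i^{(N)}\to Y_i^{(\infty)}$, so $E(Y_A^{(N)})\to E(Y_A^{(\infty)})$ for each of the finitely many $A$. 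All three pieces go to $0$ in probability by Lemma \ref{RSko} and $\hb_N\to\hb$, giving \eqref{con4}. Statement \eqref{con1} then follows by combining Proposition \ref{PropConA} — which says $A_{\hb_N,N}^{(k)}=A_{\hb,\infty}^{(k)}$ with high probability — with the fact that on that event $\Idtr{\hb_N}=Y_{A}^{(N)}$ and $\Ictr{\hb}=Y_{A}^{(\infty)}$ for the \emph{same} $A$, whence $d_H(\Idtr{\hb_N},\Ictr{\hb})\le\sum_{i=1}^k|Y_i^{(N)}-Y_i^{(\infty)}|$, again small with high probability by Lemma \ref{RSko}.

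For \eqref{con6} — the statement I expect to be the main obstacle — the issue is that $\udtr{\hb_N}(\varepsilon)$ is a maximum over \emph{all} $I\in\X$ at Hausdorff distance $\ge\varepsilon$ from $\Idtr{\hb_N}$, not just over the finite family $\mathcal{C}_k$, so the uniform estimates above do not directly apply and one cannot simply invoke uniqueness of the discrete maximizer (whose competitors are all inside $Y^{(N,k)}$). The strategy is to transfer the \emph{continuum} gap from \eqref{con5} to the discrete model: fix $k>K$ so that by \eqref{con5} (and a choice of $K$) the continuum gap $\uctr{\hb}-\uctr{\hb}(\varepsilon/2)\ge 2\eta$ with probability $>1-\delta/2$; then on the high-probability event where additionally $d_H(\Idtr{\hb_N},\Ictr{\hb})<\varepsilon/2$ (from \eqref{con1}), $|\udtr{\hb_N}-\uctr{\hb}|<\eta/2$ (from \eqref{con4}), and the sup-distance estimate $\sup_I|\sDtr(I)-\sCtr(I)|<\eta/(4\hb)$ (from \eqref{eqRSko}, since $\Udtr{\beta}(I)-\Uctr{\beta}(I)=\beta(\sDtr(I)-\sCtr(I))+(\hb_N-\hb)\sDtr(I)$ and the entropy cancels), any $I$ with $d_H(I,\Idtr{\hb_N})\ge\varepsilon$ satisfies $d_H(I,\Ictr{\hb})\ge\varepsilon/2$, so $\Udtr{\hb_N}(I)\le\Uctr{\hb}(I)+\eta/4\le\uctr{\hb}(\varepsilon/2)+\eta/4\le\uctr{\hb}-2\eta+\eta/4$, while $\udtr{\hb_N}\ge\uctr{\hb}-\eta/2$; choosing $\eta$ small enough (e.g. replacing $\eta$ by a fixed fraction of itself) makes the former strictly smaller than $\udtr{\hb_N}-\eta$. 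Taking a union bound over the finitely many bad events of total probability $<\delta$ yields the claim with $(N_k)_{k>K}$ the maximum of the thresholds produced by \eqref{con1}, \eqref{con4} and \eqref{eqRSko}. The delicate bookkeeping is matching up the $\varepsilon$'s and $\eta$'s across the four estimates so that the final strict inequality survives; everything else is routine.
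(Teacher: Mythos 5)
The proofs you give for \eqref{con4} and \eqref{con1} are correct and essentially match the paper's (in fact for \eqref{con4} you are more careful than the paper in noting that the entropy comparison $E(Y_A^{(N)})$ versus $E(Y_A^{(\infty)})$ must also be controlled; it is, since $A$ ranges over the fixed finite family $\mathcal{C}_k$ and the points converge). For \eqref{con5}, the claim that the gap is positive for each fixed $k$ ``because the competitors are parametrized by $\mathcal{C}_k$'' is not quite right: $\uctr{\beta}(\delta)$ is a maximum over \emph{all} $I\in\X$ at Hausdorff distance $\ge\delta$ from $\Ictr{\beta}$, not only over $\{Y_A^{(\infty)}: A\in\mathcal{C}_k\}$, so the competitor set is infinite. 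Positivity for fixed $k$ does hold, but via upper semi-continuity of $\Uctr{\beta}$ on the compact constraint set plus uniqueness of the maximizer; and passing to $\liminf_{k\to\infty}$ requires care because the constraint set itself moves with $k$ (it is centered at $\Ictr{\beta}$). The paper handles this via a contradiction argument using compactness of $\X$, showing that any sequence $I_{k_j}$ witnessing a vanishing gap converges to $\Ic{\beta}$, as does $\Ictr{\beta}$, contradicting the distance constraint. Your sketch should be expanded to that.

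The genuine gap is in \eqref{con6}. Your key estimate $\sup_{I\in\X}|\sDtr(I)-\sCtr(I)|<\eta/(4\hb)$ is false. The functions $\sDtr$ and $\sCtr$ read off the disorder at different positions: $\sDtr(I)=\sum_{i\le k}M_i^{(N)}\ind(Y_i^{(N)}\in I)$ versus $\sCtr(I)=\sum_{i\le k}M_i^{(\infty)}\ind(Y_i^{(\infty)}\in I)$. Taking, say, $I=\{0,Y_1^{(N)},1\}$ gives $\sDtr(I)\ge M_1^{(N)}$ while $\sCtr(I)=0$ a.s., so the sup-norm difference is bounded below by $M_1^{(N)}$, which is $O(1)$, no matter how close $Y_i^{(N)}$ is to $Y_i^{(\infty)}$. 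Consequently the inequality $\Udtr{\hb_N}(I)\le\Uctr{\hb}(I)+\eta/4$ fails for such $I$, and the chain of inequalities collapses. The paper circumvents this by exploiting that the set $I$ achieving $\udtr{\hb_N}(\varepsilon)$ is (on the good event) a subset of $Y^{(N,k)}$, and then performs an \emph{index substitution}: if $I=\{0,Y_{i_1}^{(N)},\dots,Y_{i_\ell}^{(N)},1\}$, one defines $I'=\{0,Y_{i_1}^{(\infty)},\dots,Y_{i_\ell}^{(\infty)},1\}$. Now the same indices appear on both sides, so $\Udtr{\hb_N}(I)$ and $\Uctr{\hb}(I')$ differ only by the $|M_i^{(N)}-M_i^{(\infty)}|$, $|\hb_N-\hb|$ and entropy-continuity terms, all of which Lemma \ref{RSko} controls; moreover $d_H(I,I')<\varepsilon/2$ on the good event, so $d_H(I',\Ictr{\hb})\ge\varepsilon/4$ and $\Uctr{\hb}(I')\le\uctr{\hb}(\varepsilon/4)$. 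The remaining bookkeeping with \eqref{con5} and \eqref{con4} is then as you describe, but the index substitution is the missing ingredient, not a routine matching of $\varepsilon$'s.
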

\begin{proof}
  We follow \cite[Part (3,4) of Proof of Lemma 4.1]{AL11}.
By contradiction if there exists $\delta>0$ such that $\liminf_{k\to\infty} (\uctr{\beta}-\uctr{\beta}(\delta))=0$, then we may find a sequence $I_{k_j}$ such that $\limsup_{j\to\infty} U_{\beta,\infty}^{(k_j)}(I_{k_j})\geq \liminf_{j\to\infty} U_{\beta, \infty}^{ (k_j)}(\hat{I}_{\beta,\infty}^{(k_j)})$ and $d_H(\hat{I}_{\beta, \infty}^{k_j},I_{k_j})>\delta$.
By compactness of the space $\X$ we can suppose that there exists $I_0\in \X$ such that $\lim_{j\to\infty}I_{k_j}= I_0$, therefore by using the u.s.c. property of $\Udtr{\beta}$, cf. Section \ref{sec:EEdef1}, that for any fixed $k\in \mathbb{N}$, $\Uc{\beta}(I)\geq \Uctr{\beta}(I)$ and $\Uctr{\beta}(I)\uparrow \Uc{\beta}(I)$ as $k\uparrow\infty$, we get
\begin{multline}
\Uc{\beta}(I_0)\geq \limsup_{j\to \infty} \Uc{\beta}(I_{k_j})\geq \limsup_{j\to\infty} U_{\beta,\infty}^{(k_j)}(I_{k_j}) \geq \\ \geq \liminf_{j\to\infty} U_{\beta, \infty}^{ (k_j)}(\hat{I}_{\beta,\infty}^{(k_j)}) 
\geq  \liminf_{j\to\infty} U_{\beta, \infty}^{ (k_j)}(\Ic{\beta})
=\Uc{\beta}(\Ic{\beta})=\uc{\beta},
\end{multline}
namely, $\Uc{\beta}(I_0)=\uc{\beta}$. The uniqueness of the maximizer, cf. Theorem \ref{ThmExistence1}, implies $I_0=\Ic{\beta}$.
Thus if we show that $\lim_{k\to\infty}\Ictr{\beta}=\Ic{\beta}$, then we obtain the desired contradiction, because the two sequences $(I_{k_j})_j$ and $(\hat{I}_{\beta,\infty}^{(k_j)})_j$ are at distance at least $\delta$ therefore they cannot converge to the same limit. By compactness of $\X$ we can assume that $\Ictr{\beta}$ converges to $I_1$.
Therefore, again by u.s.c. of $\Uc{\beta}$, we get
\begin{equation}
\label{eqCo1}
 \Uc{\beta}(I_1)\geq \limsup_{k\to \infty} \Uc{\beta}(\Ictr{\beta})\geq  
 \limsup_{k\to \infty} \Uctr{\beta}(\Ictr{\beta})
\geq \limsup_{k\to \infty} \Uctr{\beta}(\Ic{\beta})=\Uc{\beta}(\Ic{\beta}).
\end{equation}
The uniqueness of the maximizer forces $\Ic{\beta}=I_1$ and this concludes the proof.$\hfill\Box$

\smallskip

To prove Part (\ref{con4}) we observe that 
\begin{align}
 &\uctr{\hb}=\max_{A\in\mathcal{C}_k}\left[\hb \sum_{i\in A}M^{(\infty)}_i-E(Y_A)\right]\leq \udtr{\hb_N}+|\hb_N-\hb|\sum_{i=1}^k M_i+\hb_N\sum_{i=1}^k|M_i^{(\infty)}-M_i^{(N)}|,\\
 &\udtr{\hb_N}=\max_{A\in\mathcal{C}_k}\left[\hb_N \sum_{i\in A}M_i^{(N)}-E(Y_A)\right]\leq \uctr{\hb}+|\hb_N-\hb|\sum_{i=1}^k M_i^{(N)}+\hb\sum_{i=1}^k|M_i^{(\infty)}-M_i^{(N)}|.
\end{align}
and the proof follows by Lemma \ref{RSko} and the assumption on $\hb_N$.$\hfill\Box$

\smallskip

To prove Part (\ref{con1}) we observe that by Lemma \ref{RSko} for any fixed $\varepsilon, \delta>0$ and $k\in\mathbb{N}$, there exists $N_k$ such that, for all $N>N_k$, 
$\mathbb{P}\left(d(Y_i^{(k)},Y_i^{(\infty)})<\varepsilon,\, \text{for any }\, i=1,\cdots, k\right)>1-\delta/2 .$ By Proposition \ref{PropConA} we can furthermore suppose that for any $N>N_k$, $\mathbb{P}\left(A_{\hb_N,N}^{(k)}=A_{\hb,\infty}^{(k)}\right)>1-\delta/2$, cf. \eqref{eqHT:ANK}. The intersection of such events gives the result.
  $\hfill\Box$
  
  \smallskip
  
 To prove Part \eqref{con6} 
 we prove first an intermediate result: for any given $\delta, \varepsilon,\eta >0$, and $k\in\mathbb{N}$ there exists $N_k$ such that 
 \begin{equation}\label{eq9.infui}
\udtr{\hb_N}(\varepsilon)<\uctr{\hb}(\varepsilon/4)+\eta/4,
\end{equation} 
with probability larger than $1-\delta/2$, for all $N>N_k$.

For this purpose, by 
Part (\ref{con1}), for any $k>0$ there exists $N_k>0$ such that for all $N>N_k$, $d_H(\Idtr{\hb_N},\Ictr{\hb})<\frac{\varepsilon}{4}$ with probability larger than $1-\delta/4$. 
Let $I$ be a set achieving $\udtr{\hb_N}(\varepsilon)$, so that by definition $d_H(I,\Idtr{\hb_N})\geq\varepsilon$. 
It is not difficult to see that $I\subset Y^{(N,k)}$ (points outside $Y^{(N,k)}$ does not contribute to the Energy, but increase the entropy). 
We claim that for any $\eta>0$ there exists $I'\subset Y^{(\infty,k)}\in \Xden$ such that $d_H(I',I)<{\varepsilon}/2$ and $\Udtr{\hb_N}(I)\leq \Uctr{\hb}(I')+\eta/4$ 
with probability larger than $1-\delta/4$ . 
 This relation implies that $\udtr{\hb_N}(\varepsilon)\leq \uctr{\hb}(\varepsilon/4)+\eta/4$, because $d_H(I',\Ictr{\hb})>\varepsilon/4$ and \eqref{eq9.infui} follows.
 The existence of $I'$ is explicit: we observe that $I=\{0,Y_{i_1}^{(N)},\cdots ,Y_{i_{\ell}}^{(N)},1\}$, for a suitable choice of indexes $\{i_1,\cdots,i_{\ell}\}\subset\{1,\cdots,k\}$.  By using Lemma \ref{RSko} it is not difficult to show that we can choose $I'=\{0,Y_{i_1}^{(\infty)},\cdots ,Y_{i_{\ell}}^{(\infty)},1\}$, possibly by enlarging $N$. 
 
 The proof of \eqref{con6} follows by observing that by 
 Part (\ref{con5}), there exist $\eta>0$ and $K>0$ such that $\uctr{\hb}(\varepsilon/4)\leq \uctr{\hb}-\eta$ with probability larger than $1-\delta/4$, for any $k>K$. This provides an upper bound for (\ref{eq9.infui}) and Part (\ref{con4}) allows to complete the proof.
\end{proof}

Let us stress that for any fixed $N\in \mathbb{N}$ we have that $\Idtr{\hb_N}\equiv\Id{\hb_N}$ as $k>N$. In the following Proposition we show that this convergence holds uniformly on $N$.

\begin{proposition}\label{PropConI}The following holds
 \begin{enumerate}
 \item \label{Tcnlemma3} For any $N,k\in \mathbb{N}\cup\{\infty\}$ we define
 \begin{equation}\label{eqRho1}
 \Rnk:=\displaystyle\sup_{I\in \X} \left|\sD(I)-\sDtr(I)\right|=\sum_{i>k} M_i^{(N)}.
 \end{equation}
Then for any $\varepsilon,\delta>0$ there exists $K>0$ such that
$\mathbb{P}(\Rnk>\varepsilon)<\delta$ for all $k>K$, uniformly on $N\in\mathbb{N}$.
\item \label{con5.1}$\mathbb{P}\left(\displaystyle\lim_{k\to\infty}\Ictr{\hb}=\Ic{\hb}\right)=1.$
 \item \label{con2} For any $\varepsilon, \delta>0$ there exists $K>0$ such that 
$\mathbb{P}\left(d_H(\Idtr{\hb_N},\Id{\hb_N})<\varepsilon\right)>1-\delta$ for all $k>K$, uniformly on $N$.
 \end{enumerate}
\end{proposition}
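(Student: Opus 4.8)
The plan is to prove the three parts in order, since each feeds into the next. For Part \eqref{Tcnlemma3}, the identity $\Rnk = \sum_{i>k} M_i^{(N)}$ is immediate from the definitions \eqref{eqResc} and \eqref{eqEntrxyz} together with the fact that the positions $Y_i^{(N)}$ are distinct, so the indicator events $\{Y_i^{(N)}\in I\}$ involve disjoint contributions. The real content is the uniform-in-$N$ tail bound. First I would handle the continuum case: by Remark \ref{rem:aboutMi}, $\sum_i M_i^{(\infty)}<\infty$ a.s., so $\sum_{i>k}M_i^{(\infty)}\to 0$ a.s., giving the bound for $N=\infty$. For finite $N$, the idea is to compare $\sum_{i>k}M_i^{(N)}$ with its continuum counterpart. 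Because $(M_i^{(N)})_{i\le N-1}$ are the ordered normalized statistics of the disorder, one can use a direct probabilistic estimate: for instance, $\mathbb{E}[\sum_{k<i\le N-1} M_i^{(N)}] = b_N^{-1}\,\mathbb{E}[\sum_{k<i\le N-1}\tilde M_i^{(N)}]$, and the expected sum of all but the top $k$ order statistics of $N$ i.i.d.\ regularly-varying-($\alpha$) variables is, after normalization by $b_N\sim N^{1/\alpha}\ell_0(N)$, bounded by a constant times $k^{1-1/\alpha}$ uniformly in $N$ (this is a standard consequence of $\alpha<1$; one can also invoke the convergence $\sum_{i>k}M_i^{(N)}\tolaw\sum_{i>k}M_i^{(\infty)}$ via \eqref{EqConvR1} together with a uniform-integrability / truncation argument). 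Then Markov's inequality gives $\mathbb{P}(\Rnk>\varepsilon)\le C k^{1-1/\alpha}/\varepsilon$, which is $<\delta$ for $k$ large, uniformly in $N$. The main obstacle is this uniformity: one must be careful that the slowly varying function $\ell_0$ does not spoil the bound, which is why it is cleanest to phrase the estimate through the almost-sure convergence of the coupled disorder of Lemma \ref{RSko} combined with the explicit a.s.\ asymptotics $M_i^{(\infty)}\sim i^{-1/\alpha}$.

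For Part \eqref{con5.1}, this is exactly the statement $\lim_{k\to\infty}\Ictr{\hb}=\Ic{\hb}$ a.s., which was already established inside the proof of Part \eqref{con5} of Proposition \ref{PropConU}: there, by compactness of $(\X,d_H)$ one extracts a limit $I_1$ of $\Ictr{\hb}$ along a subsequence, and the uniform convergence $\Uctr{\hb}\uparrow\Uc{\hb}$ (from \eqref{eql11}) together with upper semi-continuity of $\Uc{\hb}$ and the uniqueness of its maximizer (Theorem \ref{ThmExistence1}) forces $I_1=\Ic{\hb}$; since every subsequential limit equals $\Ic{\hb}$, the whole sequence converges. So here I would simply record that this is what was shown and refer back.

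For Part \eqref{con2}, the strategy is to bound $|\Udtr{\hb_N}(I)-\Ud{\hb_N}(I)|\le \hb_N\,\Rnk$ uniformly in $I\in\X$, using Part \eqref{Tcnlemma3} and the fact that $(\hb_N)$ is bounded (it converges). Then a standard stability argument for argmax of an u.s.c.\ function on a compact space applies: if $\Rnk$ is small, then $\Id{\hb_N}$ is nearly optimal for $\Udtr{\hb_N}$, so it must lie close to the unique maximizer $\Idtr{\hb_N}$ — provided there is a quantitative gap between the value at the maximizer and the value away from it. That gap is supplied by Part \eqref{con6} of Proposition \ref{PropConU} (the bound $\udtr{\hb_N}(\varepsilon)<\udtr{\hb_N}-\eta$): on the event where simultaneously $\hb_N\Rnk<\eta/4$ (possible for $k$ large uniformly in $N$ by Part \eqref{Tcnlemma3}) and the gap estimate of Part \eqref{con6} holds, any set at Hausdorff distance $\ge\varepsilon$ from $\Idtr{\hb_N}$ has $\Udtr{\hb_N}$-value below $\udtr{\hb_N}-\eta$, hence $\Ud{\hb_N}$-value below $\udtr{\hb_N}-\eta/2<\Ud{\hb_N}(\Idtr{\hb_N})$, so it cannot be the $\Ud{\hb_N}$-maximizer; therefore $\Id{\hb_N}$ lies in the $\varepsilon$-neighborhood of $\Idtr{\hb_N}$. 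Intersecting the two high-probability events (each of probability $>1-\delta/2$) finishes the proof. The delicate point throughout is keeping all the $N_k$ and $K$ chosen so that the final probability bound is genuinely uniform in $N$, which is exactly what Parts \eqref{Tcnlemma3} and \eqref{con6} were arranged to deliver.
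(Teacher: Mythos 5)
Parts \eqref{con5.1} and \eqref{con2} of your plan are correct and match the paper. For \eqref{con5.1} both you and the paper simply record that the claim was already established in the proof of Proposition~\ref{PropConU}\eqref{con5}. For \eqref{con2} your one-step argument --- bound $|\Udtr{\hb_N}(I)-\Ud{\hb_N}(I)|\le\hb_N\Rnk$ uniformly in $I$, then combine with the gap estimate of Proposition~\ref{PropConU}\eqref{con6} to conclude that no set at Hausdorff distance $\ge\varepsilon$ from $\Idtr{\hb_N}$ can be the $\Ud{\hb_N}$-maximizer --- is in fact a shade cleaner than the paper's, which passes through an intermediate $\ell$-truncation and a triangle inequality. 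The only thing you omit is the degenerate regime $N\le N_k$, outside the range of \eqref{con6}; the paper disposes of it by noting that once $k>N$ the truncation is vacuous, $\Idtr{\hb_N}=\Id{\hb_N}$ identically, and $d_H=0$.

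The genuine gap is in Part~\eqref{Tcnlemma3}. You offer three routes: (i) a direct bound $\mathbb{E}[\sum_{i>k}M_i^{(N)}]\lesssim k^{1-1/\alpha}$ uniformly in $N$; (ii) convergence in law of the tail sums $\sum_{i>k}M_i^{(N)}\tolaw\sum_{i>k}M_i^{(\infty)}$ plus uniform integrability; (iii) the a.s.\ coupling of Lemma~\ref{RSko} plus $M_i^{(\infty)}\sim i^{-1/\alpha}$. None of these is secure as stated. For (i), with $\alpha\in(0,1)$ the order statistic $M_r^{(N)}$ has infinite mean for $r\le 1/\alpha$, and even for larger $r$ a bound uniform in $N$ for a general slowly varying $L_0$ is exactly the non-trivial point you would need to justify. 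For (ii), the paper never proves convergence of the tail sums --- only the finite-dimensional convergence \eqref{EqConvR1} --- and such a statement is essentially what Part~\eqref{Tcnlemma3} establishes, so invoking it is circular. For (iii), the Skorokhod coupling controls each fixed coordinate $M_i^{(N)}\to M_i^{(\infty)}$ but says nothing uniform-in-$N$ about the tail $\sum_{i>k}M_i^{(N)}$. The paper's actual device, which your ``truncation argument'' hedge gestures at but does not supply, is the good event $\mathcal{B}_k^{(N)}$ of \cite{HM07}, on which the ordered statistics are sandwiched between deterministic quantiles; conditioning on $\mathcal{B}_k^{(N)}$ and applying \cite[Lemma~3.8]{HM07} yields $\mathbb{E}[M_r^{(N)};\mathcal{B}_k^{(N)}]\le c_0 r^{-1/\alpha+\delta}+c_1 b_N^{-1}\ind_{\{r>c_2 N\}}$, whence the sum over $r>k$ is $O(k^{1-1/\alpha+\delta})+O(Nb_N^{-1})$ uniformly in $N>k$, and Markov's inequality on the good event finishes. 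Without that good-event conditioning, Part~\eqref{Tcnlemma3} is not proved.
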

\begin{proof}
Part (\ref{Tcnlemma3}) is similar to \cite[Proposition 3.3]{HM07} and actually simpler. Here we give a short sketch of the proof. We note that if $k\geq N$, then $\Rnk \equiv 0$, therefore we can suppose $k<N$. For such $k$ we consider the "good event", like in \cite[(3.8)]{HM07}
\begin{equation}
\mathcal{B}_k^{(N)}=\left\{F^{-1}\left(1-\frac{2r}{N}\right)\leq \tilde{M}_r^{(N)}\leq F^{-1}\left(1-\frac{1}{N}\right), \, \text{ for all $k\leq r\leq N-1$} \right\}.
\end{equation}
Then, cf. \cite[Lemma 3.4]{HM07} $\mathbb{P}\left(\mathcal{B}_k^{(N)}\right)\to 1$ as $k\to\infty$, uniformly on $N$. By partitioning with respect to the "good event" and then by using Markov's inequality, we get that for any $\varepsilon>0$
\begin{equation}\label{eq:lemma3.3remake}
\mathbb{P}\left(\Rnk >\varepsilon\right)\, \leq \, \mathbb{P}\left(\mathcal{B}_k^{(N)}\, \text{fails}\right)+ \varepsilon^{-1}\, \sum_{r=k}^{N-1}\mathbb{E}\left[M_r^{(N)}; \mathcal{B}_k^{(N)}\right].
\end{equation}
To conclude the proof it is enough to show that $\sum_{r=k}^{N-1}\mathbb{E}\left[M_r^{(N)}; \mathcal{B}_k^{(N)}\right]$ 
converges to $0$ as $k\to\infty$, uniformly on $N>k$.
An upper bound for $\mathbb{E}\left[M_r^{(N)}; \mathcal{B}_k^{(N)}\right]$ is provided by \cite[Lemma 3.8]{HM07}: for any $\delta>0$ there exist $c_0,c_1$ and $c_2>0$ such that for any $2(1+1/\alpha)<k<r<N$
$$
\mathbb{E}\left[M_r^{(N)}; \mathcal{B}_k^{(N)}\right]\, \leq\, c_0 r^{-\frac{1}{\alpha}+\delta}+c_1 b_N^{-1}\ind_{\{r>c_2n\}}.
$$
This allows to conclude that there exist $c_0',c_1'>0$ such that
$$
\sum_{r=k}^{N-1}\mathbb{E}\left[M_r^{(N)}; \mathcal{B}_k^{(N)}\right]\leq c_0' k^{-\frac{1}{\alpha}+1+\delta}+ c_1' Nb_N^{-1}.
$$
Since $\alpha\in (0,1)$ and $ Nb_N^{-1}\to 0$ as $N\to\infty$, cf. \eqref{eq:b_Nbev}, we conclude that the r.h.s. converges to $0$ as $k\to\infty$, uniformly on $N>k$.
$\hfill\Box$

\smallskip

Part (\ref{con5.1}) has been already proven in the proof of Part (\ref{con5}) of Proposition \ref{PropConU}.  $\hfill\Box$

\smallskip


 
 Part \eqref{con2} is a consequence of \eqref{con6}.
Let us fix $k$ such that \eqref{con6} holds for any $N>N_k$ and that $\mathbb{P}(\hb_N\Rnk<\eta/4)>1-\delta$ uniformly on $N$, cf. \eqref{eqRho1}. In such case we claim that, for any $\ell>k$ and $N>N_k$
 \begin{equation}\label{prof:ciccio3}
 d_H(\Idtr{\hb_N},\hat{I}_{{\hb_N}, N}^{(\ell)})<\varepsilon,
 \end{equation}
 with probability larger than $1-2\delta$.
Otherwise if $d_H(\Idtr{\hb_N},\hat{I}_{{\hb_N}, N}^{(\ell)})\geq \varepsilon$ for some $\ell>k$, then it holds that 
\begin{equation}\label{prof:cicio2}
\hat{u}_{\hb_N,N}^{(\ell)}\,\leq\, U_{\hb_N,N}^{(k)}(\hat{I}_{{\hb_N}, N}^{(\ell)})+\hb_N\Rnk\,\leq\, \udtr{\hb_N}(\varepsilon)+\eta/4.
\end{equation}
Relation \eqref{con6} provides an upper bound for the r.h.s. of \eqref{prof:cicio2}, giving $\hat{u}_{\hb_N,N}^{(\ell)}\leq \udtr{\hb_N}-\eta/4$ and this is a contradiction because $\ell\mapsto \hat{u}_{\hb_N,N}^{(\ell)}$ is non-decreasing and thus $\hat{u}_{\hb_N,N}^{(\ell)}\geq \udtr{\hb_N}$. By using \eqref{prof:ciccio3} together with the triangle inequality we conclude that 
for any $\ell>k$ and $N>N_k$
 \begin{equation}\label{prof:ciccio4}
 d_H(\Id{\hb_N},\hat{I}_{{\hb_N}, N}^{(\ell)})<2\varepsilon,
 \end{equation}
 with probability larger than $1-4\delta$. To conclude we have to consider the case in which $N\,\leq\,N_k$. For any such $N$, $\Idtr{\hb_N}$ converges to $\Id{\hb_N}$ as $k\to\infty$. To be more precise, whenever $k>N$ we have that $\Idtr{\hb}=\Id{\hb}$. This concludes the proof. 	
%
 \end{proof}

\subsection{Proof of theorem \ref{TheoremMain1}}
 The proof is a consequence of \cite[Theorem 3.2]{B99}, which can be written as follows
 \begin{theorem}
  Let us suppose that the r.v's $X_N^{(k)}, X^{(k)}, X_N, X$  take values in a separable metric space $(\mathcal{S}, d_{\mathcal{S}})$ and $X_N^{(k)}, X_N$ are defined on the same probability space. Then if the following diagram holds
  \begin{displaymath}
    \xymatrix{ X_N^{(k)} \ar[d]^{k\to\infty}_{\textrm{in probability, uniformly in } N} \ar[r]_{N\to\infty}^{(\mathrm{d})} & X^{(k)} \ar[d]_{(\mathrm{d})}^{k\to\infty} \\
               X_N  & X  }
\end{displaymath}
then $X_N{\tolaw}X$.
The expression \emph{in probability, uniformly in $N$} means 
\begin{equation}
 \lim_{k\to\infty}\limsup_{N\to\infty}\mathbb{P}\left(d_{\mathcal{S}}(X_N^{(k)}, X_N)\geq \varepsilon \right)=0,
\end{equation}
for any fixed $\varepsilon>0$.
 \end{theorem}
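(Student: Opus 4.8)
The plan is to prove the abstract statement by a routine three-$\varepsilon$ argument and then to feed the propositions of Section~\ref{SecConver} into the resulting diagram in order to deduce Theorem~\ref{TheoremMain1}.

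For the abstract theorem I would argue through the portmanteau criterion: it suffices to show $\bbE[f(X_N)]\to\bbE[f(X)]$ for every bounded uniformly continuous $f\colon\mathcal{S}\to\R$. Fixing $\eta>0$ and choosing $\varepsilon>0$ from uniform continuity so that $d_{\mathcal{S}}(x,y)\le\varepsilon$ forces $|f(x)-f(y)|\le\eta$, I would insert $X_N^{(k)}$ and $X^{(k)}$ and split
\begin{equation*}
\bigl|\bbE[f(X_N)]-\bbE[f(X)]\bigr|\le \bbE\bigl|f(X_N)-f(X_N^{(k)})\bigr|+\bigl|\bbE[f(X_N^{(k)})]-\bbE[f(X^{(k)})]\bigr|+\bigl|\bbE[f(X^{(k)})]-\bbE[f(X)]\bigr|.
\end{equation*}
The first term is at most $\eta+2\|f\|_\infty\,\bbP\bigl(d_{\mathcal{S}}(X_N,X_N^{(k)})\ge\varepsilon\bigr)$ since $X_N$ and $X_N^{(k)}$ are defined on the same probability space; for $k$ fixed the second term vanishes as $N\to\infty$ (top arrow); and the third term vanishes as $k\to\infty$ (right arrow). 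Taking $\limsup_{N\to\infty}$ and then letting $k\to\infty$, the surviving probability term tends to $0$ by the hypothesis of convergence in probability uniformly in $N$ (left arrow), leaving $\limsup_{N\to\infty}\bigl|\bbE[f(X_N)]-\bbE[f(X)]\bigr|\le\eta$; since $\eta$ is arbitrary the claim follows.

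To conclude Theorem~\ref{TheoremMain1} I would then instantiate the diagram on the compact — hence separable — metric space $(\X,d_H)$, with $X_N^{(k)}=\Idtr{\hb_N}$, $X^{(k)}=\Ictr{\hb}$, $X_N=\Id{\hb_N}=\Idp{\beta_N}$ and $X=\Ic{\hb}$; on the Skorokhod coupling of Lemma~\ref{RSko} the pair $\Idtr{\hb_N},\Id{\hb_N}$ does live on one common space, as required. The top arrow (for fixed $k$, $N\to\infty$, in distribution) is Proposition~\ref{PropConU}, part~\eqref{con1}, which in fact gives convergence in probability under the coupling; the left arrow ($k\to\infty$ in probability, uniformly in $N$) is precisely Proposition~\ref{PropConI}, part~\eqref{con2}; and the right arrow ($k\to\infty$ in distribution) follows from Proposition~\ref{PropConI}, part~\eqref{con5.1}, which yields $\Ictr{\hb}\to\Ic{\hb}$ even $\mathbb{P}$-a.s. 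The abstract theorem then delivers $\Idp{\beta_N}=\Id{\hb_N}\tolaw\Ic{\hb}$ on $(\X,d_H)$.

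I do not expect a genuine obstacle at this stage: all the analytic content — the uniform-in-$N$ control of the truncation error (resting on the extreme-value estimates imported from \cite{HM07}) and the identification of the $k\to\infty$ maximizer — has already been isolated in Propositions~\ref{PropConU} and~\ref{PropConI}, and what remains is a soft diagram chase. The only points that demand care are checking that the various ``convergence in probability under the coupling'' statements do upgrade to convergence in law (they do, convergence in probability implying convergence in distribution), and that the order of limits ($\limsup_{N\to\infty}$ first, then $k\to\infty$) is respected throughout.
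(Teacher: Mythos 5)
Your proof of the abstract diagram theorem is correct, and the subsequent verification of its hypotheses matches the paper exactly (top arrow from Proposition~\ref{PropConU}\eqref{con1}, left arrow from Proposition~\ref{PropConI}\eqref{con2}, right arrow from Proposition~\ref{PropConI}\eqref{con5.1}). The difference is that the paper does not prove this abstract statement at all: it simply records it as a rephrasing of \cite[Theorem~3.2]{B99} and moves directly to the instantiation. Your three-$\varepsilon$ argument via bounded uniformly continuous test functions — split $|\bbE f(X_N)-\bbE f(X)|$ through $X_N^{(k)}$ and $X^{(k)}$, absorb the coupling error using uniform continuity plus $2\|f\|_\infty\,\bbP(d_{\mathcal{S}}(X_N,X_N^{(k)})\ge\varepsilon)$, take $\limsup_{N}$ and then $k\to\infty$ — is precisely the proof that Billingsley gives of his Theorem~3.2, so it correctly supplies the detail the paper delegates to the reference. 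The only point worth making explicit is the portmanteau fact you invoke, that convergence in distribution in a metric space is equivalent to convergence of expectations against bounded \emph{uniformly} continuous $f$ (not merely bounded continuous); this is standard and is what makes the $\eta$ in the first term uniform in $N$ and $k$. Your closing remarks about the order of limits and about in-probability convergence under the coupling upgrading to convergence in law are both correct and are exactly the right things to check.
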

In our case we have $X_N^{(k)}=\Idtr{\hb_N}$, $X^{(k)}=\Ictr{\hb}$, $X_N=\Id{\hb_N}$ and $X=\Ic{\hb}$ and by Propositions \ref {PropConU} and \ref{PropConI} the diagram above holds.
\begin{remark}\label{RemOnTh1}
Let us stress that under the coupling introduced in Lemma \ref{RSko} we have that in Theorem \ref{TheoremMain1} the convergence of $\Id{\hb_N}$ to $\Ic{\hb}$ holds in probability, namely for any $\varepsilon, \delta>0$ one has $\mathbb{P}\left(d_H(\Id{\hb_N},\Ic{\hb})<\varepsilon\right)>1-\delta$ for all $N$ large enough. This follows by Part (\ref{con1}) of Proposition \ref{PropConU} and Parts (\ref{con5.1}), (\ref{con2}) of Proposition \ref{PropConI}.
\end{remark}

\section{Concentration}\label{SecConcen}
In this section we discuss the concentration of $\tau/N\cap [0,1]$ around the set $\Id{\hb_N}$, cf. (\ref{eqIdhat}), giving a proof of Theorems \ref{TheoremMain2}, \ref{TheoremMain3}.

\subsection{General Setting of the Section}\label{Sech3} Let us stress that in the pinning model (\ref{eqIntr1}) we can replace $h>0$ in the exponent of the Radon-Nikodym derivative by $h=0$ by replacing the original renewal $\tau$ with a new one, $\tilde{\tau}$ defined by $\mathrm{P}(\tilde{\tau}_1=n)=e^{h}\mathrm{P}(\tau_1=n)$ and $\mathrm{P}(\tilde{\tau}_1=\infty)=1-e^{h}$. Note that the renewal process $\tilde{\tau}$ is terminating because $h<0$.
In this case (cf. 
\ref{Aa1}) the renewal function $\tilde{u}(n):=\mathrm{P}(n\in\tilde{\tau})$ satisfies
\begin{equation}\label{eqIntr8}
 \lim_{n\to\infty} \frac{\log \tilde{u}(n)}{ n^{\gamma}}=-\mathtt{c},
\end{equation}
with the same $\gamma$ and $\mathtt{c}$ used in Assumptions \ref{ASSrenP} for the original renewal process $\tau$. 

In the sequel we assume $\mathtt{c}=1$ (as already discussed in Section \ref{SecConver}), $h=0$ and we omit the tilde-sign on the notations, writing simply $\tau$ and $u(\cdot)$ instead of $\tilde{\tau}$ and $\tilde{u}(\cdot)$.

\subsection{Proof of Theorem \ref{TheoremMain2}}
To prove Theorem \ref{TheoremMain2} we proceed in two steps. In the first one we consider a truncated version of the Gibbs measure (\ref{eqIntr1}) in which we regard only the first $k$-maxima among $\omega_1,\cdots,\omega_{N-1}$ and we prove concentration for such truncated pinning model, cf. Lemma \ref{lemmaConc}. In the second step we show how to deduce Theorem \ref{TheoremMain2}.

\smallskip

Let us define the truncated pinning model. For technical reasons it is useful to write the energy using $\hat{\sigma}_N$ defined in (\ref{eqResc}).

\begin{definition}\label{DefTruGibb} For $N,k \in \mathbb{N}$, $\beta>0$, the $k$-truncated Pinning Model measure is a probability measure defined by the following Radon-Nikodym derivative
\begin{equation}\label{eqGibbrestr}
\frac{\textrm{d} \hat{\mathrm{P}}_{\beta,N}^{(k)} }{\textrm{d}\mathrm{P}_N}(I)=
 \frac{e^{N^{\gamma}\beta\sDtr(I)}\mathds{1}(1\in I)}{\hat{\mathrm{Z}}_{\beta,N}^{(k)}},
\end{equation}
where $\mathrm{P}_N$ is the law of $\tau/N\cap[0,1]$ used in (\ref{eqIntr1}).

In the sequel we use the convention that whenever $k\geq N$, the superscript $(k)$ will be omitted.
\end{definition}
\begin{remark}
Note that whenever $\beta=\hat{\beta}_N$, the Radon-Nikodym derivative \eqref{eqGibbrestr} with $k\geq N$ recovers the original definition \eqref{eqIntr1} with $\beta=\beta_N$.
\end{remark}

\begin{lemma}\label{lemmaConc}
 Let $(\hat{\beta}_N)_N$ be a sequence converging to $\hat{\beta}\in (0,\infty)$.
 For any fixed $\varepsilon, \delta>0$ there exist $\nu=\nu(\varepsilon,\delta)>0$, $K=K(\varepsilon,\delta)$ and  $({N}_k)_{k\geq K}$ such that 
 \begin{equation}
   \mathbb{P}\left( \hat{\mathrm{P}}_{\hb_N,N}^{(k)}\left(d_H(I,\Idtr{\hb_N})>\delta\right)\leq e^{-N^{\gamma}\nu}\right)>1-\varepsilon
 \end{equation}
for all $k>K$ and $N>{N}_k$.
\end{lemma}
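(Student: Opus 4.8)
The strategy is to show that the Gibbs weight concentrates exponentially (at scale $N^\gamma$) on configurations $I$ that are Hausdorff-close to the maximizer $\Idtr{\hb_N}$. The key observation is that the partition function $\hat{\mathrm{Z}}_{\hb_N,N}^{(k)}$ is governed, on the exponential scale, by the maximal value $N^\gamma\hb_N\sDtr(\Idtr{\hb_N})$ of the energy over discrete configurations, while the total mass of configurations far from $\Idtr{\hb_N}$ is governed by the strictly smaller value $N^\gamma\hb_N\sDtr$ restricted to that far region. The entropic cost of a configuration $I\in\Xdi$ with $|I\cap\{1/N,\dots,(N-1)/N\}|=\ell$ is quantified by the renewal function through $\mathrm{P}_N(I)=\prod u(\text{gaps})\asymp \exp(-\sum(\text{gaps})^\gamma)=\exp(-N^\gamma E(I))$ up to subexponential corrections, by \eqref{eqIntr8} and Assumption \ref{ASSrenP}\eqref{A1}; this is exactly the $-E(I)$ term in $\Udtr{\beta}$.

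\textbf{Key steps.} First I would obtain a lower bound on the partition function: restricting the sum defining $\hat{\mathrm{Z}}_{\hb_N,N}^{(k)}$ to the single configuration $\Idtr{\hb_N}$ (or rather its discretization, noting $\Idtr{\hb_N}\in\Xdi$ by Theorem \ref{ThmExistence1} when $k\ge N$, and for $k<N$ a nearby discrete set) gives $\hat{\mathrm{Z}}_{\hb_N,N}^{(k)}\ge e^{N^\gamma\udtr{\hb_N}+o(N^\gamma)}$ with high $\mathbb{P}$-probability, using the renewal estimate \eqref{eqIntr8} for $\mathrm{P}_N(\Idtr{\hb_N})$ and the definition \eqref{eqVarEExyz} of $\udtr{\hb_N}$. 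Second, I would upper-bound the numerator $\sum_{I:d_H(I,\Idtr{\hb_N})>\delta}e^{N^\gamma\hb_N\sDtr(I)}\mathrm{P}_N(I)\mathds{1}(1\in I)$: each such $I$ satisfies $\hb_N\sDtr(I)-E(I)+(\text{correction})\le \udtr{\hb_N}(\delta)$ by Definition \ref{defudtrdek}, where the correction absorbs the difference between $-\log\mathrm{P}_N(I)$ and $N^\gamma E(I)$; summing over the at-most-$2^N$ configurations and controlling this polynomial-in-complexity count against the exponential gap yields the bound $e^{N^\gamma\udtr{\hb_N}(\delta)+o(N^\gamma)}$. Third, combining the two bounds gives $\hat{\mathrm{P}}_{\hb_N,N}^{(k)}(d_H(I,\Idtr{\hb_N})>\delta)\le e^{-N^\gamma(\udtr{\hb_N}-\udtr{\hb_N}(\delta))+o(N^\gamma)}$, and Part \eqref{con6} of Proposition \ref{PropConU} guarantees that $\udtr{\hb_N}-\udtr{\hb_N}(\delta)>\eta>0$ with probability $>1-\varepsilon$ for $k>K$ and $N>N_k$, so we may take $\nu=\eta/2$.

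\textbf{Main obstacle.} The delicate point is controlling the sum over all configurations in the numerator: unlike in the truncated variational problem over $\mathcal{C}_k$, the measure $\hat{\mathrm{P}}_{\hb_N,N}^{(k)}$ still sums over all of $\Xdi$, which has exponentially many (in $N$) elements, so the energy $\sDtr$ only sees the first $k$ maxima but the entropy and the cardinality of the sum do not truncate. One must show that configurations using positions outside $Y^{(N,k)}$ — which gain nothing in energy but pay in entropy — contribute negligibly; this requires a uniform entropic estimate showing $\sum_{I\in\Xdi}e^{-cN^\gamma E(I)}$ is subexponential in the sense $e^{o(N^\gamma)}$, which follows from the structure of $E$ (each added point strictly increases $E$, Corollary \ref{CorLSC1}) together with the stretched-exponential decay $u(n)\asymp e^{-n^\gamma}$ that makes long gaps cheap but many short gaps expensive. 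A secondary technical nuisance is the passage between the discrete entropy $-\log\mathrm{P}_N(I)/N^\gamma$ and the continuum functional $E(I)$, uniformly over the relevant configurations, which relies on \eqref{eqIntr8} holding with sufficient uniformity — this is where the subexponential Assumption \ref{ASSrenP}\eqref{A1} and the stretched-exponential Assumption \ref{ASSrenP}\eqref{A2} are both used. I expect this step to mirror, and in fact simplify, the corresponding argument in \cite{HM07}.
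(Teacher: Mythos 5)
Your overall strategy (lower bound $\hat{\mathrm{Z}}_{\hb_N,N}^{(k)}$ by the contribution of the maximizer, upper bound the far region, invoke the gap from Part~\eqref{con6}) is the right one, and your partition function lower bound and the final invocation of Proposition~\ref{PropConU}\eqref{con6} match the paper. But your Step~2 as written does not close, and you sense this but propose the wrong fix.

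The concrete problem: you want to bound $\sum_{I:\, d_H(I,\Idtr{\hb_N})>\delta}e^{N^\gamma\hb_N\sDtr(I)}\mathrm{P}_N(I)$ by replacing $\mathrm{P}_N(I)$ with $e^{-N^\gamma E(I)+o(N^\gamma)}$, using $\Udtr{\hb_N}(I)\leq\udtr{\hb_N}(\delta)$ for each term, and then counting configurations. Two things go wrong. First, the asymptotics of Proposition~\ref{propoConc1} (i.e.\ $\mathrm{P}_N(\iota\subset I)\asymp e^{-N^\gamma E(\iota)+o(N^\gamma)}$) hold uniformly only over sets $\iota$ whose points are spaced at least $\xi$ apart; over all $I\in\Xdi$ the correction is \emph{not} $o(N^\gamma)$ uniformly. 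Second, and more fatally, even if the approximation were uniform, the count of configurations is of order $2^N=e^{N\log 2}$, and since $\gamma<1$ this factor swamps any gap $e^{-N^\gamma\eta}$: your proposed ``uniform entropic estimate'' $\sum_{I\in\Xdi}e^{-cN^\gamma E(I)}=e^{o(N^\gamma)}$ is false (there are already $\binom{N}{\ell}$ configurations with $\ell$ points and entropy $O(1)$ when the points are spread out). The correct resolution — which the paper uses — is to not sum over $I$ at all but to condition on the \emph{skeleton} $\iota=I\cap Y^{(N,k)}$, which takes at most $2^k$ values; since $\sDtr$ only sees the skeleton, $\sum_{I:\,I\cap Y^{(N,k)}=\iota}e^{N^\gamma\hb_N\sDtr(I)}\mathrm{P}_N(I)=e^{N^\gamma\hb_N\sDtr(\iota)}\,\mathrm{P}_N(I_{(N,k)}=\iota)\le e^{N^\gamma\hb_N\sDtr(\iota)}\,\mathrm{P}_N(\iota\subset I)$, and now the renewal asymptotics apply uniformly to the $\le 2^k$ well-spaced skeletons (on the good event \eqref{eqDist1}), eliminating the $2^N$ problem.

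This reveals a second, structural gap in your proposal: once you condition on the skeleton $\iota$, the exponent you control is $\Udtr{\hb_N}(\iota)$, \emph{not} $\Udtr{\hb_N}(I)$, and if $d_H(\iota,\Idtr{\hb_N})<\delta/2$ while $d_H(I,\Idtr{\hb_N})>\delta$ (i.e.\ $I$ has points far from its own skeleton), the variational bound $\Udtr{\hb_N}(\iota)\leq\udtr{\hb_N}(\delta)$ is simply unavailable — $\iota$ may nearly achieve $\udtr{\hb_N}$. The paper therefore splits the far event into two, as in \eqref{eqEs1set}: (i) $d_H(\iota,\Idtr{\hb_N})\geq\delta/2$, handled by the variational gap as above, and (ii) $d_H(\iota,\Idtr{\hb_N})<\delta/2$ but $d_H(I,\iota)>\delta/2$, handled by a separate, purely entropic renewal estimate (see \eqref{eqTRQPR}): inserting a point $x$ with $d(x,\iota)>\delta/2$ into a renewal gap costs a factor $u(N(x-\iota_j))u(N(\iota_{j+1}-x))/u(N(\iota_{j+1}-\iota_j))\leq e^{-N^\gamma(2^{1-\gamma}-1)\delta^\gamma+o(N^\gamma)}$, uniformly, and a union bound over the $\leq N$ possible positions of $x$ and $\leq 2^k$ skeletons then closes the argument. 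This second case is absent from your plan and cannot be subsumed under $\udtr{\hb_N}(\delta)$.
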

 Roughly speaking to prove Lemma \ref{lemmaConc} we need to estimate the probability that a given set $\iota=\{\iota_1,\cdots,\iota_{\ell}\}$, with $\iota_j<\iota_{j+1}$, is contained in $\tau/N$. In other words we need to compute the probability that $\iota_1,\cdots,\iota_{\ell}\in \tau/N$ when $N$ is large enough. 

For this purpose we fix $\iota=\{\iota_1,\cdots,\iota_{\ell}\}\subset [0,1]$ and we consider $\iota^{(N)}=\{\iota_1^{(N)},\cdots,\iota_{\ell}^{(N)}\}$, where $\iota_i^{(N)}$ is the nearest point to $\iota_i$ in the lattice $\{0,1/N,\cdots, 1\}$. We define $u_N(\iota)=\prod_{i=1}^{\ell}u(N(\iota_i^{(N)}-\iota_{i-1}^{(N)}))$ 
, with $\iota_0^{(N)}:=0$.
The behavior of $u_N(\iota)$ as $N\to\infty$ is given by the following result
\begin{proposition}\label{propoConc1}
Let $\iota=\{\iota_1,\cdots,\iota_{\ell}\}\subset [0,1]$ be a fixed and finite set and consider the associated real sequence $(u_N	(\iota))_N$. Then $\lim_{N\to\infty}\frac{1}{N^{\gamma}}\log u_N(\iota)=-\sum_{i=1}^{\ell}(\iota_{i}-\iota_{i-1})^{\gamma}$ and it holds uniformly in the space of all subsets $\iota$ with points spaced at least by $\xi$, for any fixed $\xi>0$.
\end{proposition}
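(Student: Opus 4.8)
\textbf{Proof plan for Proposition \ref{propoConc1}.}
The plan is to reduce the statement to the asymptotics \eqref{eqIntr8} of the single renewal function $\log u(n)/n^{\gamma}\to -1$ (recall $\mathtt{c}=1$), applied to each of the $\ell$ gaps $N(\iota_i^{(N)}-\iota_{i-1}^{(N)})$. Writing $\frac{1}{N^{\gamma}}\log u_N(\iota)=\sum_{i=1}^{\ell}\frac{1}{N^{\gamma}}\log u\bigl(N(\iota_i^{(N)}-\iota_{i-1}^{(N)})\bigr)$, the first step is to observe that since the $\iota_i$ are spaced by at least $\xi$, the discretized gaps satisfy $N(\iota_i^{(N)}-\iota_{i-1}^{(N)})=N(\iota_i-\iota_{i-1})(1+o(1))$ with the error uniform over all admissible $\iota$, and in particular $N(\iota_i^{(N)}-\iota_{i-1}^{(N)})\to\infty$. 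Hence each term $\frac{1}{N^{\gamma}}\log u\bigl(N(\iota_i^{(N)}-\iota_{i-1}^{(N)})\bigr)$ is of the form $\frac{m^{\gamma}}{N^{\gamma}}\cdot\frac{\log u(m)}{m^{\gamma}}$ with $m=m(N,i)\to\infty$ and $m/N\to \iota_i-\iota_{i-1}$, so it converges to $-(\iota_i-\iota_{i-1})^{\gamma}$; summing over $i$ gives the pointwise limit.

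The real work is upgrading this to \emph{uniformity} over the set of all finite $\iota\subset[0,1]$ whose consecutive points (including the implicit endpoints used in the definition of $u_N$) are $\xi$-separated. Two things need uniform control. First, the number of points $\ell$ is bounded by $1/\xi+1$, so the sum has a bounded number of terms and it suffices to control each term uniformly. Second, and this is the main obstacle: the convergence $\log u(m)/m^{\gamma}\to -1$ must be turned into a statement that is uniform in how $m\to\infty$, i.e.\ one needs for every $\epsilon>0$ an $m_0$ such that $\bigl|\log u(m)/m^{\gamma}+1\bigr|<\epsilon$ for all $m\ge m_0$ — which is exactly what \eqref{eqIntr8} gives — combined with the fact that the $\xi$-separation forces $m(N,i)\ge \tfrac{\xi}{2}N$ for $N$ large, hence $m(N,i)\ge m_0$ simultaneously for all admissible $\iota$ once $N\ge 2m_0/\xi$. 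On that event we get $\bigl|\frac{1}{N^{\gamma}}\log u(m(N,i))+(m(N,i)/N)^{\gamma}\bigr|\le \epsilon (m(N,i)/N)^{\gamma}\le\epsilon$, and separately $\bigl|(m(N,i)/N)^{\gamma}-(\iota_i-\iota_{i-1})^{\gamma}\bigr|\to 0$ uniformly because $x\mapsto x^{\gamma}$ is uniformly continuous on $[0,1]$ and $|m(N,i)/N-(\iota_i-\iota_{i-1})|\le 1/N$.

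Putting the pieces together: fix $\epsilon>0$, choose $m_0$ from \eqref{eqIntr8}, and choose $N$ large enough that (i) $N\ge 2m_0/\xi$, (ii) $\sup_{0\le x\le 1/N}\bigl((t+x)^{\gamma}-t^{\gamma}\bigr)$ is small uniformly in $t$ (using $\gamma<1$ and the concavity/uniform continuity of $t\mapsto t^\gamma$), and (iii) $\ell\le 1/\xi+1$. Then for every admissible $\iota$,
\begin{equation}
\left|\frac{1}{N^{\gamma}}\log u_N(\iota)+\sum_{i=1}^{\ell}(\iota_i-\iota_{i-1})^{\gamma}\right|\le \sum_{i=1}^{\ell}\left(\epsilon + \bigl|(m(N,i)/N)^{\gamma}-(\iota_i-\iota_{i-1})^{\gamma}\bigr|\right)\le \left(\tfrac{1}{\xi}+1\right)\bigl(\epsilon+o(1)\bigr),
\end{equation}
which is arbitrarily small for $N$ large, uniformly in $\iota$. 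The one technical subtlety to be careful about is the boundary behavior of the discretization $\iota_i\mapsto\iota_i^{(N)}$ near $0$ and $1$ and the treatment of $\iota_0^{(N)}=0$: since $\iota_1\ge\xi$ and the last gap is likewise $\ge\xi$, rounding to the nearest lattice point changes each gap by at most $1/N$, so no gap can collapse and the estimate above applies verbatim. This is where \eqref{eqIntr8} — i.e.\ Assumption \ref{ASSrenP}\eqref{A2} transported to the terminating renewal as in Section \ref{Sech3} — is used in an essential way, and it is the only nontrivial input.
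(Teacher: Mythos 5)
Your proof is correct and takes essentially the same approach as the paper's (which is only a brief sketch): both reduce to \eqref{eqIntr8}, observe that the $\xi$-separation forces each discretized gap $N(\iota_i^{(N)}-\iota_{i-1}^{(N)})\ge \xi N/2$ uniformly once $N>2/\xi$, and conclude by noting that $u_N(\iota)$ has at most $1/\xi+1$ factors. You simply make the $\epsilon$-bookkeeping and the uniform-continuity step for $t\mapsto t^\gamma$ explicit, which the paper leaves implicit.
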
 
\begin{proof}
The convergence for a fixed set is a consequence of (\ref{eqIntr8}). To prove the uniformity we note that if $\iota_i-\iota_{i-1}>\xi$, then $\iota_i^{(N)}-\iota_{i-1}^{(N)}>\xi/2$ as soon as $1/N<\xi/2$, which is independent of such $\iota$. This shows the claim for all such $\iota$ with two points and this concludes the proof because 
 $u_N(\iota)$ 
 is given by at most $\frac{1}{\xi}+1$-factors in this form.
\end{proof}

Another simple, but important, observation is that for a fixed $k\in \mathbb{N}$, with high probability the minimal distance between $Y_1^{(N)},\cdots,Y_k^{(N)}$ (the positions of the first $k$-maxima introduced in Section \ref{SecEEVE}) cannot be too small even if $N$ gets large. To be more precise, by using Lemma \ref{RSko}, we have that for any fixed $\varepsilon>0$ and $k\in\mathbb{N}$ there exist $\xi=\xi(k,\varepsilon)>0$ and ${N}_k$ such that for any $N>N_k$ the event
 \begin{equation}\label{eqDist1}
 \left\{\left|Y_i^{(N)}-Y_j^{(N)}\right|>{\xi},\, Y_{\ell}^{(N)}\in ({\xi}, 1-{\xi}),\, \forall\, \ell \textrm{, } i\neq j \in\{1,\cdots,k\}\right\}
 \end{equation}
 has probability larger than $1-\varepsilon$.
 By Proposition \ref{propoConc1} this implies that for any fixed $ \zeta>0$ on the event (\ref{eqDist1}), for all $N$ large enough and uniformly on $\iota=\{\iota_0=0<\iota_1<\cdots<\iota_{\ell}<1=\iota_{\ell+1}\}\subset Y^{(N,k)}$, cf. (\ref{RemStru11}), it holds that
\begin{align}\label{eqRF1}
 &e^{-N^{\gamma}E(\iota)-\zeta N^{\gamma}}\leq \mathrm{P}(\iota_1,\cdots,\iota_{\ell}\in \tau/N)\leq e^{-N^{\gamma}E(\iota)+\zeta N^{\gamma}},
\end{align}
where $E(\iota)=\sum_{i=1}^{\ell+1}(\iota_i-\iota_{i-1})^{\gamma}$ is the entropy of the set $\iota$, cf. (\ref{defentropyfiniteset}).

\begin{proof} [Proof of Lemma \ref{lemmaConc}]
The aim of this proof is to show that for any given $\delta>0$ and $k\in\mathbb{N}$ large enough, $\hat{\mathrm{P}}_{\hb_N,N}^{(k)}\left(d_H(\Idtr{\hb_N},I)>\delta\right)\to 0$ as $N\to\infty$, with an explicit rate of convergence. Our strategy is the following: given a set $I\subset\{0,1/N,\cdots, 1\}$, with $0,1\in I$, we consider 
\begin{equation}\label{eqHT:INKYNK}
I_{(N,k)}:=I\cap Y^{(N,k)},
\end{equation} the intersection of $I$ with the set of the positions of the first $k$-maxima: it can have distance larger or smaller than $\frac{\delta}{2}$ from $\Idtr{\hb_N}$. This induces a partition of the set of all possible $I$'s. This allows us to get the following inclusion of events
\begin{align}\label{eqEs1set}
&\left\{d_H(\Idtr{\hb_N},I)>\delta\right\}
\subset \left\{d_H(\Idtr{\hb_N},I_{(N,k)})\geq \frac{\delta}{2} \right\}\cup \left\{d_H(\Idtr{\hb_N}, I_{(N,k)})< \frac{\delta}{2}, d_H(I_{(N,k)},I)> \frac{\delta}{2} \right\}.
\end{align}
We have thus to prove our statement for 
 \begin{align}
 & \hat{\mathrm{P}}_{\hb_N,N}^{(k)}\left(d_H(\Idtr{\hb_N},I_{(N,k)})\geq \frac{\delta}{2}\right),\label{eqConc1}\\
  &\hat{\mathrm{P}}_{\hb_N,N}^{(k)}\left(d_H(\Idtr{\hb_N}, I_{(N,k)})< \frac{\delta}{2}, d_H(I,I_{(N,k)})> \frac{\delta}{2}\right).\label{eqConc2}
 \end{align}
 For this purpose we fix $\varepsilon>0$ and $\xi=\xi(\varepsilon,k)>0, \, N_k>0$ such that the event \eqref{eqDist1} holds with probability larger than $1-\varepsilon$, for any $N>N_k$. 
 
 \smallskip


Our goal is to find a good upper bound for (\ref{eqConc1}) and (\ref{eqConc2}). 
Let us start to consider (\ref{eqConc1}). Let $A$ be the set of all possible values of $I_{(N,k)}$, cf. \eqref{eqHT:INKYNK}, on the event $\{d_H(\Idtr{\hb_N},I_{(N,k)})\geq \frac{\delta}{2}\}$, namely
\begin{equation}
 A=\left\{\iota\subset Y^{(N,k)}: d_H(\iota,\Idtr{\hb_N})\geq \frac{\delta}{2} \textrm{ and } 0,1\in \iota\right\}.
\end{equation}
An upper bound of \eqref{eqConc1}) is
\begin{equation}
\hat{\mathrm{P}}_{\hb_N,N}^{(k)}\left(d_H(\Idtr{\hb_N},I_{(N,k)})\geq \frac{\delta}{2}\right)\, \leq\,  \displaystyle\sum_{\iota \in A} \hat{\mathrm{P}}_{\hb_N,N}^{(k)} (I_{(N,k)}=\iota).
\end{equation}

Let us fix $\zeta>0$ (we choose in a while its precise value) and assume that Relation \eqref{eqRF1} holds if $N_k$ is sufficiently large. Then
\begin{align}\label{eqTrkok}
 &\hat{\mathrm{P}}_{\hb_N,N}^{(k)} (I_{(N,k)}=\iota)=\frac{{\mathrm{E}}_{N}\left(e^{N^{\gamma} \hb_N\sDtr\left(I\right)}\mathds{1}({I_{(N,k)}=\iota}); 1\in \tau/N\right)}{{\mathrm{E}}_{N}\left( e^{N^{\gamma} \hb_N\sDtr\left(I\right)}; 1\in \tau/N\right)} 
 \leq  \frac{e^{ N^{\gamma} \hb_N\sDtr\left(\iota\right)}{\mathrm{P}}_N\left(\iota\subset I\right)}{e^{N^{\gamma} \hb_N\sDtr\left(\Idtr{\hb_N}\right)}{\mathrm{P}}_N\left(\Idtr{\hb_N}\subset I\right)}\overset{\eqref{eqRF1}}{\leq} 
  \\\nonumber
 \leq & \exp\left\{-N^{\gamma}(\udtr{\hb_N}-\Udtr{\hb_N}(\iota))+2N^{\gamma}\zeta\right\}\leq \exp\left\{-N^{\gamma}(\udtr{\hb_N}-\udtr{\hb_N}(\delta/2))+2N^{\gamma}\zeta\right\},
\end{align}
where $\Udtr{\hb_N}$ has been introduced in Definition \ref{EEdef1}.
By Proposition \ref{PropConU}, Part (\ref{con6}), if $k$ and $N_k$ are taken large enough, it holds that $\udtr{\hb_N}-\udtr{\hb_N}(\delta/2)>\eta$, for some $\eta>0$, with probability larger than $1-\varepsilon$.
We conclude that if $\zeta$ in (\ref{eqRF1}) is chosen smaller than $\eta/4$, then the l.h.s. of \eqref{eqTrkok} is bounded by $e^{-N^{\gamma}\frac{\eta}{2}}$, uniformly in $\iota\in A$.
By observing that $A$ has at most $2^k$ elements we conclude that
\begin{equation}
\eqref{eqConc1}\leq \displaystyle\sum_{\iota \in A} \hat{\mathrm{P}}_{\hb_N,N}^{(k)} (I_{(N,k)}=\iota)\leq |A|e^{-N^{\gamma}\eta/2}\leq 2^k e^{-N^{\gamma}\eta/2}.
\end{equation}

For (\ref{eqConc2}) we use the same strategy: Let $B$ be the set of all possible values of $I_{(N,k)}$, cf. \eqref{eqHT:INKYNK}, on the event $\{d_H(\Idtr{\hb_N}, I_{(N,k)})< \frac{\delta}{2}\}$,
\begin{equation}
 B=\left\{\iota\subset Y^{(N,k)}: d_H(\iota,\Idtr{\hb_N})< \frac{\delta}{2} \textrm{ and } 0,1\in \iota\right\}, 
\end{equation}
Then
\begin{equation}
\hat{\mathrm{P}}_{\hb_N,N}^{(k)}\left(d_H(\Idtr{\hb_N}, I_{(N,k)})< \frac{\delta}{2}, d_H(I,I_{(N,k)})> \frac{\delta}{2}\right)\,\leq\, \displaystyle\sum_{\iota \in B} \hat{\mathrm{P}}_{\hb_N,N}^{(k)}\left(d_H\left(\iota,I\right)>\frac{\delta}{2}, I_{(N,k)}=\iota\right).
\end{equation}
Let us observe that for such a given $\iota$
\begin{align}
&\hat{\mathrm{P}}_{\hb_N,N}^{(k)}\left(d_H\left(\iota,I\right)>\frac{\delta}{2} , I_{(N,k)}=\iota\right)=\frac{{\mathrm{E}}_{N}\left(e^{N^{\gamma} \hb_N\sDtr\left(I\right)}\mathds{1}({d_H\left(\iota,I\right)>\frac{\delta}{2}, I_{(N,k)}=\iota}); 1\in I\right)}{{\mathrm{E}}_{N}\left( e^{N^{\gamma} \hb_N\sDtr\left(I\right)}; 1\in I\right)} \\\nonumber
& \leq  \frac{{\mathrm{P}}_N\left(d_H\left(\iota,I\right)>\frac{\delta}{2}, I_{(N,k)}=\iota\right)}{{\mathrm{P}}_N\left(\iota\subset I\right)}.
\end{align}

We have reduced our problem to compute the probability of the event $\left\{d_H\left(\iota,I\right)>\frac{\delta}{2}, I_{(N,k)}=\iota\right\}$ under the original renewal distribution $\mathrm{P}_N$. 

Note that, if $\iota\subset I$, then $d_H\left(\iota,I\right)>\frac{\delta}{2}$ if and only if there exists $x\in I$ such that $d(x,\iota)>\frac{\delta}{2}$. Thus 
\begin{equation}\label{HT:setequaliy}
\left\{d_H\left(\iota,I\right)>\frac{\delta}{2}, I_{(N,k)}=\iota\right\}\, =\,\left\{\exists x\in I , d(x,\iota)>\frac{\delta}{2}, I_{(N,k)}=\iota\right\}.
\end{equation}

For $\iota=\{\iota_0=0<\iota_1<\cdots<\iota_{\ell}=1\}\in B$, we define $U_{j,\delta}:=[\iota_j+\frac{\delta}{2},\iota_{j+1}-\frac{\delta}{2}]\cap \frac{\mathbb{N}}{N}$, which is empty if the distance between $\iota_j$ and $\iota_{j+1}$ is strictly smaller than $\delta$. 
We can decompose the event \eqref{HT:setequaliy} by using such $U_{j,\delta}$, i.e., $ \{\exists x\in I , d(x,\iota)>\frac{\delta}{2}, I_{(N,k)}=\iota\}=\bigcup_{j=0}^{\ell-1}\bigcup_{x\in U_{j,\delta}}\{x\in I , I_{(N,k)}=\iota\}$, and we get
\begin{align}
 &{\mathrm{P}}_{N}\left(d_H\left(\iota,I\right)>\delta, I_{(N,k)}=\iota\right)
 \leq\displaystyle\sum_{j=0}^{\ell-1}\displaystyle\sum_{x\in U_{j,\delta}}{\mathrm{P}}_{N}\left(x\in I , I_{(N,k)}=\iota\right) \leq\displaystyle\sum_{j=0}^{\ell-1}\displaystyle\sum_{x\in U_{j,\delta}}{\mathrm{P}}_{N}\left(x\in I, \iota\subset I\right).
\end{align}

Let us consider ${\mathrm{P}}_{N}\left(x\in I, \iota\subset I\right)$. Since $x$ does not belong to $\iota$, there exists an index $j$ such that $\iota_j<x<\iota_{j+1}$. Then 
, recalling that $u(n)=\mathrm{P}(n\in\tau)$,
\begin{equation}\label{eqTRQPR}
\begin{split}
& \frac{{\mathrm{P}}_N\left(x\in I, \iota\subset I\right)}{{\mathrm{P}}_N\left(\iota\subset I\right)}\, =
\frac{\left[\prod\limits_{\substack{k=1,\\ k\neq j}}^{\ell-1}u(N(\iota_{k+1}-\iota_k))\right]\, u(N(x-\iota_j))u(N(\iota_{j+1}-x))}{\prod\limits_{k=1}^{\ell-1}u(N(\iota_{k+1}-\iota_k))}=
\frac{u(N(x-\iota_j))u(N(\iota_{j+1}-x))}{u(N(\iota_{j+1}-\iota_j))}\\
&\qquad \overset{\eqref{eqRF1}}{\leq } e^{-N^{\gamma}\Big((x-\iota_j)^{\gamma}+(\iota_{j+1}-x)^{\gamma}-(\iota_{j+1}-\iota_j)^{\gamma}\Big)+2\zeta N^{\gamma}}\leq e^{-N^{\gamma}(2^{1-\gamma}-1)\delta^{\gamma}+2\zeta N^{\gamma}},
 \end{split}
\end{equation}
uniformly on all such $\iota_j,\iota_{j+1}$ and $x$. Note that the last inequality follows by observing that for all such $\iota_j,\iota_{j+1}$ and $x$ one has $(x-\iota_j)^{\gamma}+(\iota_{j+1}-x)^{\gamma}-(\iota_{j+1}-\iota_j)^{\gamma}\geq (2^{1-\gamma}-1)\delta^{\gamma}$.
We conclude that, making possibly further restrictions on the value of $\zeta$ as function of $\delta$, there exists a constant $C>0$ such that $ \frac{{\mathrm{P}}_N\left(\{x\}\cup \iota\subset I\right)}{{\mathrm{P}}_N\left(\iota\subset I\right)}\leq e^{-CN^{\gamma}}$ uniformly in $\iota\in B$.
This leads to have that
\begin{equation}
  \eqref{eqConc2}\, \leq\,  \sum_{\iota\in B}\hat{\mathrm{P}}_{\hb_N,N}^{(k)}\left(d_H\left(\Idtr{\hb_N},I\right)>\frac{\delta}{2}, I_{(N,k)}=\iota\right)\leq|B| N e^{-C\, N^{\gamma}}\leq 2^k N e^{-C\, N^{\gamma}}.
\end{equation}
\end{proof}

\begin{proof}[Proof of Theorem \ref{TheoremMain2}]
First of all we are going to prove concentration around $\Idtr{\hb_N}$.
Let $k>0$ be fixed. Its precise value will be chosen in the following. Then, recalling Definition \ref{DefTruGibb},
\begin{align}
  &\Gib\left(d_H\left(\Idtr{\hb_N},I\right)>\delta \right)\leq  \\
  \leq & \hat{\mathrm{P}}_{\hb_N,N}^{(k)}\left(d_H\left(\Idtr{\hb_N},I\right)>\delta \right)\cdot \sup\left\{\frac{\mathrm{d}\Gib}{\mathrm{d}\hat{\mathrm{P}}_{\hb_N,N}^{(k)}}\left(I\right): d_H\left(\Idtr{\hb_N},I\right)>\delta \right\}.\nonumber
 \end{align}
To control the first term, by Lemma \ref{lemmaConc} for any $\varepsilon,\delta>0$ there exist $\nu>0$ and ${N}_k$ such that for all $N>{N}_k$, $\hat{\mathrm{P}}_{\hb_N,N}^{(k)}\left(d_H\left(\Idtr{\hb_N},I\right)>\delta \right)\leq e^{-N^{\gamma} \nu}$ with probability larger than $1-\varepsilon$ . To control the Radon-Nikodym derivative we may write
\begin{align}
& \frac{\mathrm{d}\Gib}{\mathrm{d}\hat{\mathrm{P}}_{\hb_N,N}^{(k)}}\left(I\right)=\frac{\hat{\mathrm{Z}}_{\hat{\beta}_N,N}^{(k)}}{\hat{\mathrm{Z}}_{\hat{\beta}_N,N}}\frac{e^{ N^{\gamma} \hb_N\sD\left(I\right)}}{e^{N^{\gamma} \hb_N\sDtr\left(I\right)}}\leq  e^{\hat{\beta}_N N^{\gamma} (\hb_N\sD(I)-\hb_N\sDtr\left(I\right))}\leq e^{\hat{\beta}_N N^{\gamma} \Rnk},
\end{align}
where $\Rnk=\sum_{i>k} M_i^{(N)}$ is defined in (\ref{eqRho1}).
By using Part (\ref{Tcnlemma3}) of Proposition \ref{PropConI} we choose $k$ large enough such that $\hat{\beta}_N\Rnk<\nu/2$ with probability $1-\varepsilon$, uniformly in $N$.
This forces to have
\begin{equation}
 \mathbb{P}\left(\Gib\left(d_H\left(\Idtr{\hb_N},I\right)>\delta \right)\leq e^{-N^{\gamma}\nu/2}\right)\geq 1-2\varepsilon.
\end{equation}
The proof follows by observing that if $k$ is large enough, then $d_H(\Idtr{\hb_N},\Id{\hb_N})<\delta/2$ with probability larger than $1-\varepsilon$, uniformly on $N$, cf. Point \eqref{con2} Proposition \ref{PropConI}.
\end{proof}
\subsection{Proof of Theorem \ref{TheoremMain3}}
In this section we prove Theorem \ref{TheoremMain3}.
The proof is based on the following result

\begin{lemma}\label{lemmagibbsdet}
 Let $(\mathcal{S},d_{\mathcal{S}})$ be a metric space and let $x_N$ be a sequence converging to $\bar{x}$. Let $\mu_N\in \mathcal{M}_1(\mathcal{S})$ be such that for any $\varepsilon>0$,
  $\lim_{N\to\infty} \mu_N\left(x:d(x_N,x)>\varepsilon\right)=0.$
Then
$
 \mu_N\rightharpoonup \delta_{\bar{x}}.
$
\end{lemma}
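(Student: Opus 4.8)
The plan is to prove Lemma~\ref{lemmagibbsdet} directly from the definition of weak convergence of probability measures, i.e.\ I need to show that $\int f\,\mathrm{d}\mu_N\to\int f\,\mathrm{d}\delta_{\bar x}=f(\bar x)$ for every bounded continuous $f:\mathcal{S}\to\mathbb{R}$. Fix such an $f$ and $\eta>0$. The strategy is to split $\mathcal{S}$ into a small ball around $\bar x$, where $f$ is close to $f(\bar x)$ by continuity, and its complement, whose $\mu_N$-mass we will show is negligible using the hypothesis together with $x_N\to\bar x$.

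\begin{proof}[Proof of Lemma \ref{lemmagibbsdet}]
Let $f:\mathcal{S}\to\mathbb{R}$ be bounded and continuous, and set $M=\sup_{x\in\mathcal{S}}|f(x)|<\infty$. Fix $\eta>0$. By continuity of $f$ at $\bar{x}$ there exists $r>0$ such that $|f(x)-f(\bar{x})|<\eta$ whenever $d_{\mathcal{S}}(x,\bar{x})<r$. Since $x_N\to\bar{x}$, there is $N_0$ such that $d_{\mathcal{S}}(x_N,\bar{x})<r/2$ for all $N>N_0$; hence for such $N$, the ball $\{x:d_{\mathcal{S}}(x_N,x)<r/2\}$ is contained in $\{x:d_{\mathcal{S}}(x,\bar{x})<r\}$. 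Therefore, for $N>N_0$,
\begin{align*}
\left|\int_{\mathcal{S}} f\,\mathrm{d}\mu_N - f(\bar{x})\right|
&\leq \int_{\mathcal{S}} |f(x)-f(\bar{x})|\,\mu_N(\mathrm{d}x)\\
&= \int_{\{d_{\mathcal{S}}(x_N,x)<r/2\}} |f(x)-f(\bar{x})|\,\mu_N(\mathrm{d}x)
 + \int_{\{d_{\mathcal{S}}(x_N,x)\geq r/2\}} |f(x)-f(\bar{x})|\,\mu_N(\mathrm{d}x)\\
&\leq \eta + 2M\,\mu_N\!\left(x:d_{\mathcal{S}}(x_N,x)\geq r/2\right).
\end{align*}
By hypothesis the last term tends to $0$ as $N\to\infty$, so there is $N_1\geq N_0$ with $2M\,\mu_N(x:d_{\mathcal{S}}(x_N,x)\geq r/2)<\eta$ for all $N>N_1$. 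Hence $\left|\int f\,\mathrm{d}\mu_N - f(\bar{x})\right|<2\eta$ for all $N>N_1$. Since $\eta>0$ was arbitrary, $\int f\,\mathrm{d}\mu_N\to f(\bar{x})=\int f\,\mathrm{d}\delta_{\bar{x}}$. As $f$ was an arbitrary bounded continuous function, $\mu_N\rightharpoonup\delta_{\bar{x}}$.
\end{proof}

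There is no real obstacle here; the only point requiring a moment of care is the interplay between the two limits, namely that the hypothesis controls $\mu_N$-mass of neighbourhoods of the \emph{moving} centre $x_N$ rather than of $\bar x$, so one must first use $x_N\to\bar x$ to trap a fixed ball around $\bar x$ inside a ball around $x_N$ (of half the radius, to be safe), and only then invoke the mass bound. Everything else is the standard bounded/continuous splitting argument. This lemma is then applied in the proof of Theorem~\ref{TheoremMain3} with $\mathcal{S}=(\X,d_H)$, $x_N=\Id{\hb_N}$ (or $\Idp{\beta_N}$), $\bar x=\Ic{\hb}$, $\mu_N=\mathrm{P}_{\beta_N,h,N}^{\omega}$, the required hypothesis being exactly the concentration statement of Theorem~\ref{TheoremMain2} together with the convergence $\Id{\hb_N}\to\Ic{\hb}$ from Remark~\ref{RemOnTh1}.
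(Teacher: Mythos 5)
Your proof is correct and complete. The paper merely cites Portmanteau's Lemma without giving details; your argument writes out the elementary verification of the definition of weak convergence (the direction of Portmanteau that is actually needed), correctly handling the moving centre $x_N$ by nesting a ball about $x_N$ inside a fixed ball about $\bar{x}$ once $d_{\mathcal{S}}(x_N,\bar{x})<r/2$. So this is essentially the same approach, just spelled out.
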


 \begin{proof} 
The proof is a consequence of the Portmanteau's Lemma \cite[Section 2]{B99}. 
\end{proof}

\begin{proof}[Proof of Theorem \ref{TheoremMain3}]
 Let $\mu_N=\hat{\mathrm{P}}_{\hat{\beta}_N,N}$ and $\mu_{\infty}=\delta_{\Ic{\hb}}$. Note that $\mu_N$ is a random measure on $\X$ depending on the discrete disorder $w^{(N)}$, while $\mu_{\infty}$ depends on the continuum disorder $w^{(\infty)}$. Therefore if we couple together these disorders as in Lemma \ref{RSko} we have that by Theorems \ref{TheoremMain2}, \ref{TheoremMain1} (see Remark \ref{RemOnTh1}) $\mu_{N}\left(I \mid d_H(\Id{\hb_N},I)>\delta\right)\overset{\mathbb{P}}{\to} 0$ and $\Id{\hb_N}\overset{\mathbb{P}}{\to} \Ic{\hb}$. 
To conclude the proof let us observe that the law of $\mu_N$ is a probability measure on $\mathcal{M}_1(\X)$, the space of the probability measures on $\X$, which is a compact space because $\X$ is compact. Therefore we can assume that $\mu_N$ has a limit in distribution. We have thus to show that this limit is the law of $\mu_{\infty}$. For this purpose it is enough to show that there exists a subsequence $N_k$ such that $\mu_{N_k}\tolaw\mu_{\infty}$. It is not difficult to check that we can find a subsequence $N_k$ such that $\mu_{N_k}\left(I \mid d_H(\hat{I}_{\hb_{N_k}, N_k},I)>\delta\right)\overset{\mathbb{P}-\textrm{a.s.}}{\to} 0$ and $\hat{I}_{\hb_{N_k}, N_k}\overset{\mathbb{P}-\textrm{a.s.}}{\to} \hat{I}_{\hb, \infty}$, therefore by Lemma \ref{lemmagibbsdet} we conclude that $\mu_{N_k}\overset{\mathbb{P}-\textrm{a.s.}}{\rightharpoonup}\mu_{\infty}$ and this concludes the proof.
\end{proof}

\section{Proof of Theorem \ref{TheoremMain4}}\label{StrI}
The goal of this section is to give a proof of Theorem \ref{TheoremMain4}.

\smallskip

As a preliminary fact let us show that if $\beta<\hbc$ then $\Ic{\beta}\equiv \{0,1\}$, while if $\beta>\hbc$ then $\Ic{\beta}\not\equiv \{0,1\}$.

\smallskip

 To this aim let us consider the maximum of the difference between the Continuum Energy (\ref{EqEnCont}) and the entropy (\ref{eqent2}), $\uc{\beta}=\sC(\Ic{\beta})-E(\Ic{\beta})$, defined in (\ref{eqVarEExyz}). Then whenever $\uc{\beta}\leq -1$, we have that $-1=-E(\{0,1\})\leq \uc{\beta}\leq -1$ and this implies that $\Ic{\beta}\equiv\{0,1\}$ by uniqueness of the maximizer.
On the other hand, if $\uc{\beta}>-1$, then there exists $I\not\equiv\{0,1\}$ such that $\Uc{\beta}(I)>-1$ because $\Uc{\beta}(\{0,1\})=-1$, so that $\{0,1\}\subsetneq \Ic{\beta}$. In particular, since $\beta\mapsto \uc{\beta}$ is non-decreasing, we have that $\Ic{\beta}\equiv\{0,1\}$ if $\beta < \hat{\beta}_{\texttt{c}}$ and $\Ic{\beta}\not\equiv\{0,1\}$ if $\beta > \hat{\beta}_{\texttt{c}}$.\\

To prove the theorem we proceed in two steps: in the first one we show that a.s. for any $\varepsilon>0$ there exists $\beta_0=\beta_0(\varepsilon)>0$ random for which $\Ic{\beta}\subset [0,\varepsilon]\cup [1-\varepsilon,1]$ for all $\beta<\beta_0$. In the second one we show that if $\varepsilon$ is small enough, then the quantity of energy that we can gain is always too small to hope to compensate the entropy. To improve this strategy we use some results on the Poisson Point Process that we are going to recall.\\

Let us start to note that the process $(Y_i^{(\infty)},M_i^{(\infty)})_{i\in\mathbb{N}}\subset [0,1]\times \mathbb{R}_+$ is a realization of a Poisson Point Process $\Pi$ with intensity 
\begin{equation}\label{eqmu}
 \mu(dxdz)=\mathbf{1}_{[0,1]}(x)\frac{\alpha}{z^{1+\alpha}}\mathbf{1}_{[0,\infty)}(z)dxdz.
\end{equation}
In such a way, as proved in \cite{K93}, the process
\begin{equation}
 X_{t}=\displaystyle\sum_{(x,z)\in\Pi} z\ind\left(x\in [0,t]\cup [1-t,1] \right), \quad t\in \left[0,\frac{1}{2}\right]
\end{equation}
is an $\alpha$-stable subordinator. The behavior of an $\alpha$-stable subordinator in a neighborhood of $0$ is described by \cite[Thm 10 Ch. 3]{B96}, precisely if $(X_t)_t$ is such subordinator with $\alpha\in(0,1)$ and $h:\mathbb{R}_+\to\mathbb{R}_+$ is an increasing function, then $\limsup_{t\to 0^+}X_t/h(t)=\infty\textrm{ or }0$ a.s. depending on whether the integral $\int_0^1 h(t)^{-\alpha}dt$ diverges or converges. In particular by taking $q>1$ and $h(t)=t^{1/\alpha}\log^{q/\alpha}(1/t)$ in a neighborhood of $0$, we have the following result
\begin{proposition}\label{propLPP}
Let $(X_t)_t$ be an  $\alpha$-stable subordinator, with $\alpha\in (0,1)$, then for every $q>1$ a.s. there exists a random constant $C>0$ such that
 \begin{equation}
  X_t\leq C t^{\frac{1}{\alpha}}\log^{\frac{q}{\alpha}}\left(\frac{1}{t}\right)
 \end{equation}
 in a neighborhood of $0$.
\end{proposition}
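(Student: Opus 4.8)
\emph{Proof plan.} The plan is to apply directly the integral test for the lower functions of an $\alpha$-stable subordinator recalled just before the statement (Bertoin, Thm.~10, Ch.~III), with the explicit choice
\begin{equation*}
h(t)=t^{1/\alpha}\log^{q/\alpha}(1/t),
\end{equation*}
and to show that for $q>1$ the integral $\int_{0+}h(t)^{-\alpha}\,dt$ converges; by the quoted dichotomy this gives $\limsup_{t\to 0^+}X_t/h(t)=0$ a.s., which is more than enough to produce the asserted bound.

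First I would check that $h$ is a legitimate test function near the origin. Since $t^{1/\alpha}$ beats any power of $\log(1/t)$, we have $h(0^+)=0$, and computing
\begin{equation*}
\frac{d}{dt}\log h(t)=\frac{1}{\alpha t}\Bigl(1-\frac{q}{\log(1/t)}\Bigr)
\end{equation*}
shows that $h$ is strictly increasing on an interval $(0,t_0)$ with $\log(1/t_0)>q$. The formula for $h$ only makes sense (and is only increasing) near $0$, so I would extend it to an increasing function on all of $\mathbb{R}_+$ in any convenient way (e.g.\ affine for $t\ge t_0$); this is harmless because the criterion depends only on the behaviour of $h$ at $0$. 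The heart of the argument is then the change of variables $u=\log(1/t)$ (so that $dt/t=-du$), which yields
\begin{equation*}
\int_{0}^{t_0}h(t)^{-\alpha}\,dt=\int_{0}^{t_0}\frac{dt}{t\,\log^{q}(1/t)}=\int_{\log(1/t_0)}^{\infty}\frac{du}{u^{q}}<\infty,
\end{equation*}
where finiteness holds precisely because $q>1$. Feeding this into the dichotomy forces $\limsup_{t\to 0^+}X_t/h(t)=0$ almost surely.

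It then remains only to translate the vanishing $\limsup$ into the stated inequality: a.s.\ there is a random $\delta>0$ such that $X_t\le h(t)$ for all $t\in(0,\delta)$, so the proposition holds with $C=1$ (a larger random $C$ would absorb any discrepancy if one keeps the globally extended $h$). The whole argument is thus a one-line substitution once the cited subordinator theorem is available; the only points deserving a word of care are the verification of the monotonicity hypothesis on $h$ and the observation that the integral test is purely local at $0$, so I do not expect any genuine obstacle here.
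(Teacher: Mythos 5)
Your proposal is correct and follows exactly the paper's route: apply Bertoin's integral test with $h(t)=t^{1/\alpha}\log^{q/\alpha}(1/t)$, observe that $h(t)^{-\alpha}=t^{-1}\log^{-q}(1/t)$ gives a convergent integral near $0$ when $q>1$, and conclude $\limsup_{t\to 0^+}X_t/h(t)=0$ a.s. The extra care you take in verifying monotonicity of $h$ near $0$ is a detail the paper leaves implicit, but it is the same argument.
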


\begin{remark}
The process $X_t$ is the value of the sum of all charges in the set $[0,t]\cup [1-t,1]$. Therefore it gives an upper bound on the energy that we can gain by visiting this set.
\end{remark}
\subsubsection{Step One.}\label{prAI}
Let us show that a.s. for any $\varepsilon>0$ there exists $\beta_0=\beta_0(\varepsilon)>0$ for which $\Ic{\beta}\subset [0,\varepsilon]\cup [1-\varepsilon,1]$ for all $\beta<\beta_0$.
Otherwise there should exist $\varepsilon>0$ and a sequence $\beta_k>0$, $\beta_k\to 0$ as $k\to\infty$ such that $\Ic{\beta_k}\cap (\varepsilon, 1-\varepsilon)\neq \emptyset$. Let $x$ be one of such points, then, by Theorem \ref{ThmentropyCon1} we have that $E(\Ic{\beta_k})\geq E(\{0,x,1\})\geq  \varepsilon^{\gamma}+(1-\varepsilon)^{\gamma}$. Let $S=\sum_{i\in\mathbb{N}} M_i^{(\infty)}$, which is a.s. finite, cf. Remark \ref{rem:aboutMi}. Therefore by observing that $\uc{\beta_k}={\beta_k}\sC(\Ic{\beta_k})-E(\Ic{\beta_k})\geq -1$ we get
\begin{equation}
 \beta_k S\geq{{\beta_k}\sC(\Ic{\beta_k})}\geq {E(\Ic{\beta_k})-1}\geq {\varepsilon^{\gamma}+(1-\varepsilon)^{\gamma}-1}.
\end{equation}
There is a contradiction because the l.h.s. goes to $0$ as $\beta_k \to 0$, while the r.h.s. is a strictly positive number. 
\begin{remark}\label{RemBeta0}
 Let us note that if we set $\beta_0=\beta_0(\varepsilon)=(\varepsilon^{\gamma}+(1-\varepsilon)^{\gamma}-1)/S$ then for all $\beta<\beta_0$ it must be that $\Ic{\beta}\subset [0,\varepsilon]\cup [1-\varepsilon,1]$. Moreover 
 $\beta_0\downarrow 0$ as $\varepsilon\downarrow 0$.
\end{remark}
\subsubsection{Step Two.}
Now let us fix $\varepsilon>0$ small and $\beta_0=\beta_0(\varepsilon)\leq 1$ as in Remark \ref{RemBeta0}. 
Let
\begin{align}
 &\varepsilon_1=\sup \Ic{\beta}\cap [0,\varepsilon], \\
 &\varepsilon_2=\inf \Ic{\beta}\cap [1-\varepsilon,1].
\end{align}
Let $\hat{\varepsilon}=\max\{\varepsilon_1,1-\varepsilon_2\}$. If $\hat{\varepsilon}=0$ we have finished. Then we may assume that $\hat{\varepsilon}>0$ and we choose $q>1$, $C>0$ for which Proposition \ref{propLPP} holds for any $t<\varepsilon$, namely 
\begin{align}
{\beta}\sC(\Ic{\beta})\leq \beta_0 X_{\hat{\varepsilon}}\leq  C {\hat{\varepsilon}}^{\frac{1}{\alpha}}\log^{\frac{q}{\alpha}}\left(\frac{1}{\hat{\varepsilon}}\right),
\end{align}

By Theorem \ref{ThmentropyCon1} we get a lower bound for the entropy
\begin{equation}
 E(\Ic{\beta})\geq \hat{\varepsilon}^{\gamma}+(1-\hat{\varepsilon})^{\gamma}.
\end{equation}
In particular if $\varepsilon$ is small enough, we have that 
\begin{equation}
 E(\Ic{\beta})-1> \frac{\hat{\varepsilon}^{\gamma}}{2}.
\end{equation}

Therefore with a further restrictions on $\varepsilon$ and $\beta_0$, if necessary, by recalling that $\alpha,\gamma\in (0,1)$ we conclude that for all $\beta<\beta_0$
\begin{align}
E(\Ic{\beta})-1\leq {\beta}\sC(\Ic{\beta})\leq    C {\hat{\varepsilon}}^{\frac{1}{\alpha}}\log^{\frac{q}{\alpha}}\left(\frac{1}{\hat{\varepsilon}}\right)\leq \frac{\hat{\varepsilon}^{\gamma}}{2}< E(\Ic{\beta})-1,
\end{align}
which is a contradiction. Therefore $\hat{\varepsilon}$ must be $0$ and this implies that $\Ic{\beta}\equiv \{0,1\}$ for each $\beta<\beta_0$.

\section{The Directed polymer in random environment with heavy tails}\label{SecGamma}
Originally introduced by \cite{HH85}, the directed polymer in random environment is a model to describe an interaction between a polymer chain and a medium with microscopic impurities. From a mathematical point of view we consider the set of all possible paths of a $1+1$ - dimensional simple random walk starting from $0$ and constrained to come back to $0$ after $N$-steps. The impurities --- and so the medium-polymer interactions --- are idealized by an i.i.d. sequence $(\{\omega_{i,j}\}_{i\in\mathbb{N}, j\in\mathbb{Z}},\mathbb{P})$. Each random variable $\omega_{i,j}$ is placed on the point $(i,j)\in \mathbb{N}\times\mathbb{Z}$. 
%
For a given path $s$ we define the Gibbs measure
\begin{equation}
 \mu_{\beta,N}(s)=\frac{e^{\beta\sigma_N(s)}}{Q_{\beta,N}},
\end{equation}
where $\sigma_N(\cdot)=\sum_{i,j} \omega_{i,j} \ind(s_i=j)$ is the energy and $Q_{\beta,N}$ is a normalization constant.

In \cite{AL11} is studied the case in which the impurities have heavy tails, namely the distribution of $\omega_{1,1}$ is regularly varying with index $\alpha\in (0,2)$. 
In this case to have a non-trivial limit as $N\to\infty$, we have to choose $\beta=\beta_N\sim \hb N^{1-2/\alpha}L(N)$, with $L$ a slowly varying function, cf. \cite[(2.4),(2.5)]{AL11}. 
For such a choice of $\beta$, cf. \cite[Theorem 2.1]{AL11}, one has that the trajectories of the polymer are concentrated in the uniform topology around a favorable curve $\hat{\gamma}_{\beta_N,N}$. In \cite[Theorem 2.2]{AL11} one shows that there exists a limit in distribution for the sequence of curves $\hat{\gamma}_{\beta_N,N}$, denoted by $\hat{\gamma}_{\beta}$. Moreover there exists a random threshold $\beta_c$ below which such limit is trivial ($\hat{\gamma}_{\beta}\equiv 0$), cf. \cite[Proposition 2.5]{AL11}. 
Anyway a complete description of $\beta_c$ was not given, see Remark \ref{rem:AL11bcora}. In our work we solve this problem, cf. Theorem \ref{eq32al}. 
\medskip

The rest of the section is consecrated to prove Theorem \ref{eq32al}.

\begin{definition}[entropy]\label{defEntrA} 
Let us consider $\mathcal{L}^0=\{s:[0,1]\to \mathbb{R} : s \textrm{ is }1-\textrm{Lipschitz}, s(0)=s(1)=0\}$ equipped with $L^{\infty}$-norm, denoted by $\|\cdot\|_{\infty}$.

For a curve $\gamma\in \mathcal{L}^0$ we define its entropy as
\begin{equation}\label{entropy}
E(\gamma)=\int_0^1 e\left(\frac{d}{dx}\gamma(x)\right) dx,
\end{equation}
where $e(x)=\frac{1}{2}((1+x)\log(1+x)+(1-x)\log(1-x))$.
\end{definition}

Let us observe that $E(\cdot)$ is the rate function in the large deviations principle for the sequence of uniform measures on $\mathcal{L}_N^0$, the set of linearly interpolated $\frac{1}{N}$-scaled trajectories of a simple random walk.
\begin{definition}
We introduce the continuous environment $\pi_{\infty}$ as
\begin{equation}\label{eqContBETA}
\pi_{\infty}(\gamma)=\sum_i T_i^{-\frac{1}{\alpha}}\delta_{Z_i}\left(\textrm{graph}(\gamma)\right), \gamma\in \mathcal{L}^0.
\end{equation}
Here $\textrm{graph}(\gamma)=\{(x,\gamma(x)) : x\in [0,1])\}\subset \mathcal{D}:=\{(x,y)\subset\mathbb{R}^2: |y|\leq x\wedge (1-x)\}$ is the graph of $\gamma$,  $\alpha\in (0,2)$ is the parameter related to the disorder, $T_i$ is a sum of $i$-independent exponentials of mean $1$ and $(Z_i)_{i\in\mathbb{N}}$ is an i.i.d.-sequence of $\textrm{Uniform}(\mathcal{D})$ r.v.'s. These two sequences are assumed to be independent with joint law denoted by $\mathbb{P}_{\infty}$.
\end{definition}

For $\beta<\infty$ we introduce
\begin{equation}
 \hat{\gamma}_{\beta}=\displaystyle\argmax_{\gamma\in\mathcal{L}^0}\left\{\, \beta\pi_{\infty}(\gamma)-E(\gamma) \, \right\}
\end{equation}
and we set $u_{\beta}=\beta\pi_{\infty}(\hat{\gamma}_{\beta})-E(\hat{\gamma}_{\beta}) $. Since $\beta\pi_{\infty}(\gamma\equiv 0)-E(\gamma\equiv 0)=0$ a.s. we have that  $u_{\beta}\geq 0$ a.s., consequently we define the random threshold as
\begin{equation}\label{ubetacAA}
 \beta_c=\inf\{\beta>0 : u_{\beta}>0\}=\inf\{\beta>0 : \hat{\gamma}_{\beta}\not\equiv 0\}.
\end{equation}

\subsection{The Structure of $\beta_c$}
The random set $(Z_i,T_i^{-\frac{1}{\alpha}})_{i\in\mathbb{N}}\subset \mathcal{D}\times \mathbb{R}_+$ is a realization of a Poisson Point Process 
, denoted by $\Pi^*$, with density given by
\begin{equation}\label{eqmu}
 \mu^*(dxdydz)=\frac{\mathbf{1}_{\mathcal{D}}(x,y)}{|\mathcal{D}|}\frac{\alpha}{z^{1+\alpha}}\mathbf{1}_{[0,\infty)}(z)dxdydz.
\end{equation}
Let us introduce the process 
\begin{equation}\label{eq:UTRMPS}
 U_{t}=\displaystyle\sum_{(x,y,z)\in\Pi^*}z\ind\left( (x,y)\in A(t) \right), \quad t\in \left[0,\frac{1}{2}\right]
\end{equation}
with $A(t)=\{(x,y)\in\mathcal{D}: x\in [0,1], |y| \leq t\}$. 
Let us observe that the process $(U_t)_{t\in [0,\frac{1}{2}]}$  is "almost" a Lévy Process, 
in sense that it has càdlàg trajectories and independent but not homogeneous increments because the area of $A(t)$ does not grow linearly.
Anyway, by introducing a suitable function $\phi(t)>t$, we can replace $A(t)$ by $A(\phi(t))$  to obtain a process with homogeneous increment. In particular we take $\phi(t)=1/2(1-\sqrt{1-4t})$ in order to have that $\textrm{Leb}(A(\phi(t)))=t$ for all $t\in [0,1/4]$. Then the process
\begin{equation}\label{eq:UTRMPS1}
W_{t}=U_{\phi(t)}
\end{equation}
is a subordinator and $W_t\geq U_t$ for any $t\in [0,1/4]$. 

\medskip

Before giving the proof of Theorem \ref{eq32al}, we prove a general property of the model: 

\begin{proposition}\label{prop:bc1osti}
 For any fixed $\alpha\in (0,2)$, $\mathbb{P}_{\infty}$-a.s. for any $\varepsilon>0$ there exists $\beta_0=\beta_0(\varepsilon)>0$ such that $\|\hat{\gamma}_{\beta}\|_{\infty}<\varepsilon$  (that is, $\textrm{graph}(\hat{\gamma}_{\beta})\subset A({\varepsilon})$) for all $\beta<\beta_0$.
\end{proposition}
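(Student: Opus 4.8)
The plan is to follow ``Step One'' of Section~\ref{StrI} essentially verbatim, with the entropy $E$ of \eqref{entropy} in place of the pinning entropy, and the a.s.\ finiteness of the total mass of the environment $\pi_\infty$ in place of the a.s.\ finiteness of $\sum_i M_i^{(\infty)}$. I argue by contradiction: I first produce a strictly positive lower bound for $E(\gamma)$ that is uniform over all curves $\gamma\in\mathcal{L}^0$ with $\|\gamma\|_\infty\ge\varepsilon$; I then bound the energy that any such curve can collect by a $\beta$-independent random quantity which is $\mathbb{P}_\infty$-a.s.\ finite; and finally I let $\beta\to 0$.

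First I would establish the entropy estimate: there is $c(\varepsilon)>0$ with $E(\gamma)\ge c(\varepsilon)$ for every $\gamma\in\mathcal{L}^0$ such that $\|\gamma\|_\infty\ge\varepsilon$. Fix $x_0$ with $|\gamma(x_0)|\ge\varepsilon$; since $\gamma$ is $1$-Lipschitz with $\gamma(0)=\gamma(1)=0$ one has $x_0\wedge(1-x_0)\ge\varepsilon$, so splitting the integral in \eqref{entropy} at $x_0$ and applying Jensen's inequality to the convex function $e$ gives
\begin{equation*}
E(\gamma)\ \ge\ x_0\,e\!\Big(\tfrac{|\gamma(x_0)|}{x_0}\Big)+(1-x_0)\,e\!\Big(\tfrac{|\gamma(x_0)|}{1-x_0}\Big)\ \ge\ 2\,e(\varepsilon)=:c(\varepsilon)>0,
\end{equation*}
where the second inequality uses that $t\mapsto t\,e(a/t)$ is non-increasing on $[a,\infty)$ — a consequence of $e$ being convex with $e(0)=0$ — together with $x_0,1-x_0\le 1$. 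This plays the role of the strict superadditivity $a^\gamma+b^\gamma>(a+b)^\gamma$ used for the pinning entropy.

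Then I would bound the collectable energy and conclude. Every $\gamma\in\mathcal{L}^0$ has $\mathrm{graph}(\gamma)\subset\mathcal{D}=A(1/2)$, so each charge hit by $\gamma$ lies in $A(1/2)$ and hence, by \eqref{eq:UTRMPS} and \eqref{eq:UTRMPS1}, $\pi_\infty(\gamma)\le U_{1/2}=W_{1/4}$, with $W_{1/4}<\infty$ $\mathbb{P}_\infty$-a.s.\ since $W$ is a subordinator. Assume the proposition fails: there are $\varepsilon>0$ and $\beta_k\downarrow 0$ with $\|\hat\gamma_{\beta_k}\|_\infty\ge\varepsilon$ for all $k$. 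Since the null curve gives $\beta\pi_\infty(0)-E(0)=0$ we have $u_{\beta_k}\ge 0$, hence
\begin{equation*}
\beta_k\,W_{1/4}\ \ge\ \beta_k\,\pi_\infty(\hat\gamma_{\beta_k})\ \ge\ E(\hat\gamma_{\beta_k})\ \ge\ c(\varepsilon)>0,
\end{equation*}
which is impossible for $k$ large, the left-hand side tending to $0$. Moreover this shows one may take $\beta_0(\varepsilon)=c(\varepsilon)/W_{1/4}$, with $\beta_0(\varepsilon)\downarrow 0$ as $\varepsilon\downarrow 0$, exactly as in Remark~\ref{RemBeta0}.

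I expect the main obstacle to be the entropy estimate: one must verify that a curve with a macroscopic excursion really pays an amount of entropy bounded below uniformly, and in particular that the turning point $x_0$ cannot be pushed towards $0$ or $1$. This is precisely where the $1$-Lipschitz constraint $|\gamma(x_0)|\le x_0\wedge(1-x_0)$ enters — it forces $x_0,1-x_0\ge\varepsilon$, so also $|\gamma(x_0)|/x_0\le 1$, keeping $e$ inside its domain — and the monotonicity of $t\mapsto t\,e(a/t)$ then closes the estimate. The remaining ingredients (finiteness of $W_{1/4}$, the bound $u_\beta\ge 0$, and the monotonicity of $\beta\mapsto u_\beta$) are immediate from the set-up.
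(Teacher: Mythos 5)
Your entropy step is fine (and in fact slightly more self-contained than the paper's, which quotes \cite[Proposition 3.1]{AL11} as Proposition~\ref{ProentropyA} and then the two-sided bound on $E(\gamma_z)$). The gap is in the energy bound, and it is serious because the statement covers $\alpha\in(0,2)$.

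You bound $\pi_\infty(\gamma)\le U_{1/2}=W_{1/4}$, i.e.\ by the sum of \emph{all} charges in $\mathcal{D}$, and claim this is $\mathbb{P}_\infty$-a.s.\ finite ``since $W$ is a subordinator.'' That is only true for $\alpha<1$. The jump intensity of $W$ has Lévy measure proportional to $z^{-1-\alpha}\,dz$ on $(0,\infty)$, so $\int_0^1 z\,z^{-1-\alpha}\,dz<\infty$ exactly when $\alpha<1$; equivalently, the raw series $\sum_i T_i^{-1/\alpha}\sim\sum_i i^{-1/\alpha}$ converges only for $\alpha<1$. For $\alpha\in[1,2)$ the total mass $U_{1/2}$ is $+\infty$ a.s.\ and your bound $\beta_k W_{1/4}\ge E(\hat\gamma_{\beta_k})$ is vacuous. (The paper's use of $W_t$ and Proposition~\ref{propLPP} is confined to the proof of Theorem~\ref{eq32al}, which only concerns $\alpha<1/2$, where everything converges.)

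The paper's proof of Proposition~\ref{prop:bc1osti} avoids this precisely by invoking \cite[Proposition 4.1]{HM07}: $\mathbb{P}_\infty$-a.s.\ there is a random index set $A\subset\mathbb{N}$ with $S=\sum_{i\in A}T_i^{-1/\alpha}<\infty$ and $\pi_\infty(\gamma)\le S$ for \emph{every} $\gamma\in\mathcal{L}^0$. The point is that for $\alpha\ge1$ one must discard most of the small charges: the $1$-Lipschitz constraint implies a single curve can pick up only a sparse sub-collection of the point process, and the HM07 lemma produces one a.s.-finite dominating sum uniformly over curves. This is a genuinely different and stronger input than ``sum of all charges.'' To repair your proof you should replace $W_{1/4}$ by this $S$; the rest of your argument (entropy lower bound $c(\varepsilon)=2e(\varepsilon)$, the inequality $u_\beta\ge0$, and letting $\beta_k\downarrow0$) then goes through for all $\alpha\in(0,2)$, and you recover $\beta_0(\varepsilon)=c(\varepsilon)/S$.
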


Let us recall some preliminary results necessary for the proof.

\begin{proposition}\label{ProentropyA}
 Let $E$ be the entropy of Definition \ref{defEntrA}. Then for all $\gamma\in \mathcal{L}^0$ if $z=(x,y)\in \textrm{graph}(\hat{\gamma}_{\beta})$ we have that 
 \begin{equation}
  E(\gamma)\geq E(\gamma_{z}),
 \end{equation}
where $\gamma_z$ is the curve obtained by linear interpolation of $\{(0,0), z, (1,0)\}$.
\end{proposition}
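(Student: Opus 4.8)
The plan is to reduce the claim to a one–dimensional convexity property of the rate function $e$ of Definition \ref{defEntrA} and to apply it on two subintervals. Since the curve $\gamma$ in the statement is (in the application, and indeed in the only case where the inequality is non-vacuous) the maximiser $\hat{\gamma}_{\beta}$, a point $z=(x,y)\in\mathrm{graph}(\hat{\gamma}_{\beta})$ satisfies $\gamma(x)=y$, and the $1$-Lipschitz constraint together with $\gamma(0)=\gamma(1)=0$ forces $|y|\le x$ and $|y|\le 1-x$; this is exactly the condition $(x,y)\in\mathcal{D}$ that makes the comparison curve $\gamma_z$ — affine on $[0,x]$ with slope $y/x$ and affine on $[x,1]$ with slope $-y/(1-x)$ — an element of $\mathcal{L}^0$. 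If $x\in\{0,1\}$ then $y=0$, $\gamma_z\equiv 0$, $E(\gamma_z)=0\le E(\gamma)$, so I would henceforth assume $0<x<1$.

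First I would record the relevant properties of $e(t)=\tfrac12\big((1+t)\log(1+t)+(1-t)\log(1-t)\big)$ on $[-1,1]$: it is even, satisfies $e(0)=0$, is continuous (hence bounded) on $[-1,1]$, and is convex because $e''(t)=1/(1-t^2)>0$ on $(-1,1)$. I would also use that a $1$-Lipschitz curve $\gamma\in\mathcal{L}^0$ is absolutely continuous, so $\gamma'$ exists a.e., $|\gamma'|\le 1$ a.e., and $\int_a^b\gamma'(t)\,dt=\gamma(b)-\gamma(a)$ for all $0\le a\le b\le 1$; in particular $E(\gamma)$ is a well-defined finite integral.

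The core step is then to split $E(\gamma)=\int_0^x e(\gamma'(t))\,dt+\int_x^1 e(\gamma'(t))\,dt$ and apply Jensen's inequality to the convex function $e$ with respect to the normalised Lebesgue measures $\tfrac1x\,dt$ on $[0,x]$ and $\tfrac1{1-x}\,dt$ on $[x,1]$. Using $\int_0^x\gamma'=\gamma(x)-\gamma(0)=y$ and $\int_x^1\gamma'=\gamma(1)-\gamma(x)=-y$, this yields $E(\gamma)\ge x\,e(y/x)+(1-x)\,e\big(-y/(1-x)\big)$, which by evenness of $e$ equals $x\,e(y/x)+(1-x)\,e\big(y/(1-x)\big)=E(\gamma_z)$, all terms being well defined since the slopes $y/x$ and $-y/(1-x)$ lie in $[-1,1]$ by the Lipschitz bound. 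This is the desired inequality; if one additionally wanted strictness whenever $\gamma\ne\gamma_z$, it would follow from the strict convexity of $e$ and the equality case in Jensen, but the proposition as stated only needs the non-strict form.

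I do not expect a genuine obstacle here: the argument is a direct Jensen estimate, and the only care needed is in the analytic bookkeeping — the absolute continuity of $1$-Lipschitz curves (so that Jensen and the fundamental theorem of calculus apply), checking that the slopes $\pm y/\cdot$ stay in the domain $[-1,1]$ of $e$ via the Lipschitz bound, and disposing of the trivial endpoint cases $x\in\{0,1\}$.
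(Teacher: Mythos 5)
Your argument is correct: the splitting of $E(\gamma)$ at $x$ and the application of Jensen's inequality to the convex, even rate function $e$ on each subinterval gives exactly $E(\gamma)\ge x\,e(y/x)+(1-x)\,e\bigl(y/(1-x)\bigr)=E(\gamma_z)$, and your bookkeeping (absolute continuity of Lipschitz curves, slopes staying in $[-1,1]$, the trivial cases $x\in\{0,1\}$) is all that is needed. The paper itself offers no proof here — it simply cites \cite[Proposition 3.1]{AL11} — and the cited proof is this same convexity/Jensen reduction, so your write-up is a correct, self-contained version of the standard argument.
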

\begin{proof}
 \cite[Proposition 3.1]{AL11}
\end{proof}

As shown in \cite[Proof of Proposition 2.5]{AL11}, there exist two constants $C_1,C_2>0$  such that for all $z=(x,y)\in \mathcal{D}$ we have $C_1\left(\frac{y^2}{x}+\frac{y^2}{1-x}\right)\leq E(\gamma_z)\leq C_2\left(\frac{y^2}{x}+\frac{y^2}{1-x}\right)$. This implies that there exists $C_0>0$ for which
\begin{equation}\label{eqSquare}
 E(\gamma_z)\geq C_0 y^2,
\end{equation}
uniformly on $z\in\mathcal{D}$.

\begin{proof}[Proof of Proposition \ref{prop:bc1osti}] By contradiction let us suppose that there exists $\varepsilon>0$ such that for a sequence $\beta_k\rightarrow 0$ as $k\to\infty$ we have $\|\hat{\gamma}_{\beta_k}\|_{\infty}\geq\varepsilon$. By continuity of $\hat{\gamma}_{\beta_k}$ there exists a point $x\in [\varepsilon,1-\varepsilon]$ such that $\hat{\gamma}_{\beta_k}(x)=\varepsilon$.
By \cite[Proposition 4.1]{HM07}, with probability $1$ there exists a random set $A\subset \mathbb{N}$ such that $S=\sum_{i\in A}T_i^{-\frac{1}{\alpha}}<\infty$ and for any $\gamma\in \mathcal{L}^0$ it holds that $S\geq \pi_{\infty}(\gamma)$. For instance if $\alpha\in (0,1)$, then we can choose $A\equiv \mathbb{N}$, while if $\alpha>1$, then $A\subsetneq\mathbb{N}$. Since $u_{\beta_k}=\beta_k \pi_{\infty}(\hat{\gamma}_{\beta_k})-E(\hat{\gamma}_{\beta_k})\geq 0$ we obtain that a.s.
\begin{equation}
 \beta_k S\geq \beta_k \pi_{\infty}(\hat{\gamma}_{\beta_k})\geq {E(\hat{\gamma}_{\beta_k})}\geq {E({\gamma}_{z=(x,\varepsilon)})} \geq  {C_0\varepsilon^2}.
\end{equation}
Sending $\beta_k\to 0$ we obtain a contradiction because the l.h.s. converges to $0$.
\end{proof}

We are now ready to prove Theorem \ref{eq32al}.

\begin{proof}[Proof of Theorem \ref{eq32al}] We have to prove only the point $(1)$, the other one has been already proven in \cite{AL11}.
Let $\varepsilon>0$ be fixed and $\beta_0=\beta_0(\varepsilon)$ such that $\|\hat{\gamma}_{\beta}\|_{\infty}<\varepsilon$ for all $\beta<\beta_0\leq 1$.
Moreover we define $\hat{\varepsilon}:=\max|\hat{\gamma}_{\beta}(x)|$. 

\smallskip

An upper bound for the energy gained by $\hat{\gamma}_{\beta}$ is given by $\sum_{i\in\mathbb{N}}T_i^{-\frac{1}{\alpha}}\textbf{1}_{(Z_i\in A_{\hat{\varepsilon}})}$, the sum of all charges contained in the region $A_{\hat{\varepsilon}}$. Such quantity is estimated by the process $W_{\hat{\varepsilon}}$, cf. \eqref{eq:UTRMPS1}. Therefore by Proposition \ref{propLPP} we can choose suitable constants $q>1$ and $C>0$ such that

\begin{equation}
 \pi_{\infty}(\hat{\gamma}_{\beta})\leq  \displaystyle\sum_{i\in\mathbb{N}}T_i^{-\frac{1}{\alpha}}\textbf{1}_{(Z_i\in A_{\hat{\varepsilon}})}\leq U_{\hat{\varepsilon}} \leq C \hat{\varepsilon}^{\frac{1}{\alpha}}\log^{\frac{q}{\alpha}}\left(\frac{1}{\hat{\varepsilon}}\right).
\end{equation}

A lower bound for the entropy is provided by \eqref{eqSquare}:
\begin{equation}
 E(\hat{\gamma_{\beta}})\geq C_0 \hat{\varepsilon}^2.
\end{equation}
Conclusion: if $\varepsilon$ is small enough we get
\begin{equation}
 \beta \pi_{\infty}(\hat{\gamma}_{\beta})\leq C \hat{\varepsilon}^{\frac{1}{\alpha}}\log^{\frac{q}{\alpha}}\left(\frac{1}{\hat{\varepsilon}}\right) \leq C_0 \hat{\varepsilon}^2\leq  E(\hat{\gamma_{\beta}}),
\end{equation}
because $\alpha <\frac{1}{2}$ and this forces $u_{\beta}=0$ for all $\beta<\beta_0$.
\end{proof}

\section{Possible Generalizations and Perspective}\label{SecPersp}

This work represents the first analysis of such model with this particular choice of the disorder and renewal process. There are several open questions regarding mainly, but not only, the comprehension of the model with different choices of renewal process: 

\begin{itemize}

\item ($\gamma\geq 1$) The condition $\gamma\geq 1$ implies that the entropy function $E(I)$, cf. \eqref{eqResc}, is non-increasing (strictly non-increasing if $\gamma>1$) with respect to the inclusion of sets in $\Xdi$, cf. \eqref{eqDensity1}. It turns out that for any fixed $\beta>0$ and $N\in\mathbb{N}$, the solution of \eqref{eqIntr5} is $\Idp{\beta}=\{0,1/N,\cdots,1\}$. Therefore, whenever $N\to \infty$, the limit set is given by the interval $[0,1]$, independently of our choice of $\beta_N$. We conjecture that $\tau/N$ converges to the whole segment $[0,1]$.

\smallskip
  
\item ($\gamma=0$) The case $\gamma=0$ corresponds to consider a renewal process with polynomial tail, that is $K(n):=\mathrm{P}(\tau_1=n)\sim L(n)n^{-\rho}$, with $\rho>1$, cf. \eqref{eqIntr6}. 
In this case we conjecture that the correct rescaling is given by $\beta=\beta_N\sim N^{-1/\alpha}\log N$ and the limit measure for the the sequence $\mathrm{P}_{{\beta}_N,h,N}^{{\omega}}(\cdot)$ is given by a more complicated structure than the $\delta$-measure of a single set. This would mean that we do not have concentration around a single favorable set.

\smallskip

\item An interesting open problem is given by the structure of $\Ic{\hb}$, cf. \eqref{eqVarEE} and Theorem \ref{TheoremMain1}. In Theorem \ref{TheoremMain4} we have proven that if $\hb$ is small enough, then $\Ic{\hb}\equiv \{0,1\}$ a.s., otherwise, if $\hb$ is large, $\{0,1\}\subsetneq \Ic{\hb}$. We conjecture that for any finite $\hb>0$ it is given by a finite number of points.


\end{itemize}

\appendix
\section{Asymptotic Behavior for Terminating Renewal Processes}\label{Aa1}
In this section we consider a terminating renewal process $(\tau,\mathrm{P})$ and $K(n)=\mathrm{P}(\tau_1=n)$, with $K(\infty)>0$. The aim is to study the asymptotic behavior of the renewal function $u(N)=\mathrm{P}(N\in\tau)=\sum_m K^{*(m)}(N)$, where $K^{*(m)}$ is the $m^{th}$-convolution of $K$ with itself, under the assumption that $K(\cdot)$ is subexponential. We refer to \cite{FKZ09} for the general theory of the subexponential distributions
.


\begin{definition}[Subexponential distribution]\label{DefSubExp}
We say that a discrete probability density $q$ on $\mathbb{N}$ is subexponential if
 \begin{equation}
 \forall\, k>0,\, \lim_{n\to\infty}q(n+k)/q(n)=1 \quad \text{and}\quad \lim_{n\to\infty}q^{*(2)}(n)/q(n)=2,
 \end{equation}
\end{definition}
The result we are interested in is the following 
\begin{theorem}\label{ThmSD}
 Let $K(\cdot)$ be a discrete probability density on $\mathbb{N}\cup\{\infty\}$ such that $K(\infty)>0$ and let $\delta=1-K(\infty)<1$. Let $q(\cdot)$ be defined as $q(n)=\delta^{-1}K(n)$. If $q$ is subexponential, then 
 \begin{equation}
  \displaystyle\lim_{n\to\infty} \frac{u(n)}{K(n)}=\frac{1}{K(\infty)^2}.
 \end{equation}
\end{theorem}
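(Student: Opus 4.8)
The plan is to combine the geometric structure of a terminating renewal process with two classical facts about subexponential densities. Writing $K(n)=\delta\,q(n)$ for $n\in\mathbb{N}$, with $q$ a genuine probability density on $\mathbb{N}$, and using that reaching a finite time $n$ forces every intermediate renewal jump to be finite, one gets $K^{*(m)}(n)=\delta^{m}q^{*(m)}(n)$ for all $n\in\mathbb{N}$. Discarding the $m=0$ term (which equals $\ind_{\{n=0\}}$ and so vanishes for $n\geq1$),
\begin{equation}
u(n)=\sum_{m\geq0}K^{*(m)}(n)=\sum_{m\geq1}\delta^{m}q^{*(m)}(n),\qquad n\geq1,
\end{equation}
hence $u(n)/q(n)=\sum_{m\geq1}\delta^{m}\,q^{*(m)}(n)/q(n)$.

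I would then invoke two standard ingredients, both found in \cite{FKZ09}. First, subexponentiality of $q$ gives $\lim_{n\to\infty}q^{*(m)}(n)/q(n)=m$ for each fixed $m$, by an easy induction on $m$ using the long‑tailedness $q(n+k)/q(n)\to1$. Second, Kesten's bound: for every $\varepsilon>0$ there is $C(\varepsilon)<\infty$ such that $q^{*(m)}(n)\leq C(\varepsilon)(1+\varepsilon)^{m}q(n)$ for all $m\geq1$ and all $n$.

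The limit can then be taken by dominated convergence: choose $\varepsilon>0$ with $\delta(1+\varepsilon)<1$ (possible since $\delta<1$), so the general term of $\sum_{m\geq1}\delta^{m}q^{*(m)}(n)/q(n)$ is bounded uniformly in $n$ by the summable sequence $C(\varepsilon)\bigl(\delta(1+\varepsilon)\bigr)^{m}$, and therefore
\begin{equation}
\lim_{n\to\infty}\frac{u(n)}{q(n)}=\sum_{m\geq1}m\,\delta^{m}=\frac{\delta}{(1-\delta)^{2}}.
\end{equation}
Since $q(n)=K(n)/\delta$ and $1-\delta=K(\infty)$, this gives
\begin{equation}
\lim_{n\to\infty}\frac{u(n)}{K(n)}=\frac{1}{\delta}\cdot\frac{\delta}{(1-\delta)^{2}}=\frac{1}{K(\infty)^{2}},
\end{equation}
which is the claim.

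The only genuinely delicate point is the uniform domination, i.e.\ Kesten's bound: it is exactly what legitimizes the exchange of the limit and the infinite sum, and it is the single step I would not reprove from scratch. If a self-contained argument is wanted, it follows from the usual induction showing that $A_{m}:=\sup_{n}q^{*(m)}(n)/q(n)$ grows at most geometrically in $m$, using $\sum_{n}q(n)=1$ together with the local regularity $q(n+k)/q(n)\to1$ to handle the small-$n$ contributions. Everything else in the argument is routine bookkeeping.
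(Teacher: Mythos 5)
Your proposal is correct and follows essentially the same route as the paper: expand $u(n)=\sum_{m\geq1}\delta^m q^{*(m)}(n)$, use the subexponential asymptotics $q^{*(m)}(n)\sim m\,q(n)$ together with Kesten's uniform bound $q^{*(m)}(n)\leq C(\varepsilon)(1+\varepsilon)^m q(n)$ to justify dominated convergence, and sum the geometric-type series to get $\delta/(1-\delta)^2$. One small caveat on your side remark: the limit $q^{*(m)}(n)/q(n)\to m$ is \emph{not} a consequence of long-tailedness alone (there exist long-tailed non-subexponential densities); the induction genuinely needs the full subexponential hypothesis $q^{*(2)}(n)/q(n)\to 2$, which is precisely what the cited \cite[Corollary 4.13]{FKZ09} uses.
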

Its proof is a simple consequence of the Dominated Convergence Theorem by using the following results
\begin{lemma}\label{Cor1}
 Let $q$ be a subexponential discrete probability density on $\mathbb{N}$, then for any $m\geq 1$
 \begin{equation}
  q^{*(m)}(n)\overset{n\to\infty}{\sim} m q(n).
 \end{equation}
\end{lemma}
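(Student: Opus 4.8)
The plan is to argue by induction on $m$. The case $m=1$ is the identity $q^{*(1)}=q$, and $m=2$ is exactly the second defining relation of subexponentiality, $q^{*(2)}(n)/q(n)\to 2$. For the inductive step, assume $q^{*(m)}(n)\sim m\,q(n)$ and write $q^{*(m+1)}=q^{*(m)}*q$, so
\[
q^{*(m+1)}(n)=\sum_{k}q^{*(m)}(k)\,q(n-k).
\]
Fix a large cut-off $A$ and split this sum over the three ranges $k\le A$, $A<k<n-A$, and $k\ge n-A$ (for $n>2A$ the first and third are disjoint). Two elementary inputs handle the outer ranges: the long-tailed property $q(n-k)/q(n)\to 1$ for each fixed $k$, and the induction hypothesis $q^{*(m)}(n-j)/q(n)\to m$ for each fixed $j$.

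For the lower bound I retain only the outer ranges and divide by $q(n)$, obtaining for each fixed $A$
\[
\liminf_{n\to\infty}\frac{q^{*(m+1)}(n)}{q(n)}\ \ge\ \sum_{k\le A}q^{*(m)}(k)\ +\ m\sum_{j\le A}q(j),
\]
whose right-hand side tends to $1+m=m+1$ as $A\to\infty$, since $q$ and $q^{*(m)}$ are probability densities. The same two ranges, treated from above, contribute exactly $1+m$ in the limit $A\to\infty$, so the matching upper bound reduces to showing that the middle range is negligible:
\[
\lim_{A\to\infty}\ \limsup_{n\to\infty}\ \frac{1}{q(n)}\sum_{A<k<n-A}q^{*(m)}(k)\,q(n-k)=0.
\]

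For this I would first record the discrete Kesten bound: for every $\varepsilon>0$ there is $C_\varepsilon<\infty$ with $q^{*(j)}(n)\le C_\varepsilon(1+\varepsilon)^{j}\,q(n)$ for all $n$ and all $j\ge1$, itself proved by a short induction on $j$ starting from $q^{*(2)}(n)\sim2q(n)$ (alternatively quoted from \cite{FKZ09}). Given this, the middle sum is at most $C_\varepsilon(1+\varepsilon)^{m}\sum_{A<k<n-A}q(k)\,q(n-k)$, and the displayed limit follows from the elementary fact that, for a long-tailed density satisfying $q^{*(2)}(n)\sim2q(n)$,
\[
\lim_{n\to\infty}\frac{1}{q(n)}\sum_{A<k<n-A}q(k)\,q(n-k)=2-2\sum_{k\le A}q(k),
\]
which tends to $0$ as $A\to\infty$ (obtained by writing $q^{*(2)}(n)$ as twice the two outer ranges plus the middle one). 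Combining the lower and upper bounds gives $q^{*(m+1)}(n)\sim(m+1)q(n)$ and closes the induction.

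I expect the only genuine obstacle to be this uniform control of the truncated self-convolution, equivalently the Kesten-type bound: it forces one to upgrade the pointwise asymptotics $q^{*(2)}(n)\sim2q(n)$ to an estimate uniform in $n$, and this is the single point where subexponentiality is used beyond plain long-tailedness. Everything else — the three-range splitting and the passage $A\to\infty$ — is routine, and if a shorter argument is preferred both the Kesten bound and the middle-range estimate can be taken directly from \cite{FKZ09}.
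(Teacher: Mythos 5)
Your argument is correct. It is worth noting that the paper does not prove this lemma at all: the paper's entire ``proof'' is the single citation \cite[Corollary 4.13]{FKZ09}, so there is no argument in the paper against which to compare. What you have written out is essentially the standard proof that underlies that reference --- induction on $m$ with the base case $q^{*(2)}\sim 2q$, a three-range split of the convolution $\sum_k q^{*(m)}(k)q(n-k)$ at a cutoff $A$, the long-tailed property and the inductive hypothesis handling the two outer ranges, and the Kesten uniform bound (the paper's Theorem \ref{DomThm}, itself a citation to \cite[Theorem 4.14]{FKZ09}) controlling the middle range, whose contribution is shown to vanish as $A\to\infty$ via the identity $\lim_{n}\, q(n)^{-1}\sum_{A<k<n-A}q(k)q(n-k) = 2 - 2\sum_{k\le A}q(k)$. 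The steps are all sound: the lower bound is immediate from dropping the middle range and letting $A\to\infty$, and the upper bound follows once the middle range is killed uniformly in $n$. One minor point of hygiene, which you implicitly avoid: the paper's statement of the Kesten bound only holds for $n>N_0(\varepsilon)$, but in the middle range one has $k>A$, so choosing $A\ge N_0(\varepsilon)$ suffices and no extension of the bound to small arguments is actually needed. You correctly identify the Kesten bound as the one non-elementary ingredient beyond long-tailedness, and you note that both it and the middle-range estimate can be quoted from \cite{FKZ09}; doing so collapses your proof to exactly what the paper does. So you have supplied a correct, self-contained proof of a result the paper simply outsources.
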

\begin{proof}
 \cite[Corollary 4.13]{FKZ09}.
\end{proof}
\begin{theorem}\label{DomThm}
Let $q$ be a subexponential discrete probability density on $\mathbb{N}$. Then we have that
 for any $\varepsilon>0$ there exist $N_0=N_0(\varepsilon)$ and $c=c(\varepsilon)$ such that for any $n>N_0$ and
 $m\geq 1$
 \begin{equation}
  q^{*(m)}(n)\leq c (1+\varepsilon)^m q(n).
 \end{equation}
\end{theorem}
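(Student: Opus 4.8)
The plan is to prove the bound by induction on $m$, following the classical Kesten argument in the form adapted to discrete densities. Recall that $q$ being subexponential (Definition~\ref{DefSubExp}) means both that $q$ is long‑tailed, $q(n+k)/q(n)\to 1$ for each fixed $k$, and that $q^{*(2)}(n)/q(n)\to 2$; both properties will be used. Since long‑tailedness forces $q(n)>0$ for all large $n$, I would fix $\varepsilon>0$, then fix $N=N(\varepsilon)$ with $\sum_{j=1}^{N}q(j)\ge 1-\varepsilon$, and then fix $N_0=N_0(\varepsilon)>2N$ large enough that $q>0$ on $(N_0,\infty)$ and that for all $n>N_0$ one has (i) $q^{*(2)}(n)\le(2+\varepsilon)q(n)$ and (ii) $(1-\varepsilon)q(n)\le q(n-j)\le(1+\varepsilon)q(n)$ uniformly over $1\le j\le N_0$ (finitely many shifts, so long‑tailedness is uniform in $j$). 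Set $a_m:=\sup_{n>N_0}q^{*(m)}(n)/q(n)$; each $a_m$ is finite, since on the finite initial range of $n>N_0$ the ratio is bounded (as $q>0$ there) while $q^{*(m)}(n)/q(n)\to m$ as $n\to\infty$ by Lemma~\ref{Cor1}, and $a_1=1$. The goal is $a_m\le c(\varepsilon)(1+\varepsilon)^m$.

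The core is the recursion $a_{m+1}\le c_0(\varepsilon)+(1+6\varepsilon)\,a_m$. For $n>N_0$ large enough that $n-N>N_0$, split
\[
q^{*(m+1)}(n)=\sum_{j=1}^{n-1}q^{*(m)}(j)\,q(n-j)=S_1+S_2+S_3,
\]
with $S_1$ the sum over $1\le j\le N_0$, $S_3$ over $n-N\le j\le n-1$, and $S_2$ over the genuine middle $N_0<j<n-N$. In $S_1$ use $q^{*(m)}(j)\le 1$ and $q(n-j)\le(1+\varepsilon)q(n)$, so $S_1\le(1+\varepsilon)q(n)$. In $S_3$, reindex $i=n-j\in\{1,\dots,N\}$ and use $q^{*(m)}(n-i)\le a_m q(n-i)\le a_m(1+\varepsilon)q(n)$, so $S_3\le(1+\varepsilon)\,a_m q(n)$. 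In $S_2$ use $q^{*(m)}(j)\le a_m q(j)$ (legitimate since $j>N_0$); the essential observation is that the surviving sum is only of order $\varepsilon q(n)$, because by the symmetry of $q(j)q(n-j)$ and $n>2N$,
\[
\sum_{N<j<n-N}q(j)q(n-j)=q^{*(2)}(n)-2\sum_{j=1}^{N}q(j)q(n-j)\le(2+\varepsilon)q(n)-2(1-\varepsilon)^2q(n)\le 5\varepsilon\,q(n),
\]
using $\sum_{j=1}^{N}q(j)q(n-j)\ge(1-\varepsilon)q(n)\sum_{j=1}^{N}q(j)\ge(1-\varepsilon)^2q(n)$; hence $S_2\le 5\varepsilon\,a_m q(n)$. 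Summing, $q^{*(m+1)}(n)\le\bigl[(2+2\varepsilon)+(1+6\varepsilon)a_m\bigr]q(n)$ for all large $n>N_0$, and the finitely many remaining $n>N_0$ are absorbed into the constant since there $q^{*(m+1)}(n)/q(n)\le 1/q(n)$ is bounded uniformly in $m$. This gives $a_{m+1}\le c_0+(1+6\varepsilon)a_m$, whence $a_m\le\bigl(1+c_0/(6\varepsilon)\bigr)(1+6\varepsilon)^m$ by iteration from $a_1=1$; replacing $\varepsilon$ by $\varepsilon/6$ throughout produces $q^{*(m)}(n)\le c(\varepsilon)(1+\varepsilon)^m q(n)$ for all $n>N_0(\varepsilon)$, which is the claim.

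The only genuinely delicate point is obtaining the sharp exponential rate $(1+\varepsilon)^m$ rather than $2^m$: the lazy bound $q^{*(m)}(j)\le a_m q(j)$ for all $j$ yields only $q^{*(m+1)}=q^{*(m)}*q\le a_m q^{*(2)}\le(2+\varepsilon)a_m q$, i.e.\ the rate $2^m$, which is insufficient for the intended application in Theorem~\ref{ThmSD}, where one sums $\sum_m\delta^{m-1}q^{*(m)}(n)/q(n)$ with $\delta=1-K(\infty)\in(0,1)$ arbitrary and therefore needs $\delta(1+\varepsilon)<1$. The gain from $2$ to $1+\varepsilon$ is precisely the "edge subtraction" inside $S_2$: subexponentiality is exactly the statement that, for large $n$, almost all of the mass of $q^{*(2)}(n)$ lives in the two "single big jump" regions $j\le N$ and $j\ge n-N$, so the true middle contributes only $O(\varepsilon)q(n)$. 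This is the discrete‑density counterpart of the estimate $\overline{F^{*2}}(x)-\overline{F}(x)\le(1+\varepsilon)\overline{F}(x)$ that drives the distributional version of Kesten's lemma.
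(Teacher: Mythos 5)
The paper itself does not argue this statement: its ``proof'' is a one-line citation of \cite[Theorem 4.14]{FKZ09}. So your self-contained reconstruction of Kesten's lemma for discrete subexponential densities is a genuinely different (and more explicit) route. The architecture is the right one: fix a core $N$ with $\sum_{j\le N}q(j)\ge 1-\varepsilon$, set $a_m=\sup_{n>N_0}q^{*(m)}(n)/q(n)$, split the convolution into two edge ranges and a middle range, and use $q^{*(2)}(n)/q(n)\to 2$ to show the middle contributes only $O(\varepsilon)q(n)$. Your treatments of $S_2$ and $S_3$, which jointly produce the essential multiplicative factor $1+6\varepsilon$, are correct, as is the iteration and the final rescaling $\varepsilon\mapsto\varepsilon/6$.

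There is, however, a circularity in your condition (ii). You ask that $(1-\varepsilon)q(n)\le q(n-j)\le(1+\varepsilon)q(n)$ hold for all $n>N_0$ \emph{and all} $1\le j\le N_0$, but the range of shifts is the very threshold you are in the process of choosing. Taking $n=N_0+1$, $j=N_0$ would force $q(1)\le(1+\varepsilon)q(N_0+1)$, which fails for all large $N_0$ since $q(n)\to 0$. The parenthetical ``finitely many shifts, so long-tailedness is uniform in $j$'' is exactly where this hides: long-tailedness gives uniformity over any family of shifts fixed \emph{before} the threshold, not over a family whose size equals the threshold. This does not sink the argument, because $S_1$ only feeds the additive constant $c_0$ in the recursion $a_{m+1}\le c_0+(1+6\varepsilon)a_m$, so it never needed the sharp bound $(1+\varepsilon)q(n)$; any bound $C(\varepsilon)q(n)$ suffices. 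The repair: keep (ii) only for $1\le j\le N$ (legitimate, $N$ being fixed first), which is all that $S_2$ and $S_3$ use; then, since $N_0$ is now a fixed finite number, choose a second threshold $N_2>N_0+N$ so that $\sum_{j\le N_0}q(n-j)\le(N_0+1)\,q(n)$ for $n>N_2$, giving $S_1\le (N_0+1)q(n)$ there; derive the recursion for $n>N_2$, and absorb the finitely many $n\in(N_0,N_2]$ into $c_0$ via $q^{*(m+1)}(n)/q(n)\le 1/q(n)$, exactly as you already absorb $n\in(N_0,N_0+N]$.
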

\begin{proof}
 \cite[Theorem 4.14]{FKZ09}.
\end{proof}
\subsection{The case of $K(n)\cong  e^{-\mathtt{c}n^{\gamma}}$.} \label{A1.2}
In this section we want to show that \eqref{eqIntr6} satisfies Assumption \ref{ASSrenP}. The fact that it is stretched-exponential, \eqref{A2}, is obvious, then 
it is left to prove that it is subexponential, \eqref{A1}.
%
%

\smallskip

By \cite[Theorem 4.11]{FKZ09}, we can assume $K(n)=  n^{\rho} \tilde{L}(n) e^{-\mathtt{c}n^{\gamma}}$, where $\tilde{L}$ is another slowly varying function such that $\tilde{L}(n)\sim L(n)$ as $n\to\infty$.
Since $\gamma\in (0,1)$ 
we get that
 for any fixed $k>0$, $\lim_{n\to\infty} {K(n+k)}/{K(n)}=1$. Such property goes under the name of long-tailed and it allows to apply \cite[Theorem 4.7]{FKZ09}: to prove that $K$ is subexponential, we have to prove that for any choice of $h=h(n)\to \infty$ as $n\to \infty$, with $h(n)<n/2$, we have that $\sum_{m=h(n)}^{n-h(n)}K(n-m)K(m)=o(K(n)),$ as $n\to \infty$.
Let us consider $R(y)=y^{\gamma}$, with $\gamma\in (0,1)$. $R$ is a concave increasing function and $R'(y)=\gamma y^{\gamma-1}$ is strictly decreasing, so that given two integer points $n,m$ such that $n-m>m$ we have
\begin{align}
 R(n) - R(n-m)\leq m R'(n-m)\leq mR'(m)=\gamma m^{\gamma}=\gamma R(m),
\end{align}
By Karamata's representation for slowly varying functions \cite[Theorem 1.2.1]{BGT89} there exists $c_1\geq 1$ for which $\tilde{L}(xr)\leq c_1\tilde{L}(r)$ for any $x\in [\frac{1}{2},1]$ and $r\geq 1$. This implies also that for any $\rho\in\mathbb{R}$ there exists $c=c(\rho)$ such that $(xr)^{\rho}\tilde{L}(xr)\leq cr^{\rho}\tilde{L}(r)$ for any $x\in [\frac{1}{2},1]$ and $r\geq 1$.
Therefore in our case, whenever $n-m\geq n/2$ we have that $K(n-m)\leq n^{\rho}\tilde{L}(n)e^{-\mathtt{c}(n-m)^{\gamma}}=K(n)e^{R(n)-R(n-m)}$. 
Summarizing, by using all these observations we conclude that 
\begin{equation}
\sum_{m=h(n)}^{\frac{n}{2}}\frac{K(n-m)K(m)}{K(n)}\leq c\sum_{m=h(n)}^{\infty} m^{\rho}\tilde{L}(m)e^{-\mathtt{c}(1-\gamma)R(m)},
\end{equation} 
which goes to $0$ as $h(n)\to \infty$ and the proof follows by observing that
\begin{equation}
\sum_{m=h(n)}^{n-h(n)}\frac{K(n-m)K(m)}{K(n)}=2\sum_{m=h(n)}^{\frac{n}{2}}\frac{K(n-m)K(m)}{K(n)}.
\end{equation}
 $\hfill \Box$

\section*{Acknowledgments}
I wish to thank Francesco Caravenna and Fabio Lucio Toninelli 
for their constant support to develop this work.
%
%


\bibliographystyle{amsplain}
\bibliography{biblioT}

\end{document}